%% file: iclr2027_conference.tex
\documentclass{article} 
\usepackage{iclr2027_conference,times}

\input{math_commands.tex}

\usepackage{hyperref}
\usepackage{url}

\usepackage[nolist]{acronym}
\input{acronyms}
\usepackage{graphicx}
\usepackage{subcaption}
\usepackage{enumitem}

\usepackage{booktabs,tabularx,array}
\newcolumntype{Y}{>{\raggedright\arraybackslash}X}

\usepackage{amsmath, amssymb, amsthm}
\usepackage{thmtools}
\usepackage{thm-restate}
\usepackage{algorithm,algorithmic}
\usepackage{multirow}

\newtheorem{lemma}{Lemma}

\newtheorem{remark}{Remark}
\newtheorem{corollary}{Corollary}

\title{Annealed Sinkhorn with Momentum: Certified Unregularized Optimal Transport in Linear Memory}

\author{Samuel J. K. Chin \\
MIT\\
\texttt{jkschin@mit.edu}
\And
Maximilian Schiffer\\
HEC Paris \\
\texttt{schiffer@hec.fr}
}

\iclrfinalcopy 
\begin{document}

\maketitle

\begin{abstract}
We characterize Bregman Douglas--Rachford splitting (BDRS) for unregularized discrete optimal transport and develop an anytime primal--dual certificate in linear memory. 
We first establish that BDRS coincides with warm-started Inexact Proximal point method for exact Optimal Transport (IPOT) using a single inner Sinkhorn iteration. 
By eliminating the primal transport plan from the updates, we derive an equivalent dual formulation that reveals BDRS as annealed Sinkhorn under an implicit inverse-linear temperature schedule, with an additional log-scaling momentum term and a cooler kernel. While this explains the role of the temperature parameter in BDRS as an initial temperature, it also reduces the solver's memory requirement from quadratic to linear.
Utilizing this annealing perspective, we introduce overrelaxed BDRS, which combines annealing and overrelaxed scaling within a single recursion.
We derive a primal--dual certificate for both methods that can be evaluated in linear memory without transport plan construction, thus providing a computable stopping rule.
On the DOTmark benchmark, combining momentum with the cooler kernel produces substantially smaller optimality gaps than annealed Sinkhorn under the same schedule.
For pixel-level color transfer between $1024\times1024$ images, BDRS attains a lower repaired transport cost than MDOT-TNT with a 9$\times$ speed up, reaching a relative duality gap of $1.59\%$ in 24 minutes. We further demonstrate a color transfer with $4238\times2365$ images, yielding 10 million pixels per image and approximately one hundred trillion implicit transport entries, reaching a best relative duality gap of $2.41\%$ and $2.80\%$ within 35 hours in each direction on a single NVIDIA L40S GPU.
\end{abstract}

\input{main_content}

\clearpage
\subsection*{AI use statement}
In this work, we used generative AI tools to provide critical ingredients for proving mathematical claims, assist in the writing of proofs, propose or refine hypotheses, design or provide feedback on research methodology or experiments, implement methods.
We have not used generative AI tools to generate synthetic datasets, help develop theoretical models or conceptual frameworks, assist with translation, clean and reformat dataset, support qualitative and thematic data analysis, interpret results are not applicable to this work.
Additionally, we used generative AI tools to discover research topics or identify gaps, brainstorming, edit a research paper to improve readability, identify relevant literature, suggest a structure for a research paper.
We have reviewed all AI-assisted work. 
For example, LLM-generated code was verified with unit tests and both the code and unit tests were checked.
For proof writing, LLMs assisted with the manipulation of equations and the authors ultimately checked the equations. We further tested the proofs with code.
We take responsibility for the final content of this work,
including text, claims or artifacts produced with the aid of generative AI.

\subsection*{Ethics statement}
We do not foresee and ethics issues with our work.

\subsection*{Reproducibility statement}
All code used to support this work will be released upon publication.
The assumptions underlying our theoretical results are stated in Sections~2 and~3, with proofs and supporting derivations provided in Appendices~A--F.
Algorithm~1 specifies the log-domain solver, while Algorithm~2 and Appendix~F detail the computation of the primal--dual certificate.
Appendix~G describes the datasets, preprocessing, parameter choices, implementation details, numerical precision, hardware, and evaluation procedures used in our experiments.
Additional classwise results for the momentum and kernel ablation are provided in Appendix~H.


\bibliography{references}
\bibliographystyle{iclr2027_conference}

\clearpage
\appendix

\input{appendices/app_non_recursive_primal_update}
\input{appendices/app_bdrs_ipot}
\input{appendices/app_pure_dual_state_proof}
\input{appendices/app_a_bdrs_update}
\input{appendices/app_pure_dual_a_bdrs_proof}
\input{appendices/app_dual_state_certificate}
\input{appendices/app_experiments}
\input{appendices/app_experiments_momentum_kernel_ablation_figures}

\end{document}

%% file: math_commands.tex
\usepackage{amsmath,amsfonts,bm}

\def\eqref#1{equation~\ref{#1}}

\def\1{\bm{1}}

\DeclareMathAlphabet{\mathsfit}{\encodingdefault}{\sfdefault}{m}{sl}
\SetMathAlphabet{\mathsfit}{bold}{\encodingdefault}{\sfdefault}{bx}{n}



%% file: acronyms.tex
\begin{acronym}
    \acro{BDRS}{Bregman Douglas--Rachford Splitting}
    \acro{BPRS}{Bregman Peaceman--Rachford Splitting}
    \acro{OT}{optimal transport}
    \acro{EOT}{entropic optimal transport}
    \acro{LP}{linear program}
    \acro{POT}{Python Optimal Transport}
    \acro{PR}{Peaceman--Rachford}
    \acro{DR}{Douglas--Rachford}
    \acro{ADEMM}{Alternating Direction Exponential Multiplier Method}
    \acro{IPOT}{Inexact Proximal point method for exact Optimal Transport}
    \acro{ML}{machine learning}
    \acro{BPPA}{Bregman proximal point algorithm}
    \acro{PPA}{proximal point algorithm}
    \acro{DRS}{Douglas--Rachford splitting}
    \acro{MD}{mirror descent}
    \acro{POT}{Python Optimal Transport}
    \acro{AS}{Annealed Sinkhorn}
    \acro{DAS}{Debiased Annealed Sinkhorn}
    \acro{LAMP}{Log-Averaged Mirror Prox}
    \acro{MDOT-TNT}{Mirror Descent Optimal Transport -- Truncated Newton}
\end{acronym}

%% file: main_content.tex
\section{Introduction}
The discrete \ac{OT} problem is a fundamental optimization problem underlying applications across machine learning, computer vision, economics, and computational biology, dating back to \citet{Monge1781MemoireRemblais} and its modern linear programming formulation by \citet{Kantorovich1942OnMasses}. The \emph{unregularized} problem is of particular interest because it is free of entropic bias: it admits sparse transport plans and exact transport costs. 
Classical methods for unregularized \ac{OT} include network simplex, combinatorial min-cost-flow methods \citep{Pele2009FastDistances}, and interior-point methods \citep{Lee2014PathFlow}.
Recent algorithms also achieve near-linear complexity under assumptions on the input geometry \citep{Agarwal2024FastSettings}.
For dense, large-scale applications, computational and memory efficiency remain central practical considerations.
Entropic OT (EOT), popularized by \citet{Cuturi2013SinkhornTransport}, restored scalability through the Sinkhorn algorithm \citep{Sinkhorn1967ConcerningMatrices}, whose iterations reduce to computing dense matrix--vector products, ideally suited to GPUs.

The efficiency of EOT comes with an inherent speed--accuracy trade-off: as the temperature decreases, the EOT minimizer approaches the unregularized solution, but iteration counts grow and numerical instability arises. A rich line of work navigates this trade-off. \citet{Altschuler2017Near-linearIteration} obtain near-linear-time approximation guarantees via Sinkhorn iterations followed by a rounding step that restores exact marginal feasibility; \citet{Dvurechensky2018ComputationalAlgorithm} establish an $\widetilde{\mathcal{O}}(n^2/\varepsilon^2)$ complexity bound for Sinkhorn and propose the accelerated APDAGD. \citet{Schmitzer2019StabilizedProblems} combines log-domain stabilization with $\varepsilon$-scaling to warm-start Sinkhorn across decreasing temperatures, while \citet{Thibault2021OverrelaxedTransport} accelerate convergence at a fixed temperature via overrelaxed projections. Most recently, \citet{Chizat2024AnnealedDebiasing} characterized the temperature schedules under which \emph{annealed} Sinkhorn converges to unregularized OT, quantified the relaxation error induced by annealing, and proposed a debiased variant. These methods reduce, but do not eliminate, the underlying trade-off.

A complementary line of work applies \acp{PPA} to unregularized OT directly \citep[see][]{Eckstein1993NonlinearProgramming,Parikh2014ProximalAlgorithms,Rockafellar1976MonotoneAlgorithm}. When the Bregman divergence generated by the entropy is used, each proximal subproblem is an EOT problem and can be addressed with Sinkhorn iterations: \ac{IPOT} \citep{Xie2020ADistance} runs a small fixed number of inner iterations (typically one), although its convergence analysis requires subproblem accuracies that a single iteration is not known to satisfy. Subsequent work therefore certifies or accelerates the inner solves: inexact stopping criteria and inertial variants \citep{Yang2022BregmanVariant}, dual block-coordinate solvers \citep{Chu2023AnProblems}, mirror-descent formulations with adaptive temperature schedules \citep{Kemertas2025ATransport,Kemertas2025EfficientGradients}, and Newton-type inner solvers \citep{Tang2024SafeTransport,Wu2026PINS:Transport,Pan2026InexactTransport}.

A third family attacks OT through operator splitting. Douglas--Rachford splitting (DRS) has a long history in convex optimization \citep{Eckstein1992OnOperators}, and has been specialized to OT in several forms: DROT \citep{Mai2021AImplementation} applies Euclidean DRS to unregularized OT, PDOT \citep{Lu2024PDOT:Transport} applies restarted primal--dual hybrid gradient, and \citet{Lindback2023BringingGPUs} adapt DRS to EOT on GPUs; \citet[Table~2]{Burns2026Log-AveragedSpace} catalogue first-order methods for OT in detail. Within this family, \citet{Ma2025BregmanMethod} recently showed that a special case of \ac{BDRS} can be applied directly to unregularized OT; hereafter we use BDRS to refer to this specific case. Overall, BDRS appears to be a distinct method, unrelated to the proximal point and Sinkhorn-based algorithms above. Moreover, \citet[Remark~5.1]{Ma2025BregmanMethod} observe empirically that BDRS performs well \emph{without} requiring the regularization parameter to be tuned toward zero --- an observation that has so far lacked an explanation.

In this paper, we present, among others, an exact algebraic characterization of BDRS that closes this gap. We first show that BDRS coincides with IPOT run with a single inner Sinkhorn iteration, connecting the operator splitting and proximal point views of the same matrix-scaling iteration. We then eliminate the primal transport plan from the iteration entirely and derive an equivalent \emph{dual} formulation: the resulting updates are precisely those of annealed Sinkhorn under an inverse-linear temperature schedule, augmented by a momentum-type extrapolation of the dual potentials. In other words, BDRS \emph{is} \ac{AS} with momentum and a cooler kernel.
This characterization reveals its hidden cooling even when its temperature parameter is held fixed.
We further propose a principled overrelaxation and an anytime optimality certificate that can be computed with linear memory.
Furthermore, it has three practical consequences that we develop in the remainder of the paper: the dual form needs only linear memory, the annealing view suggests a principled overrelaxation, and every sweep admits a computable optimality certificate. Our contributions are as follows.

\begin{itemize}[leftmargin=0pt, label={}, itemsep=2pt]
    \item \textbf{(1) Exact characterization and linear memory.} We establish that BDRS is equivalent to \ac{IPOT} \citep{Xie2020ADistance} with a single inner Sinkhorn iteration and derive a dual formulation of BDRS, revealing it to be \ac{AS} \citep{Chizat2024AnnealedDebiasing} under an implicit inverse-linear cooling schedule with a momentum term and cooler kernel. In Dual \ac{BDRS}, $\eta$ controls the initial temperature and eliminates the $\mathcal{O}(N^2)$ primal transport plan from the iteration, reducing persistent solver memory to $\mathcal{O}(N)$.
    \item \textbf{(2) Overrelaxation.} Guided by the annealing
    interpretation, we introduce Overrelaxed BDRS (O-BDRS), governed by a single parameter $\lambda \in (1,2)$ that steepens the implicit cooling schedule. Its dual formulation unifies the mechanics of annealed and overrelaxed Sinkhorn \citep{Thibault2021OverrelaxedTransport} in a single recursion, and setting $\lambda = 1$ recovers BDRS.
    \item \textbf{(3) Anytime certificate in linear memory.} We derive a computable primal--dual certificate for both Dual BDRS and O-BDRS without the need to compute the primal transport plan. The duality gap certifies additive $\tau$-optimality for the unregularized problem and provides a stopping rule that preserves linear memory. The certificate is valid independently of any convergence guarantee.
    \item \textbf{(4) Scale.} For pixel-level color transfer between $1024 \times 1024$ images, Dual BDRS reaches a relative duality gap of $1.6\%$ in $24$ minutes, attaining a lower feasible transport cost than MDOT-TNT \citep{Kemertas2025ATransport} with a $9\times$ speedup. We further demonstrate color transfer between $4238 \times 2365$ images, solving a single global OT problem with more than $10$ million atoms per image and $10^{14}$ implicit transport entries. 
    Both $1000$-iteration runs in each direction take approximately $35$ hours on a single NVIDIA L40S GPU, attaining best relative duality gaps of $2.41\%$ and $2.80\%$ respectively.
\end{itemize}

\section{Background and \ac{BDRS} Preliminaries}
We are interested in solving the unregularized discrete \ac{OT} problem. For brevity, all references to the \ac{OT} problem in this paper refer to this variant unless otherwise stated. The equations are given by \citep{Kantorovich1942OnMasses, Peyre2020ComputationalTransport}:

\begin{equation}
p^\star = \min_{X \in \Pi(r, c)} \langle C, X \rangle, \quad \Pi(r, c) = \{ X \ge 0 : X \mathbf{1}_n = r, \, X^\top \mathbf{1}_m = c \},
\label{eq:unregularized_ot_primal}
\end{equation}

where $X\in\mathbb{R}^{m\times n}_{+}$ is the transportation plan,
$r\in\mathbb{R}^{m}_{++}$ and $c\in\mathbb{R}^{n}_{++}$ are the source
and target marginals and have equal total mass of one, $\Pi$ is the transportation polytope and $C\in\mathbb{R}^{m\times n}$ is the transport cost matrix.
Entropic \ac{OT} replaces this objective by $\langle C, X \rangle - \eta H(X)$, where $\eta$ is a temperature that controls the approximation to the unregularized version and $H(X):=-\sum_{i=1}^{m}\sum_{j=1}^{n}X_{ij}\log X_{ij}$. The dual of \eqref{eq:unregularized_ot_primal} is given by:
\begin{equation}
    p^\star
    =
    \max_{f\in\mathbb{R}^m,\,g\in\mathbb{R}^n}
    \left\{
        \langle r,f\rangle+\langle c,g\rangle:
        f_i+g_j\le C_{ij}\ \text{for all }i,j
    \right\}.
    \label{eq:unregularized_ot_dual}
\end{equation}
Here, $f$ and $g$ are commonly called the source and target Kantorovich (or dual) potentials and every dual feasible pair $(f,g)$ provides a lower bound on $p^\star$.
\citet{Ma2025BregmanMethod} proposes a special case of \ac{BDRS} that targets the dual form of unregularized discrete \ac{OT}.
The update equations admit a natural primal-dual matrix-scaling interpretation and are given as follows:
\begin{subequations}
\label{eq:bdrs}
\begin{align}
    u^{k} &:= \frac{r}{(X^k \odot K)v^{k-1}} \label{eq:bdrs_u} \\
    v^{k} &:= \frac{c}{(X^k \odot K)^\top u^k} \label{eq:bdrs_v} \\
    X^{k+1} &:= \text{diag}(u^k)(X^k \odot K)\text{diag}(v^k) \label{eq:bdrs_x}
\end{align}
\end{subequations}
where $K = \exp(-C/\eta)$ and $\odot$ is element-wise multiplication. 
That is to say, that $K^{\odot (k+1)} = \exp(-C/\eta_{k+1})$, where $\eta_{k+1}=\frac{\eta}{k+1}$, is the kernel $K$ cooled by temperature $\eta_{k+1}$.
Throughout, division between vectors is understood element-wise, using the symbol $\oslash$ or as a fraction.
Let us define the cumulative row and column scaling vectors up to step $k$ as $a^k = \bigodot_{t=0}^k u^t$ and $b^k = \bigodot_{t=0}^k v^t$. 
Throughout this section, we adopt the initializations $a^{-1}=u^{-1}=\mathbf{1}_m$, $b^{-1}=v^{-1}=\mathbf{1}_n$, and $X^0=\mathbf{1}_{m\times n}$.
It is important to note that by definition, $(X^{k+1})^\top\mathbf{1}_m=c$, which is a property that we will be used extensively in the paper. 
We next state a result that underlies both the dual formulation of \ac{BDRS} and the certified primal--dual bound developed later.
\begin{restatable}[Non-recursive Primal Update]{lemma}{nonrecursiveprimalupdate}
\label{lem:non_recursive_primal_update}
The matrix $X^k$ generated by the \ac{BDRS} iterations at the start of any iteration $k \ge 0$ takes the following non-recursive form:
\begin{equation}
    X^{k+1} = \text{diag}(a^{k}) K^{\odot (k+1)} \text{diag}(b^{k}).\label{eq:bdrs_x_non_recursive}
\end{equation}
Equivalently,
\begin{equation}
    X_{ij}^{k+1}
    =
    \exp\left(
        \log a_i^k+\log b_j^k
        -\frac{k+1}{\eta}C_{ij}
    \right).
\end{equation}
where $X^k$ no longer relies on a recursive update.
\end{restatable}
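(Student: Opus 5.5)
The plan is induction on $k$, using only the update \eqref{eq:bdrs_x}, the definitions $a^k=\bigodot_{t=0}^k u^t$ and $b^k=\bigodot_{t=0}^k v^t$, and the initializations $X^0=\mathbf{1}_{m\times n}$, $a^{-1}=\mathbf{1}_m$, $b^{-1}=\mathbf{1}_n$. The claim to propagate is $X^{k}=\text{diag}(a^{k-1})K^{\odot k}\text{diag}(b^{k-1})$ for all $k\ge 0$; re-indexing gives \eqref{eq:bdrs_x_non_recursive}. For the base case $k=0$, we have $K^{\odot 0}=\mathbf{1}_{m\times n}$ and the diagonal scalings are identities, so the right-hand side equals $X^0$. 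One could instead start the induction at $k=1$: there \eqref{eq:bdrs_x} gives $X^1=\text{diag}(u^0)K\text{diag}(v^0)=\text{diag}(a^0)K^{\odot 1}\text{diag}(b^0)$.

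For the inductive step, assume the claim holds at $k$. First observe that diagonal scaling commutes with the Hadamard product: for any matrices $A,B$ and vectors $x,y$,
\begin{equation*}
\text{diag}(x)(A\odot B)\text{diag}(y)=(\text{diag}(x)A\,\text{diag}(y))\odot B.
\end{equation*}
Hence $X^k\odot K=\text{diag}(a^{k-1})K^{\odot(k+1)}\text{diag}(b^{k-1})$. Substituting into \eqref{eq:bdrs_x} gives
\begin{equation*}
X^{k+1}=\text{diag}(u^k\odot a^{k-1})\,K^{\odot (k+1)}\,\text{diag}(b^{k-1}\odot v^k)=\text{diag}(a^k)K^{\odot(k+1)}\text{diag}(b^k),
\end{equation*}
using the fact that products of diagonal matrices are diagonal matrices of elementwise products, together with the recursions $a^k=a^{k-1}\odot u^k$ and $b^k=b^{k-1}\odot v^k$.

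The entrywise form follows by writing $X^{k+1}_{ij}=a^k_i\,K_{ij}^{k+1}\,b^k_j$ and using $K_{ij}=\exp(-C_{ij}/\eta)$. Taking logarithms requires $a^k,b^k>0$. This positivity is the only mildly delicate point, and I would prove it alongside the main claim in the same induction. Since $K>0$ entrywise, if $X^k>0$ then $X^k\odot K>0$. Combined with $v^{k-1}>0$, this gives $(X^k\odot K)v^{k-1}>0$, so $u^k=r/(\cdot)>0$ because $r>0$. Likewise $v^k>0$ because $c>0$. Consequently $X^{k+1}>0$, and the cumulative products $a^k,b^k$ are strictly positive.

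Finally, the identity $K^{\odot(k+1)}=\exp(-(k+1)C/\eta)$ is immediate and yields the cooled-temperature reading $\eta_{k+1}=\eta/(k+1)$.
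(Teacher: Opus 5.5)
Your proposal is correct and follows the paper's proof essentially step for step: the same index-shifted induction from $X^0=\mathbf{1}_{m\times n}$ with $K^{\odot 0}=\mathbf{1}_{m\times n}$, passing the Hadamard factor $K$ through the diagonal scalings to obtain $K^{\odot(k+1)}$, and merging diagonals via $a^k=a^{k-1}\odot u^k$ and $b^k=b^{k-1}\odot v^k$. Your inductive argument that $u^k,v^k>0$, which justifies taking $\log a^k$ and $\log b^k$ in the entrywise form, is a small but worthwhile addition that the paper leaves implicit.
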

We defer the proof to Appendix~\ref{app:non-recursive-primal-update} and state two important intuitions.
First, substituting the factorization into the \ac{BDRS} updates $u^k$ and $v^k$ eliminates the intermediate $X^k$ from the algorithm, leading to the dual formulation that we derive in
Section~\ref{sec:dual_bdrs}.
Second, the entrywise exponential form, together with column feasibility of $X^{k+1}$, enables the construction of a dual feasible pair $(f,g)$ and hence a certified dual lower bound in Section \ref{sec:certified-primal-dual-bound}.

\section{Main Results}
We present four main results in this section.
First, we establish that \ac{BDRS} is equivalent to \ac{IPOT} \citep{Xie2020ADistance} with a single Sinkhorn iteration.
Second, we show that primal-dual \ac{BDRS} can be formulated as dual \ac{BDRS}, which exposes its connection to \ac{AS} \citep{Chizat2024AnnealedDebiasing} and enables an $\mathcal{O}(n)$ memory implementation.
Third, we introduce O-\ac{BDRS} and dual O-\ac{BDRS}, which unifies overrelaxed and annealed Sinkhorn. 
Finally, we construct a stopping criterion based on primal--dual bounds instead of relying on row and column marginal errors.

\subsection{BDRS is equivalent to IPOT}
We first establish a connection between \ac{BDRS} and \ac{IPOT} \citep{Xie2020ADistance} for discrete \ac{OT}. 
Specifically, warm-started IPOT with a single step in the inner iteration is equivalent to \ac{BDRS}.
Although the two methods arise from fundamentally different optimization perspectives, they yield identical update equations when applied to unregularized \ac{OT}. 
Specifically, \citet{Xie2020ADistance} derives IPOT from the primal perspective using an inexact Bregman proximal point method, whereas \citet{Ma2025BregmanMethod} derives \ac{BDRS} from the dual perspective using Bregman Douglas--Rachford splitting. Despite these distinct derivations, setting the number of IPOT inner iterations to $L=1$ yields exactly the \ac{BDRS} updates. This equivalence provides a useful bridge between the proximal-point and operator-splitting interpretations of unregularized \ac{OT} and may provide insight into future convergence proofs under specific conditions, which is not the main focus of this paper. 
We refer the reader to Appendix~\ref{app:bdrs_ipot} for a detailed derivation of the equivalence.

\subsection{Dual BDRS}
\label{sec:dual_bdrs}
In the following, we first use Lemma \ref{lem:non_recursive_primal_update} to unroll BDRS into its dual form. Second, we show its connection to annealed Sinkhorn.
Observe in \eqref{eq:bdrs} that evaluating the sequence requires storing $X^k$ at each step, which can be prohibitively large. In this section, we establish that $X^k$ is entirely redundant for the progression of the dual variables. By algebraically unrolling the BDRS sequence, we demonstrate that the algorithm can be fully decoupled from the primal state, reducing the updates to a highly efficient sequence of alternating projections over dual scaling vectors only.

\begin{restatable}[Dual Formulation of BDRS]{proposition}{bdrsprop}
\label{prop:dual_bdrs}
The \ac{BDRS} iterations can be completely decoupled from the intermediate matrix $X^k$. The algorithm reduces exactly to a sequence of alternating projections over the cumulative scaling vectors $a$ and $b$, with initial conditions $a^{-1} = b^{-1} = b^{-2} = \mathbf{1}$:
\begin{subequations}
\label{eq:dual_bdrs}
\begin{align}
    a^{k} &= \frac{r}{K^{\odot (k+1)} \left[ \frac{(b^{k-1})^{\odot 2}}{b^{k-2}} \right]} \label{eq:dual_bdrs_u} \\
    b^{k} &= \frac{c}{(K^{\odot (k+1)})^\top a^{k}} \label{eq:dual_bdrs_v} \\
    X^{k+1} &= \text{diag}(a^k) K^{\odot (k+1)} \text{diag}(b^k) \label{eq:dual_bdrs_x}
\end{align}
\end{subequations}
\end{restatable}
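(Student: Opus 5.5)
The plan is a direct substitution argument. We use Lemma~\ref{lem:non_recursive_primal_update} to replace $X^k$ inside the \ac{BDRS} updates \eqref{eq:bdrs_u}--\eqref{eq:bdrs_v}. We then multiply through by the previous cumulative scalings and rewrite everything in terms of $a^k=a^{k-1}\odot u^k$ and $b^k=b^{k-1}\odot v^k$.

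\textbf{Step 1: a uniform factorization of $X^k$.} First, I would record that for every $k\ge 0$,
\[
X^k=\text{diag}(a^{k-1})\,K^{\odot k}\,\text{diag}(b^{k-1}).
\]
For $k\ge 1$ this is Lemma~\ref{lem:non_recursive_primal_update} with the index shifted by one. For $k=0$ it holds directly from the initialization: $X^0=\mathbf{1}_{m\times n}=K^{\odot 0}$, and $a^{-1}=\mathbf{1}_m$, $b^{-1}=\mathbf{1}_n$. Multiplying entrywise by $K$ then gives
\[
X^k\odot K=\text{diag}(a^{k-1})\,K^{\odot(k+1)}\,\text{diag}(b^{k-1}),
\]
since diagonal scalings commute with Hadamard products by $K$.

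\textbf{Step 2: the row update.} Substituting into \eqref{eq:bdrs_u} yields
\[
u^k=\frac{r}{a^{k-1}\odot K^{\odot(k+1)}(b^{k-1}\odot v^{k-1})}.
\]
Multiplying by $a^{k-1}$ gives $a^k=r\oslash K^{\odot(k+1)}(b^{k-1}\odot v^{k-1})$. The remaining task is to express $v^{k-1}$ through cumulative vectors, namely $v^{k-1}=b^{k-1}\oslash b^{k-2}$, so that $b^{k-1}\odot v^{k-1}=(b^{k-1})^{\odot 2}\oslash b^{k-2}$. This is \eqref{eq:dual_bdrs_u}. For $k\ge 1$ the identity $v^{k-1}=b^{k-1}\oslash b^{k-2}$ is immediate from $b^{k-1}=b^{k-2}\odot v^{k-1}$. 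For $k=0$ it is enforced by the convention $b^{-2}=\mathbf{1}$, which is consistent with $v^{-1}=\mathbf{1}$.

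\textbf{Step 3: the column update and the plan.} Substituting the same factorization into \eqref{eq:bdrs_v} gives
\[
v^k=\frac{c}{b^{k-1}\odot (K^{\odot(k+1)})^\top(a^{k-1}\odot u^k)}=\frac{c}{b^{k-1}\odot (K^{\odot(k+1)})^\top a^k}.
\]
Multiplying by $b^{k-1}$ yields \eqref{eq:dual_bdrs_v}. Finally, \eqref{eq:dual_bdrs_x} is exactly Lemma~\ref{lem:non_recursive_primal_update}. Since neither \eqref{eq:dual_bdrs_u} nor \eqref{eq:dual_bdrs_v} involves $X^k$, the recursion in $(a,b)$ is self-contained, and $X^{k+1}$ is only an optional readout.

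The computations themselves are routine. The main obstacle I expect is the index bookkeeping: the Lemma is stated for $X^{k+1}$, while the updates use $X^k$, and the momentum term needs $b^{k-2}$. I would therefore treat $k=0$ explicitly and check that the conventions $a^{-1}=b^{-1}=b^{-2}=\mathbf{1}$ reproduce the original first iteration, with $u^0=r\oslash K\mathbf{1}$ and $a^0=u^0$. After that, $k\ge 1$ follows uniformly from Steps 1--3.
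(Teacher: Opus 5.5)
Your proposal is correct and follows essentially the same route as the paper. The paper substitutes $X^k\odot K=\operatorname{diag}(a^{k-1})K^{\odot(k+1)}\operatorname{diag}(b^{k-1})$ from Lemma~\ref{lem:non_recursive_primal_update} into the row and column updates, absorbs $a^{k-1}$ and $b^{k-1}$ into the cumulative scalings (packaged there as Lemma~\ref{lem:cum_duals}), and then eliminates $v^{k-1}=b^{k-1}\oslash b^{k-2}$; your explicit treatment of $k=0$ and of the convention $b^{-2}=\mathbf{1}$ is slightly more careful than the paper's write-up but changes nothing substantive.
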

The proof is deferred to Appendix~\ref{app:proof-pure-dual-bdrs}.
Dual BDRS indeed implicitly stores the cumulative scaling vectors $a^k$ and $b^k$, and it is important to note that the variables are arbitrary and can be simply replaced with $u^k$ and $v^k$ such that the form is analogous to Sinkhorn.

\begin{corollary}[Dual BDRS is annealed Sinkhorn with momentum]
\label{cor:bdrs_momentum}
Relative to the standard annealed Sinkhorn recursion of
\citet{Chizat2024AnnealedDebiasing}, BDRS makes exactly two
modifications to the row update:
\begin{equation}
    \underbrace{
        \frac{r}{K^{\odot k}b^{k-1}}
    }_{\text{Annealed Sinkhorn}}
    \quad\longrightarrow\quad
    \underbrace{
        \frac{r}{
            K^{\odot(k+1)}
            \left[(b^{k-1})^{\odot 2}\oslash b^{k-2}\right]
        }
    }_{\text{Dual BDRS}}.
    \label{eq:annealed_sinkhorn_to_bdrs}
\end{equation}
First, dual BDRS advances from $K^{\odot k}$ to
$K^{\odot(k+1)}$. Second, it replaces the
previous column scaling $b^{k-1}$ by the extrapolated scaling
\begin{equation}
    \widetilde b^{\,k-1}
    :=
    (b^{k-1})^{\odot 2}\oslash b^{k-2}.
\end{equation}
The column update and the completed matrix retain the annealed
Sinkhorn form shown in Proposition~\ref{prop:dual_bdrs}. Moreover,
\begin{equation}
    \log\widetilde b^{\,k-1}
    =
    \log b^{k-1}
    +
    \left(
        \log b^{k-1}-\log b^{k-2}
    \right),
\end{equation}
so the second modification is unit inertial extrapolation in the
column log scaling. In this precise sense, BDRS is annealed Sinkhorn
with log scaling momentum on a cooler kernel.
\end{corollary}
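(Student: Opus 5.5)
The corollary is a direct reading of Proposition~\ref{prop:dual_bdrs} next to the annealed Sinkhorn recursion, so the plan is a side-by-side comparison. First I will recall the annealed Sinkhorn recursion of \citet{Chizat2024AnnealedDebiasing} in scaling form, with kernel $\exp(-C/\eta_k)$ at iteration $k$. For the inverse-linear schedule $\eta_k=\eta/k$ this kernel equals $K^{\odot k}$. The row update is then $a^k = r \oslash (K^{\odot k} b^{k-1})$, the column update is $b^k = c \oslash ((K^{\odot k})^\top a^k)$, and the completed plan is $\operatorname{diag}(a^k)K^{\odot k}\operatorname{diag}(b^k)$. I will fix this indexing convention explicitly, since the claim ``advances from $K^{\odot k}$ to $K^{\odot(k+1)}$'' is only meaningful relative to it.

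Next I will place \eqref{eq:dual_bdrs_u} beside this row update; this gives \eqref{eq:annealed_sinkhorn_to_bdrs} by inspection. Exactly two things differ. The kernel exponent is $k+1$ rather than $k$, so the temperature is $\eta/(k+1)$, which is the cooler kernel. The argument $b^{k-1}$ is replaced by $(b^{k-1})^{\odot 2}\oslash b^{k-2}$. Everything else in the row update is identical.

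I then check that nothing else changes. Equations \eqref{eq:dual_bdrs_v} and \eqref{eq:dual_bdrs_x} have the same structural form as the annealed Sinkhorn column update and completed plan, with the kernel at the current temperature. The column update is the exact projection onto the column marginal $c$, with no extrapolation. The plan is the diagonal scaling of the cooled kernel, consistent with Lemma~\ref{lem:non_recursive_primal_update}.

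Finally, I will take entrywise logarithms of $\widetilde b^{\,k-1}$. Since all scalings are strictly positive, because $r,c>0$ and $K>0$, this gives $\log\widetilde b^{\,k-1} = 2\log b^{k-1}-\log b^{k-2}$. Rearranged, this is $\log b^{k-1} + (\log b^{k-1}-\log b^{k-2})$, which is a heavy-ball/Nesterov-type extrapolation $x_k + \beta(x_k-x_{k-1})$ with $\beta=1$. I will also note the first iterate: with $b^{-1}=b^{-2}=\mathbf{1}$ the momentum term vanishes at $k=0$, so the recursion starts as plain Sinkhorn on $K$.

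The only real obstacle is the bookkeeping of indices. The annealed Sinkhorn reference must be stated with the indexing that makes the comparison exact, namely kernel $K^{\odot k}$ at step $k$. Under that convention, BDRS at step $k$ corresponds to temperature $\eta_{k+1}$, and the comparison is well defined.
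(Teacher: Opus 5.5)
Your strategy is the same as the paper's: a side-by-side reading of Proposition~\ref{prop:dual_bdrs} against annealed Sinkhorn, then taking logarithms of $\widetilde b^{\,k-1}$. However, the index bookkeeping you identify as the crux is resolved incorrectly, and your step ``nothing else changes'' fails under your own convention.

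You state the reference annealed Sinkhorn with the same kernel $K^{\odot k}$ in the row update, the column update and the completed plan. Dual BDRS at step $k$ uses $K^{\odot(k+1)}$ in all three. With your reference, the column updates $c\oslash((K^{\odot k})^\top a^k)$ and $c\oslash((K^{\odot(k+1)})^\top a^k)$ differ, and so do the plans. BDRS would then not make ``exactly two modifications to the row update.'' Shifting the annealed Sinkhorn index by one does not rescue this. All three kernels would then coincide, and the cooler-kernel modification disappears, leaving only momentum. With a single kernel per iteration, no index alignment confines the kernel change to the row update.

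The missing point is that the recursion of \citet{Chizat2024AnnealedDebiasing} advances the temperature between the two half-steps; this is the version used in Appendix~\ref{app:annealed-sinkhorn-comparison}. With $\eta_k=\eta/(k+1)$ and $K_k=K^{\odot(k+1)}$, it reads
$u^k=r\oslash(K_{k-1}v^{k-1})$, $v^k=c\oslash(K_k^\top u^k)$, $X^{k+1}=\operatorname{diag}(u^k)K_k\operatorname{diag}(v^k)$.
So the row update uses the previous kernel $K^{\odot k}$, while the column update and the plan already use $K^{\odot(k+1)}$.

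With this reference, your comparison goes through verbatim. The column update and plan coincide with those in Proposition~\ref{prop:dual_bdrs}. The row update differs exactly in the kernel exponent and in the replacement $b^{k-1}\mapsto\widetilde b^{\,k-1}$. Your logarithmic identity, the positivity justification, and the remark that the extrapolation is inactive at $k=0$ are fine as written.
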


The result follows by direct comparison with
Proposition~\ref{prop:dual_bdrs}. The exact index matching and
initialization relative to \citet{Chizat2024AnnealedDebiasing} are
provided in Appendix~\ref{app:annealed-sinkhorn-comparison}.

\begin{remark}
Dual \ac{BDRS} reveals that primal-dual BDRS implicitly follows an inverse-linear temperature schedule even when $\eta$ is held fixed.
This provides a potential explanation for the observation of \citet[Remark~5.1]{Ma2025BregmanMethod} that the algorithm performs well without requiring $\eta$ to be close to zero.
A related cumulative-scaling recurrence appears in \citet{Ma2026ConvergenceAlgorithm}, where $C=0$ and $X^0=M>0$, contains the same second-order extrapolation.
For general transport costs, our characterization exposes the evolving kernel $K^{\odot(k+1)}$ and thereby identifies the implicit annealing.
Although \citet{Chizat2024AnnealedDebiasing} noted similarities between \ac{AS} and \ac{IPOT}, we prove that \ac{BDRS} and \ac{IPOT} are equivalent when $L=1$ and show precisely how \ac{AS} transforms into \ac{BDRS}/\ac{IPOT}.
We note that the convergence analysis from \ac{AS} does not immediately transfer to \ac{BDRS} and we defer this analysis to future work.
\end{remark}

\paragraph{Memory Efficiency of Dual BDRS}
The bottleneck of primal-dual \ac{BDRS} is its quadratic memory requirement, as the dense transport plan $X^k \in \mathbb{R}^{m \times n}$ must be explicitly stored throughout the iterations. For \ac{OT} problems with source and target distributions of size $N$, this results in $\mathcal{O}(N^2)$ memory complexity. 
Dual \ac{BDRS} eliminates this bottleneck by removing $X^k$ from the intermediate updates and requires only the one-dimensional vectors.
Consequently, the memory required for the algorithm is reduced from $\mathcal{O}(N^2)$ to $\mathcal{O}(N)$. 
The $O(N)$ bound assumes that pairwise costs can be evaluated on demand from a linear-size representation. An explicitly stored dense cost matrix requires $O(N^2)$ memory.
Regardless, each iteration still requires $O(N^2)$ pairwise work.

\subsection{Overrelaxed BDRS}
\label{sec:overrelaxed_bdrs}
We established that BDRS is an annealed Sinkhorn algorithm with a built-in momentum term in Corollary~\ref{cor:bdrs_momentum}.
We now show that there is a principled manner to obtain a faster annealing rate. To do so, we first observe that the standard BDRS state update (equation \ref{eq:bdrs_x}) can be factored as follows:
\begin{equation*}
    X^{k+1} = X^k \odot \Big( K \odot u^k (v^k)^\top \Big)
\end{equation*}
In this form, the mechanism is transparent: the update takes the previous state $X^k$ and applies an element-wise multiplicative correction factor, defined as $(K \odot u^k(v^k)^\top)$. Building directly on this geometric insight, we introduce Overrelaxed BDRS (O-BDRS) by applying an overrelaxation parameter $\lambda \in [1, 2)$ to explicitly stretch this correction step. Exponentiating the correction factor element-wise yields the O-BDRS primal update:
\begin{equation}
    X^{k+1} = X^k \odot \Big( K \odot u^k (v^k)^\top \Big)^{\odot \lambda} \label{eq:o_bdrs_x}
\end{equation}
This specific exponentiation holds a connection to \ac{BPRS}, the details of which we defer to Appendix \ref{app:a-bdrs-update}. Crucially, by altering only this primal update rule, we can again manipulate the overrelaxed algorithm into a dual formulation and show that it elegantly unifies overrelaxed and annealed Sinkhorn. All proofs in this section are deferred to Appendix~\ref{app:proof-pure-dual-a-bdrs}.

\begin{restatable}[O-BDRS as Overrelaxed Annealed Sinkhorn]
{proposition}{accbdrsprop}
\label{prop:dual-o-bdrs}
Define cumulative row and column scaling vectors exponentiated by $\lambda$ up to step $k$ as $\alpha^k = \bigodot_{t=0}^k (u^t)^{\odot \lambda}$ and $\beta^k = \bigodot_{t=0}^k (v^t)^{\odot \lambda}$, $\lambda\in[1,2)$, with initial conditions $\alpha^{-1}=\beta^{-2}=\beta^{-1}=\mathbf{1}$.
For $k\geq0$, define
$K_k^{\mathrm O}:=K^{\odot(\lambda k+1)}$ and
\[
    \widetilde\beta^{k-1}
    :=
    \beta^{k-1}
    \odot
    \left(
        \beta^{k-1}\oslash\beta^{k-2}
    \right)^{\odot1/\lambda}.
\]
Then O-BDRS admits the following equations
\begin{align}
    \widehat\alpha^k
    &:=
    r\oslash
    \left(
        K_k^{\mathrm O}\widetilde\beta^{k-1}
    \right),
    &
    \widehat\beta^k
    &:=
    c\oslash
    \left(
        (K_k^{\mathrm O})^\top\widehat\alpha^k
    \right),
    \label{eq:obdrs_sinkhorn_sweep}\\
    \alpha^k
    &:=
    (\alpha^{k-1})^{\odot(1-\lambda)}
    \odot
    (\widehat\alpha^k)^{\odot\lambda},
    &
    \beta^k
    &:=
    (\beta^{k-1})^{\odot(1-\lambda)}
    \odot
    (\widehat\beta^k)^{\odot\lambda}.
    \label{eq:obdrs_overrelaxation}
\end{align}
The corresponding O-BDRS state is
\begin{equation}
    X^{k+1}
    =
    \operatorname{diag}(\alpha^k)
    K^{\odot\lambda(k+1)}
    \operatorname{diag}(\beta^k).
    \label{eq:obdrs_dual_state}
\end{equation}
\end{restatable}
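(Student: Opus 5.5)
The plan is to mirror the proof of Proposition~\ref{prop:dual_bdrs}. I take the O-BDRS iteration to be the BDRS updates \eqref{eq:bdrs_u}--\eqref{eq:bdrs_v} for $u^k$ and $v^k$, with \eqref{eq:bdrs_x} replaced by the overrelaxed state update \eqref{eq:o_bdrs_x}. It starts from $X^0=\mathbf{1}_{m\times n}$ and $u^{-1}=v^{-1}=\mathbf{1}$.

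\textbf{Step 1 (non-recursive state).} First I will prove \eqref{eq:obdrs_dual_state} by induction, as the O-BDRS analogue of Lemma~\ref{lem:non_recursive_primal_update}. Since $X^0=\mathbf{1}$, we have
\[
X^{k+1}=\bigodot_{t=0}^{k}\bigl(K\odot u^t(v^t)^\top\bigr)^{\odot\lambda}.
\]
Entrywise this product equals $K^{\odot\lambda(k+1)}$ multiplied by $\prod_t (u^t_i)^\lambda (v^t_j)^\lambda=\alpha^k_i\beta^k_j$, which gives \eqref{eq:obdrs_dual_state}. All quantities are strictly positive because $K>0$ and $r,c>0$. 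Hence the fractional powers, and the powers $1/\lambda$ and $1-\lambda$ used below, are well defined. As a consequence,
\[
X^k\odot K=\operatorname{diag}(\alpha^{k-1})\,K^{\odot(\lambda k+1)}\,\operatorname{diag}(\beta^{k-1})=\operatorname{diag}(\alpha^{k-1})\,K_k^{\mathrm O}\,\operatorname{diag}(\beta^{k-1}).
\]
This also holds at $k=0$ with $\alpha^{-1}=\beta^{-1}=\mathbf{1}$.

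\textbf{Step 2 (row update).} I substitute this factorization into $u^k=r/((X^k\odot K)v^{k-1})$. This gives
\[
u^k=r\oslash\bigl(\alpha^{k-1}\odot K_k^{\mathrm O}(\beta^{k-1}\odot v^{k-1})\bigr).
\]
By the definition of $\beta$, we have $(v^{k-1})^{\odot\lambda}=\beta^{k-1}\oslash\beta^{k-2}$, so $v^{k-1}=(\beta^{k-1}\oslash\beta^{k-2})^{\odot 1/\lambda}$. Hence $\beta^{k-1}\odot v^{k-1}=\widetilde\beta^{k-1}$. At $k=0$ this is consistent with $\beta^{-2}=\beta^{-1}=\mathbf{1}$ and $v^{-1}=\mathbf{1}$. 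Setting $\widehat\alpha^k:=\alpha^{k-1}\odot u^k$ yields exactly $\widehat\alpha^k=r\oslash(K_k^{\mathrm O}\widetilde\beta^{k-1})$. Then
\[
\alpha^k=\alpha^{k-1}\odot(u^k)^{\odot\lambda}=\alpha^{k-1}\odot(\widehat\alpha^k\oslash\alpha^{k-1})^{\odot\lambda}=(\alpha^{k-1})^{\odot(1-\lambda)}\odot(\widehat\alpha^k)^{\odot\lambda},
\]
which is \eqref{eq:obdrs_overrelaxation}.

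\textbf{Step 3 (column update).} The column update is handled the same way:
\[
v^k=c\oslash\bigl((X^k\odot K)^\top u^k\bigr)=c\oslash\bigl(\beta^{k-1}\odot(K_k^{\mathrm O})^\top(\alpha^{k-1}\odot u^k)\bigr)=c\oslash\bigl(\beta^{k-1}\odot(K_k^{\mathrm O})^\top\widehat\alpha^k\bigr).
\]
With $\widehat\beta^k:=\beta^{k-1}\odot v^k$ this is the second formula in \eqref{eq:obdrs_sinkhorn_sweep}. The overrelaxed form of $\beta^k$ follows exactly as for $\alpha^k$.

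\textbf{Main obstacle.} I expect no real difficulty beyond bookkeeping. The delicate point is the index matching in Step 2. The current kernel exponent is $\lambda k+1$: it collects $\lambda k$ from the accumulated state and $1$ from the fresh factor $K$. This is where $K_k^{\mathrm O}$ differs from the $\lambda(k+1)$ exponent in the stored state. In addition, recovering $v^{k-1}$ from the $\lambda$-exponentiated cumulative vectors forces the $1/\lambda$ power in $\widetilde\beta^{k-1}$. Finally, I will verify the base case $k=0$ separately from the general step to confirm that the stated initial conditions reproduce $u^0$ and $v^0$. As a sanity check, setting $\lambda=1$ recovers Proposition~\ref{prop:dual_bdrs}.
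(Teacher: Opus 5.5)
Your proof is correct and follows the paper's route. You unroll the overrelaxed state to $X^k=\operatorname{diag}(\alpha^{k-1})K^{\odot\lambda k}\operatorname{diag}(\beta^{k-1})$, substitute $X^k\odot K$ into the unchanged BDRS projections, and recover $v^{k-1}=(\beta^{k-1}\oslash\beta^{k-2})^{\odot 1/\lambda}$. The only difference is that you identify $\widehat\alpha^k=\alpha^{k-1}\odot u^k$ and $\widehat\beta^k=\beta^{k-1}\odot v^k$ directly, whereas the paper first writes the column update with $\alpha^{k-1}\odot(\alpha^k\oslash\alpha^{k-1})^{\odot 1/\lambda}$ and then shows separately that this vector equals $\widehat\alpha^k$; this is the same identity reached more directly, and your telescoped Hadamard product also spells out the state induction that the paper only asserts.
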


Consequently, observe how this is analogous to an annealed Sinkhorn iteration, followed by the overrelaxation form proposed by \citet{Thibault2021OverrelaxedTransport}. Setting $\lambda=1$
recovers Dual BDRS.
The effect of $\lambda$ is discussed in Section~\ref{sec:experiments} and Appendix~\ref{app:matched-temperature-ablation}.

\subsection{Certified Primal--Dual Bounds in Linear Memory}
\label{sec:certified-primal-dual-bound}

Many existing algorithms monitor the marginal residuals, which measure
proximity to the transport polytope but do not certify the transport cost.
We now construct a primal--dual certificate for unregularized OT using the
scalings already computed by Dual BDRS and Dual O-BDRS. The dual lower bound
requires only vector operations. The primal upper bound additionally uses
a cost-weighted reduction that can be evaluated alongside the column update.
Both bounds preserve the linear-memory formulation but require additional computation of the same order as a solver iteration.

\paragraph{Scalings shared by both methods.}
For any iteration $k\geq 0$, we write the scalings and effective
temperature at this iteration as
\begin{equation}
\begin{array}{c|cccc}
    \text{Method} & A & B & t & \varepsilon \\
    \hline
    \text{BDRS}
    & a^k & b^k & \widetilde b^{k-1} & \eta/(k+1) \\
    \text{O-BDRS}
    & \widehat\alpha^k & \widehat\beta^k
    & \widetilde\beta^{k-1} & \eta/(\lambda k+1)
\end{array}
\label{eq:cert_shared_scalings}
\end{equation}
where $t$ is the incoming column scaling used in the row update:
\[
    \widetilde b^{k-1}
    :=(b^{k-1})^{\odot 2}\oslash b^{k-2},
    \qquad
    \widetilde\beta^{k-1}
    :=\beta^{k-1}\odot
       (\beta^{k-1}\oslash\beta^{k-2})^{\odot 1/\lambda}.
\]
The stated initializations give $t=\mathbf{1}_n$ at $k=0$.
With $K_\varepsilon:=\exp(-C/\varepsilon)$, both methods satisfy
\begin{equation}
    A=r\oslash(K_\varepsilon t),
    \qquad
    B=c\oslash(K_\varepsilon^\top A).
    \label{eq:cert_normalizations}
\end{equation}
For O-BDRS, these are the unrelaxed scalings
$\widehat\alpha^k$ and $\widehat\beta^k$, evaluated before
overrelaxation at the effective temperature $\eta/(\lambda k+1)$.
Consider the implicit plan immediately after the row update,
\begin{equation}
    Z:=\operatorname{diag}(A)K_\varepsilon\operatorname{diag}(t).
    \label{eq:cert_intermediate_plan}
\end{equation}
By construction, its row marginal is $r$, and its column marginal is
\begin{equation}
    s:=Z^\top\mathbf{1}_m
      =t\odot(K_\varepsilon^\top A)
      =c\odot t\oslash B.
    \label{eq:cert_intermediate_columns}
\end{equation}
Observe that $c$, $t$, and $B$ are already available, so the column
marginal $s=c\odot t\oslash B$ of $Z$ can be computed using only
elementwise vector operations. One could instead use the
column-feasible matrix
$Y=\operatorname{diag}(A)K_\varepsilon\operatorname{diag}(B)$,
but evaluating its row marginal $A\odot(K_\varepsilon B)$ would
require an additional kernel--vector multiplication. We therefore
use $Z$ for the upper bound: its row constraint holds by
construction, and its column error can be evaluated directly
from the available scalings.

\begin{restatable}[Primal--Dual Certificate]{proposition}{primaldualcertificate}
\label{prop:dual_state_certificate}
Let $r\in\mathbb{R}_{++}^m$ and $c\in\mathbb{R}_{++}^n$ have total mass one,
let $C\in\mathbb{R}^{m\times n}$ have finite entries, and let
$A,B,t>0$ and $\varepsilon>0$ satisfy
\eqref{eq:cert_normalizations}. Let $\Omega$ be any known bound satisfying
$\Omega\geq\operatorname{osc}(C)$. Define
\begin{align}
    f&:=\varepsilon\log A,
    &g&:=\varepsilon\log(B\oslash c),
    \label{eq:cert_dual_potentials}\\
    \delta&:=\frac12\|s-c\|_1,
    &w&:=(C\odot K_\varepsilon)^\top A,
    \label{eq:cert_upper_statistics}
\end{align}
and
\begin{equation}
    L:=\langle r,f\rangle+\langle c,g\rangle,
    \qquad
    U:=\langle t,w\rangle+\Omega\delta.
    \label{eq:cert_bounds}
\end{equation}
Then $(f,g)$ is feasible for the unregularized Kantorovich dual.
Furthermore, the intermediate plan $Z$ admits a feasible repair
$\widehat Z\in\Pi(r,c)$ such that
\begin{equation}
    L\leq p^\star
    \leq\langle C,\widehat Z\rangle
    \leq U.
    \label{eq:cert_interval}
\end{equation}
Consequently,
\begin{equation}
    0\leq\langle C,\widehat Z\rangle-p^\star\leq U-L.
    \label{eq:cert_gap}
\end{equation}
\end{restatable}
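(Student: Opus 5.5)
The plan has three steps: verify dual feasibility of $(f,g)$ directly from the column normalization in \eqref{eq:cert_normalizations}; build an explicit repair $\widehat Z$ of the intermediate plan $Z$ in \eqref{eq:cert_intermediate_plan} in the style of the rounding of \citet{Altschuler2017Near-linearIteration}; and then chain weak duality with the cost of the repair. The inequality in \eqref{eq:cert_gap} then follows by subtracting $p^\star$ in \eqref{eq:cert_interval}.

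\emph{Dual feasibility.} By \eqref{eq:cert_normalizations}, $B_j/c_j=1/\sum_{l}A_lK_{\varepsilon,lj}$. Hence
\[
\exp\bigl((f_i+g_j)/\varepsilon\bigr)=\frac{A_i}{\sum_l A_l K_{\varepsilon,lj}}.
\]
The constraint $f_i+g_j\le C_{ij}$ is equivalent to this quantity being at most $1/K_{\varepsilon,ij}$. That is, it is equivalent to $A_iK_{\varepsilon,ij}\le\sum_l A_lK_{\varepsilon,lj}$, which holds because every summand is positive. Weak duality for \eqref{eq:unregularized_ot_dual} then gives $L\le p^\star$. Note that $t$ plays no role in this step.

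\emph{Repair and upper bound.} Three facts about $Z$ are needed:
\begin{itemize}[leftmargin=*, label={}]
\item $Z\mathbf 1_n=r$ exactly, by the definition of $A$.
\item $Z^\top\mathbf 1_m=s$, by \eqref{eq:cert_intermediate_columns}.
\item $\mathbf 1^\top s=\mathbf 1^\top r=1$, so the total column excess $\sum_j (s_j-c_j)_+$ equals the total deficit $\sum_j (c_j-s_j)_+$, and both equal $\delta$.
\end{itemize}
If $\delta=0$, set $\widehat Z=Z$. Otherwise:
\begin{enumerate}[leftmargin=*, label=(\roman*)]
\item Scale down each column with $s_j>c_j$ by the factor $c_j/s_j$, producing $Z'\le Z$. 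The removed mass $R:=Z-Z'\ge0$ has total $\delta$.
\item Define the row deficit $e:=r-Z'\mathbf 1_n=R\mathbf 1_n\ge0$ and the column deficit $d:=c-Z'^\top\mathbf 1_m\ge0$. Both have total mass $\delta$.
\item Set $\widehat Z:=Z'+ed^\top/\delta$. This matrix is nonnegative and lies in $\Pi(r,c)$, so $p^\star\le\langle C,\widehat Z\rangle$.
\end{enumerate}
For the cost, write
\[
\langle C,\widehat Z\rangle=\langle C,Z\rangle-\langle C,R\rangle+\langle C,ed^\top\rangle/\delta .
\]
We have $\langle C,R\rangle\ge\delta\min C$ and $\langle C,ed^\top\rangle/\delta\le\delta\max C$. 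Therefore $\langle C,\widehat Z\rangle\le\langle C,Z\rangle+\delta\operatorname{osc}(C)\le\langle C,Z\rangle+\Omega\delta$. Finally,
\[
\langle C,Z\rangle=\sum_{ij}C_{ij}A_iK_{\varepsilon,ij}t_j=\langle t,(C\odot K_\varepsilon)^\top A\rangle=\langle t,w\rangle ,
\]
which yields $\langle C,\widehat Z\rangle\le U$.

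\emph{Main obstacle.} The delicate step is the repair cost estimate. One must check that the mass removed in step (i) and the mass reinserted in step (iii) are both exactly $\delta$, so that the net change in cost is controlled by $\operatorname{osc}(C)$ times $\delta$. This relies on the row constraint of $Z$ holding exactly and on the two marginals having equal total mass. Those are precisely the reasons for repairing $Z$ rather than the column-feasible matrix $Y$.
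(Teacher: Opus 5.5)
Your proof is correct and follows the paper's argument essentially step for step. Your dual feasibility check is the paper's identity $f_i+g_j-C_{ij}=\varepsilon\log(Z_{ij}/s_j)\le 0$ with the factor $t_j$ cancelled, and your column clipping, rank-one deficit correction $ed^\top/\delta$ and $\operatorname{osc}(C)\,\delta$ cost bound are the paper's repair. One correction to your closing remark: the column-feasible $Y$ could be repaired symmetrically, since its marginals also have equal mass and one constraint holds exactly, so the paper uses $Z$ only because its column marginal $s=c\odot t\oslash B$ is available without an extra kernel--vector product.
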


The proof and the construction of $\widehat Z$, and the stabilized log-domain computation of $U$ are given in
Appendix~\ref{app:dual_state_certificate}. The upper bound has two terms:
$\langle t,w\rangle=\langle C,Z\rangle$ is the cost of the intermediate
plan, and $\Omega\delta$ bounds the cost of restoring its column marginal.
The repair is the row-feasible specialization of the rounding procedure
of \citet{Altschuler2017Near-linearIteration}.
For costs in $[0,1]$, one may take $\Omega=1$ without computing the exact
cost range.

\subsection{Algorithm and Log-Domain Execution}
\label{sec:algorithm}

Algorithm~\ref{alg:o_bdrs} synthesizes Dual O-BDRS directly in the log-domain using the LogSumExp (LSE) operator for numerical stability.
In each iteration, we define an implicit effective temperature $\eta_k$ and compute an extrapolated momentum variable $\log \tilde{\beta}^{k-1}$. 
The core iteration alternates between evaluating unrelaxed row and column targets ($\log \hat{\alpha}^k$ and $\log \hat{\beta}^k$) and applying geometric overrelaxation via $\lambda \in [1,2)$. 
Here, $\operatorname{LSE}$ reduces over the columns of its matrix argument: $[\operatorname{LSE}(M)]_p=\log\sum_q\exp(M_{pq})$. The column update uses the same operator with the transposed cost matrix.
Setting $\lambda=1$ perfectly recovers Dual BDRS.
When a convergence check is required, the anytime primal-dual certificate $U-L$ is evaluated periodically in linear memory using the available scalings, as detailed in Proposition~\ref{prop:dual_state_certificate} and Appendix~\ref{app:dual_state_certificate}.

\begin{algorithm}[htbp]
\caption{Dual O-BDRS (Log-Domain)}
\label{alg:o_bdrs}
\begin{algorithmic}[1]
\STATE \textbf{Initialize:} $\log \alpha^{-1}, \log \beta^{-2}, \log \beta^{-1} \leftarrow \mathbf{0}$
\FOR{$k = 0, 1, 2, \dots$}
    \STATE $\eta_k \leftarrow \eta / (\lambda k + 1)$ 
    \STATE $\log \tilde{\beta}^{k-1} \leftarrow \log \beta^{k-1} + \frac{1}{\lambda} (\log \beta^{k-1} - \log \beta^{k-2})$
    \STATE $\log \hat{\alpha}^k \leftarrow \log r - \text{LSE}\left(-\frac{C}{\eta_k} + \mathbf{1}_m (\log \tilde{\beta}^{k-1})^\top \right)$
    \STATE $\log \alpha^k \leftarrow (1-\lambda) \log \alpha^{k-1} + \lambda \log \hat{\alpha}^k$
    \STATE $\log \hat{\beta}^k \leftarrow \log c - \text{LSE}\left(-\frac{C^\top}{\eta_k} + \mathbf{1}_n (\log \hat{\alpha}^k)^\top \right)$
    \STATE $\log \beta^k \leftarrow (1-\lambda) \log \beta^{k-1} + \lambda \log \hat{\beta}^k$
    \IF{certificate requested}
        \STATE Compute gap $U - L$ via Proposition 3 (see Algorithm~\ref{alg:stable_upper_bound}, Appendix~\ref{app:dual_state_certificate})
        \STATE \textbf{if} $U - L \le \tau$ \textbf{then break}
    \ENDIF
\ENDFOR
\RETURN $(\log\alpha^k,\log\beta^k,
\log\hat{\alpha}^k,\log\hat{\beta}^k,
\log\widetilde{\beta}^{k-1},k)$
\end{algorithmic}
\end{algorithm}

\section{Experiments}
\label{sec:experiments}

We evaluate the two modifications that distinguish BDRS from annealed Sinkhorn, examine the contribution of overrelaxation, and assess the practical cost of certification. The DOTmark experiments use all 450 image pairs at resolution $64\times64$, with unit mass marginals and squared Euclidean costs normalized to $[0,1]$. Reference optimal values ($p^*$) are computed using POT's network simplex solver. Unless otherwise stated, we use log-domain implementations in \texttt{float64} on a single NVIDIA H200 GPU, set $\eta=1$, and allow up to $10^5$ iterations. Appendix~\ref{app:experiments} gives the preprocessing, implementation, and evaluation details.

\paragraph{Momentum and kernel ablation.}
We isolate the current row-update kernel and log-scaling momentum in a $2\times2$ ablation, and additionally compare against \ac{DAS} and O-\ac{BDRS} with $\lambda\in\{1.2,1.5,1.99\}$. Figure~1 shows that changing the kernel alone has little effect under the tested schedule. Momentum alone improves the result, but combining both modifications, as in BDRS, produces substantially smaller rounded primal optimality gaps. O-BDRS further improves the aggregate performance at fixed $\eta$. These comparisons use the inverse-linear temperature schedule intrinsic to BDRS; they do not compare individually optimized annealing schedules. Details and classwise results appear in Appendix~\ref{app:momentum-kernel-ablation} and Appendix~\ref{app:figures-for-momentum-and-kernel-ablation}.

\begin{figure}
    \centering
    \includegraphics[width=0.90\linewidth]{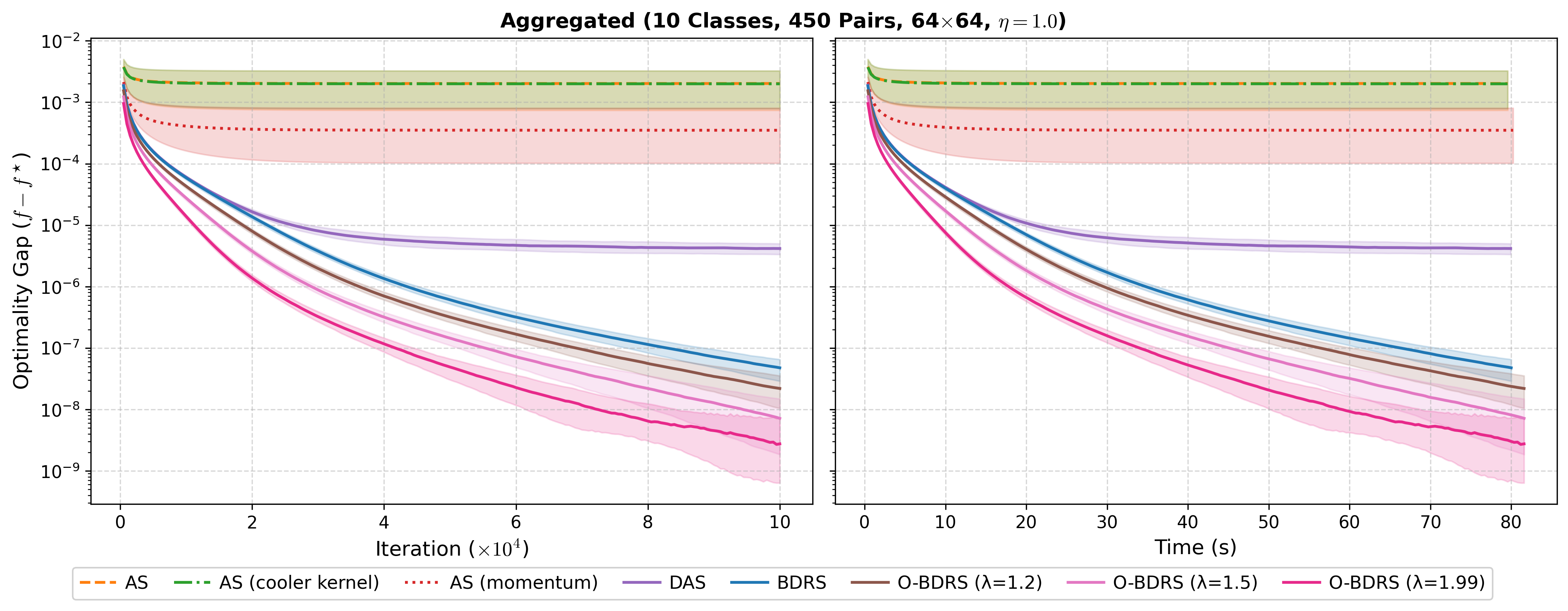}
    \caption{Momentum and kernel ablation on all 450 DOTmark pairs at resolution $64\times64$, with $\eta=1$. The optimality gap $f-f^\star$ is shown against iteration count (left) and median solver time, excluding feasibility rounding (right). Here $f$ is the cost after rounding and $f^\star$ is the reference value computed using POT's network simplex solver. Lines show medians across pairs; shaded bands indicate the interquartile range.}
    \label{fig:momentum-ablation-aggregated}
\end{figure}

\paragraph{What explains the gain from overrelaxation?}
The comparisons in Appendix~\ref{app:matched-temperature-ablation} indicate that much of the fixed-$\eta$ advantage is explained by reaching lower effective temperatures sooner. 
O-BDRS at approximately $k/\lambda$ iterations closely reproduces BDRS's rounded objective values at iteration $k$ at the later checkpoints. 
Lowering BDRS's initial $\eta$ to match O-BDRS's terminal update-kernel temperature largely removes the objective difference at the same iteration count. 
Thus, O-BDRS offers a reduction in iteration count at a common $\eta$, while these results do not establish a separate acceleration advantage over temperature-adjusted BDRS.
The relevant tables can be found in Table~2(a) and (b) in Appendix~\ref{app:matched-temperature-ablation}.

\paragraph{Effectiveness of the anytime certificate.}
This experiment uses a budget of $10^4$ iterations.
Figure~\ref{fig:delay-certificate-analysis} (left) shows that both methods reduce the certified gap during the run, with O-BDRS attaining a smaller aggregate certified gap at the later iterations. 
Of note is the region at $10^2$ iterations, where \ac{BDRS} actually has a slightly lower certified gap than O-\ac{BDRS} on average.
Figure~\ref{fig:delay-certificate-analysis} (right) shows the delay distributions and O-\ac{BDRS} arrives at a given tolerance earlier than \ac{BDRS} in general.
At $\tau=10^{-2}$, \ac{BDRS} typically requires roughly five to six times its oracle iteration count.
At $\tau=10^{-4}$, the difference widens again and the certification delay for \ac{BDRS} at this tolerance is not provided as the tolerance was not achieved by $10^4$ iterations.
For practitioners, this means that running O-\ac{BDRS} achieves the desired tolerance earlier than \ac{BDRS}. Appendix~\ref{app:effectiveness-of-anytime-certificate} gives the definitions, the experiment setup and classwise distributions.

\begin{figure}
    \centering
    \includegraphics[width=0.90\linewidth]{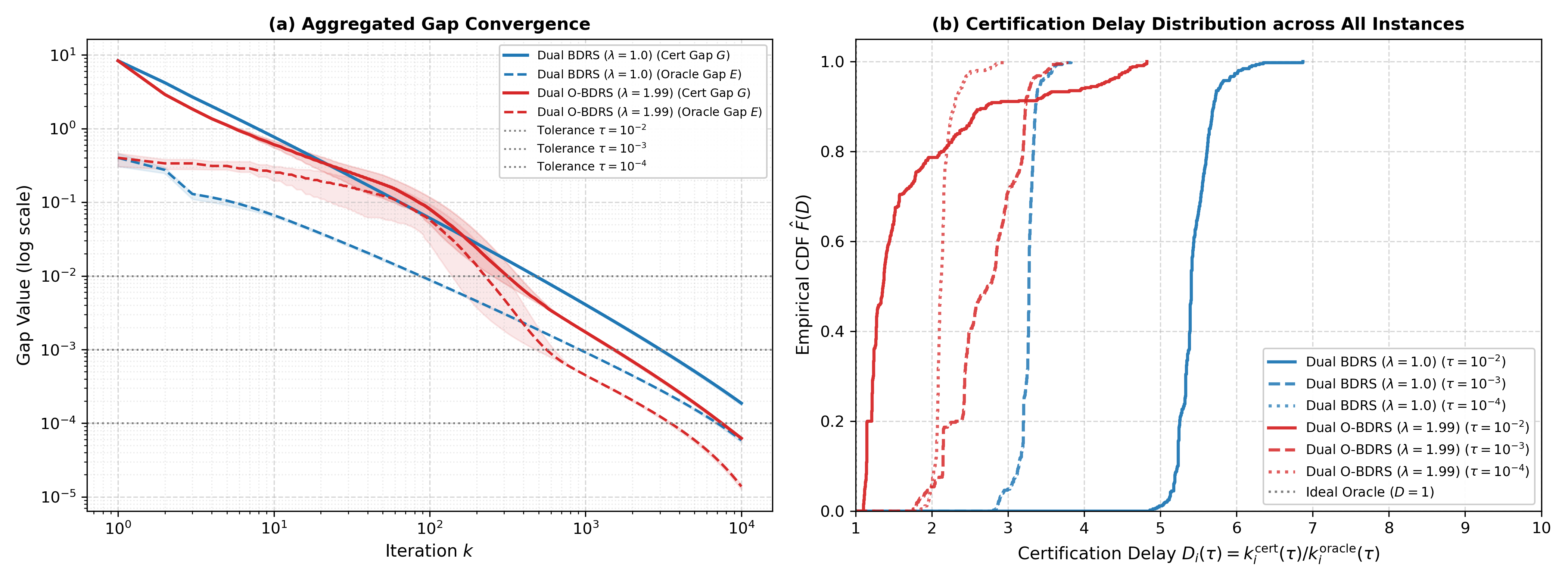}
    \caption{Anytime certification on 450 DOTmark pairs at resolution $64\times64$, comparing Dual BDRS ($\lambda=1$) and Dual O-BDRS ($\lambda=1.99$). Left: aggregated certified gaps $G^{\mathrm{cert}}=U^{\mathrm{best}}-L^{\mathrm{best}}$ (solid) and reference gaps $E^{\mathrm{oracle}}=U^{\mathrm{best}}-p^\star$ (dashed). 
    Right: empirical cumulative distributions of the certification delay $D_i(\tau)=k_i^{\mathrm{cert}}(\tau)/k_i^{\mathrm{oracle}}(\tau)$ for $\tau\in\{10^{-2},10^{-3},10^{-4}\}$.
    Smaller delays indicate closer agreement with each method's own oracle stopping rule.}
    \label{fig:delay-certificate-analysis}
\end{figure}

\paragraph{Large-Scale Pixel-Level Color Transfer}
The experiments here run in \texttt{float32} and use a single L40S GPU.
We compare with \ac{MDOT-TNT} \citep{Kemertas2025ATransport}, the current state-of-the-art in large-scale \ac{OT}, at $\gamma=2^{10}$ on a $10^6\times10^6$ \ac{OT} problem.
\ac{MDOT-TNT} required 3.5 hours and \ac{BDRS} finished in 24 minutes (9$\times$ speedup), while achieving a lower transport cost (0.0497 vs. 0.0507).
In addition, we run \ac{BDRS} on a $4238\times2365$ image, which translates to a $10^7\times10^7$ \ac{OT} problem, increasing the scale by an order of magnitude.
On this experiment, we ran \ac{BDRS} for 1000 iterations with $\eta=0.01$ over 35 hours and achieved a best duality gap of 2.41\%.
The outputs are shown in Figure~\ref{fig:color_transfer_bidirectional_4238x2365} and additional details are in Appendix~\ref{app:color-transfer}.

\begin{figure}[t]
    \centering

    \begin{minipage}[c]{0.04\linewidth}
        \raggedright
        \rotatebox{90}{\small Original Images}
    \end{minipage}%
    \hspace{0.01\linewidth}%
    \begin{minipage}[c]{0.44\linewidth}
        \includegraphics[width=\linewidth]{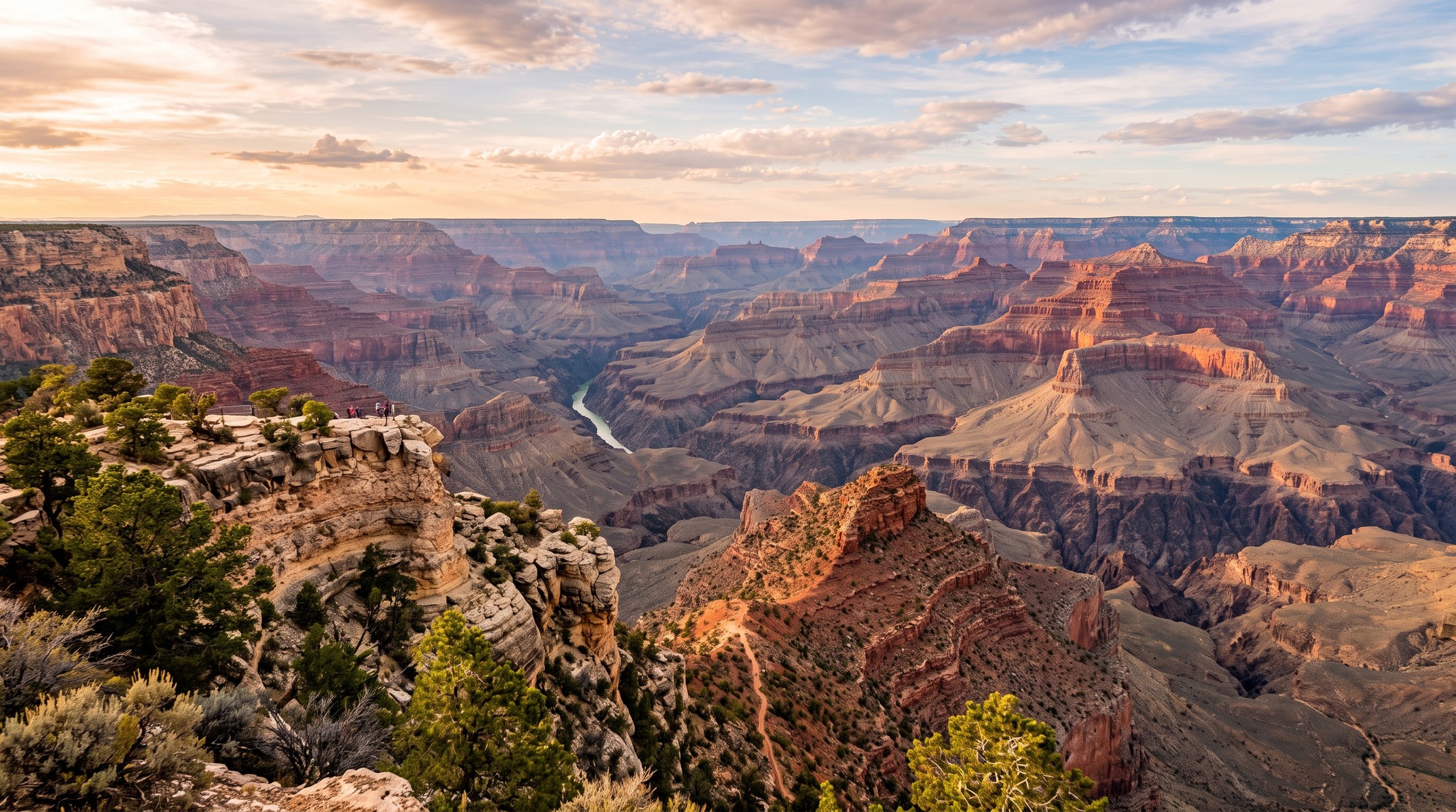}
    \end{minipage}%
    \hspace{0.01\linewidth}%
    \begin{minipage}[c]{0.44\linewidth}
        \includegraphics[width=\linewidth]{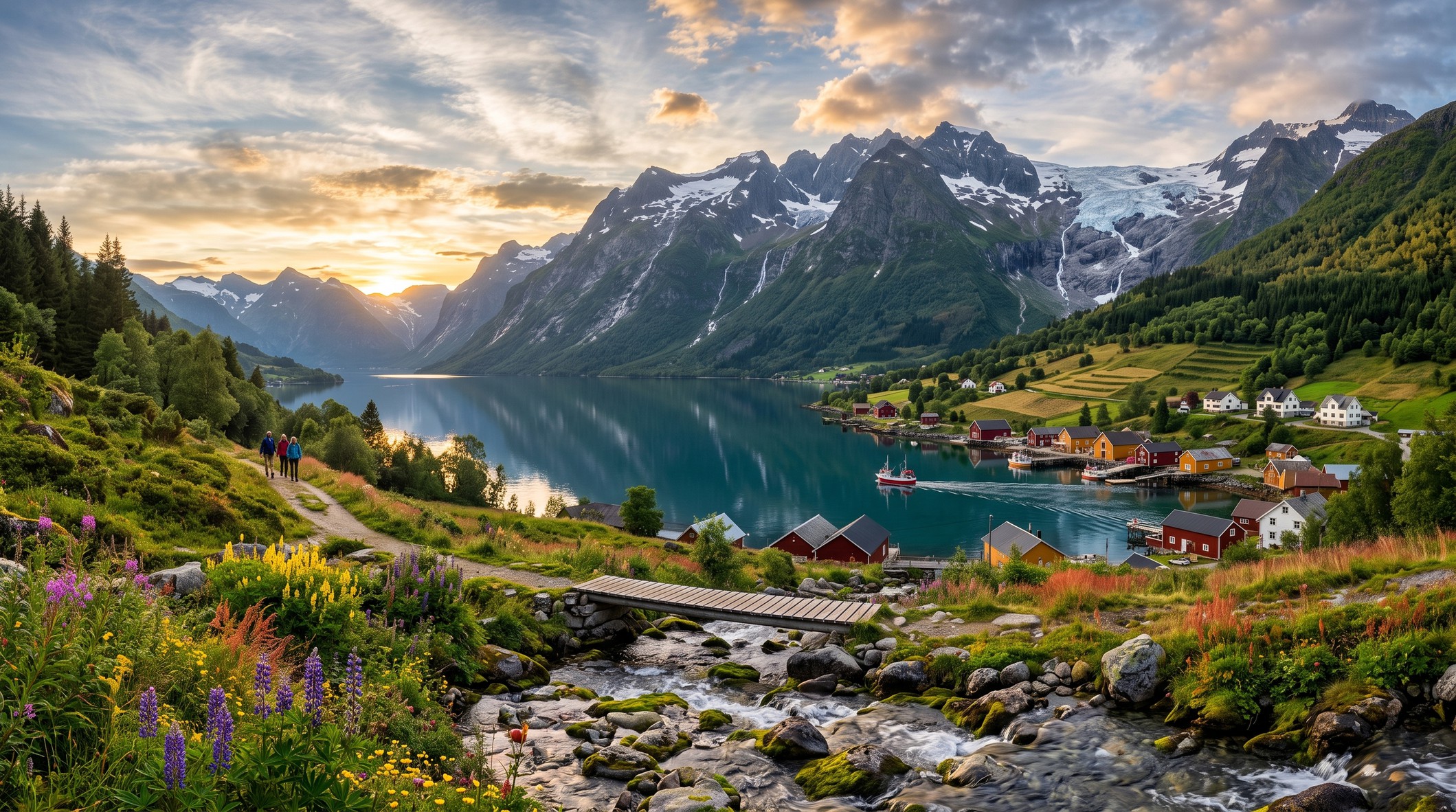}
    \end{minipage}

    \par\vspace{3pt}\nointerlineskip

    \begin{minipage}[c]{0.04\linewidth}
        \raggedright
        \rotatebox{90}{\small Color Transferred}
    \end{minipage}%
    \hspace{0.01\linewidth}%
    \begin{minipage}[c]{0.44\linewidth}
        \includegraphics[width=\linewidth]{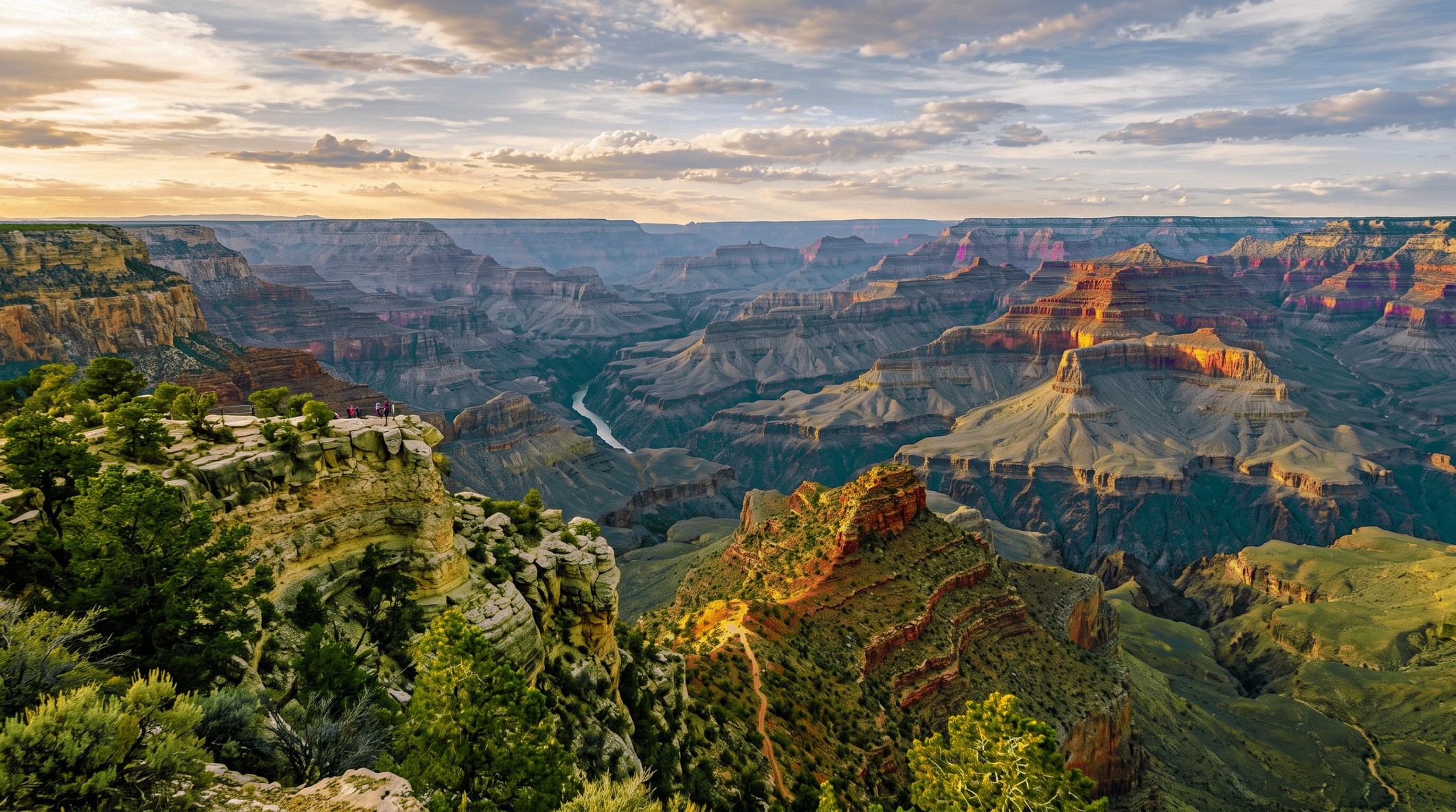}
    \end{minipage}%
    \hspace{0.01\linewidth}%
    \begin{minipage}[c]{0.44\linewidth}
        \includegraphics[width=\linewidth]{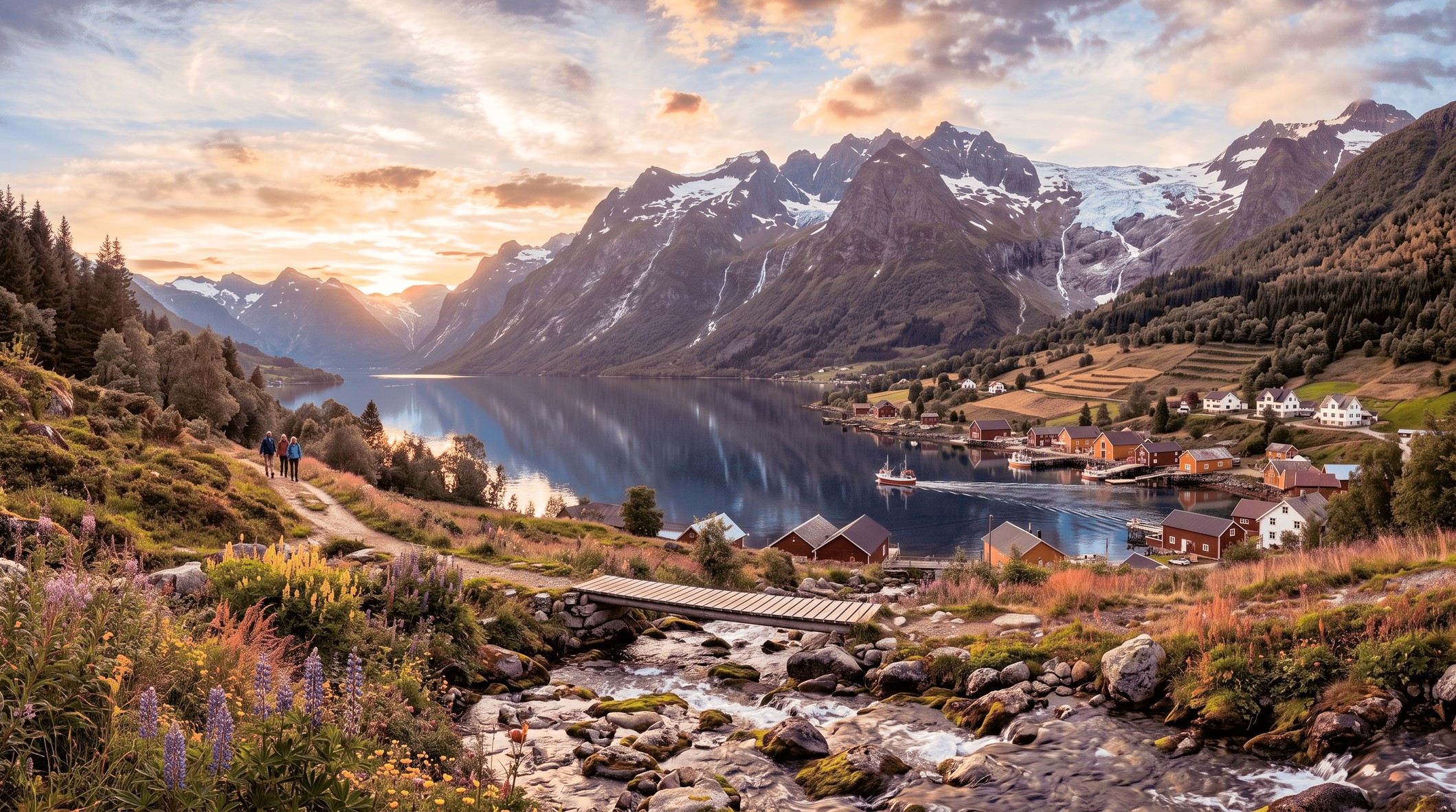}
    \end{minipage}

    \caption{Large-scale pixel-level color transfer between two $4238 \times 2365$ images using Dual BDRS. Top: original images A (left) and B (right). Bottom: transfers from A to B (left) and B to A (right).
    Each direction runs for 1000 iterations in \texttt{float32} on a single NVIDIA L40S GPU and takes approximately 35 hours. The best relative duality gaps $(U-L)/U$ are $2.41\%$ and $2.80\%$ respectively. Transferred images are obtained by barycentric projection.}
    \label{fig:color_transfer_bidirectional_4238x2365}
\end{figure}

\section{Conclusion}

We established that \ac{BDRS} is identical to \ac{IPOT} with a single inner Sinkhorn iteration and derived an equivalent dual formulation. 
This formulation reveals \ac{BDRS} as annealed Sinkhorn with log scaling momentum and a cooler kernel, while eliminating the need to store the primal transport plan, enabling linear memory.
We further introduced O-BDRS and a primal--dual certificate that can be evaluated in linear memory using the available scalings.
The certificate provides a computable stopping rule for unregularized \ac{OT} whenever the requested gap is reached.
The linear memory requirement of both the algorithm and the stopping rule thus enables unregularized \ac{OT} on dense matrices at scale.

The DOTmark experiments show that the two modifications of momentum and a cooler kernel underlying BDRS work together: neither modification alone reproduces the observed performance of the combined update. 
The matched-temperature experiments further indicate that much of O-BDRS's advantage at fixed $\eta$ is explained by faster cooling, since temperature-adjusted BDRS attains closely matching objective values. 
Together, these results clarify the relationship of unregularized \ac{OT} viewed through proximal point algorithms \citep{Xie2020ADistance}, operator splitting \citep{Ma2025BregmanMethod}, and annealed \citep{Chizat2024AnnealedDebiasing} and overrelaxed Sinkhorn \citep{Thibault2021OverrelaxedTransport} and provide a way to both compute the solution and monitor solution quality in linear memory.

\paragraph{Limitations.}
Our algebraic equivalences and certificate do not establish global convergence or an a-priori convergence rate for BDRS or O-BDRS. In particular, a valid certificate does not guarantee that a prescribed tolerance will be reached, and restricting $\lambda$ to $[1,2)$ does not by itself establish convergence. 
The ablations use the schedule intrinsic to \ac{BDRS} and O-\ac{BDRS} and do not establish superiority over independently optimized schedules. Linear memory also does not remove the $O(NM)$ pairwise work per iteration for dense problems with $N$ source and $M$ target atoms. Finally, the certificate is proved in exact arithmetic and thus its floating-point evaluation and attainable tolerances require numerical validation.

%% file: appendices/app_non_recursive_primal_update.tex
\section{Proof: Non-Recursive Primal Update}
\label{app:non-recursive-primal-update}

\nonrecursiveprimalupdate*

\begin{proof}
We prove by induction the equivalent index-shifted identity and assume the form holds for arbitrary $k$
\begin{equation*}
    X^k
    =
    \text{diag}(a^{k-1})
    K^{\odot k}
    \text{diag}(b^{k-1}),
    \qquad k\geq 0.
\end{equation*}
For the base case $k=0$, using
$K^{\odot 0}=\mathbf{1}_{m\times n}$, we have
\begin{equation*}
X^0 = \text{diag}(a^{-1}) K^{\odot 0} \text{diag}(b^{-1}) = \text{diag}(\mathbf{1}) K^{\odot 0} \text{diag}(\mathbf{1}) = \mathbf{1}_{m \times n} 
\end{equation*}
We substitute this assumed form into the intermediate core matrix $(X^k \odot K)$ which appears in the BDRS state update:
\begin{equation*}
    X^k \odot K = \left( \text{diag}(a^{k-1}) K^{\odot k} \text{diag}(b^{k-1}) \right) \odot K
\end{equation*}
Because pre- and post-multiplying by diagonal matrices only scales the rows and columns, an element-wise multiplication by $K$ passes through the diagonals and directly multiplies the inner kernel. Thus, $K^{\odot k} \odot K$ becomes $K^{\odot (k+1)}$:
\begin{equation*}
    X^k \odot K = \text{diag}(a^{k-1}) K^{\odot (k+1)} \text{diag}(b^{k-1}).
\end{equation*}
We now substitute this resolved core matrix directly into the state update \eqref{eq:bdrs_x} for the next step:
\begin{align*}
    X^{k+1} &= \text{diag}(u^k) \big( X^k \odot K \big) \text{diag}(v^k) \\
    &= \text{diag}(u^k) \Big[ \text{diag}(a^{k-1}) K^{\odot (k+1)} \text{diag}(b^{k-1}) \Big] \text{diag}(v^k).
\end{align*}
Matrix multiplication is associative, and the product of diagonal matrices is exactly the diagonal matrix of their element-wise product (i.e., $\text{diag}(a)\text{diag}(b) = \text{diag}(a \odot b)$). Regrouping the left and right terms yields:
\begin{equation*}
    X^{k+1} = \text{diag}(u^k \odot a^{k-1}) K^{\odot (k+1)} \text{diag}(b^{k-1} \odot v^k).
\end{equation*}
Applying the definition of the cumulative vectors, $a^k = a^{k-1} \odot u^k$ and $b^k = b^{k-1} \odot v^k$, we arrive at the exact unrolled form:
\begin{equation*}
    X^{k+1} = \text{diag}(a^k) K^{\odot (k+1)} \text{diag}(b^k).
\end{equation*}
This completes the induction. To show the equivalent representation, we can take the $(i,j)$th entry of
\eqref{eq:bdrs_x_non_recursive}:
\[
    X_{ij}^{k+1}
    =
    a_i^k
    \left(K_{ij}\right)^{k+1}
    b_j^k.
\]
Since $K_{ij}=\exp(-C_{ij}/\eta)$, we have
\[
    \left(K_{ij}\right)^{k+1}
    =
    \exp\left(-\frac{k+1}{\eta}C_{ij}\right).
\]
Therefore,
\[
    X_{ij}^{k+1}
    =
    \exp\left(
        \log a_i^k+\log b_j^k
        -\frac{k+1}{\eta}C_{ij}
    \right),
\]
which establishes the equivalent entrywise form.

\end{proof}
Because $K = \exp(-C/\eta)$, we have $K^{\odot (k+1)} = \exp(-(k+1) C/\eta)$. 
Lemma \ref{lem:non_recursive_primal_update} proves that the \ac{BDRS} state matrix implicitly uses an effective kernel $K^{\odot (k+1)} = \exp(-(k+1)C/\eta)$ and the recursive form of \eqref{eq:bdrs_x} can be substituted with the non-recursive form.

%% file: appendices/app_bdrs_ipot.tex
\section{BDRS Equivalence with IPOT and Relation to MDOT}
\label{app:bdrs_ipot}

The \ac{IPOT} algorithm uses the generalized proximal point method, which traditionally aims to minimize a convex objective function $f(x)$ by iteratively evaluating a proximal operator structured around a regularization term. In the context of optimal transport, IPOT tackles the exact Wasserstein distance by using a Bregman divergence—specifically, the Kullback-Leibler divergence generated by the entropy function.

By framing the primal transportation matrix as a Bregman proximal point method with the transportation polytope constraints of unregularized \ac{OT}, the subproblem at each iteration $t$ mirrors an entropic \ac{OT} problem and thus enables the use of the Sinkhorn algorithm. Solving this subproblem to convergence at every step is computationally expensive and \ac{IPOT} circumvents this by introducing an \textit{inexact} evaluation scheme that runs only a single Sinkhorn iteration ($L=1$).

Counterintuitively, this inexact primal approximation yields an update sequence that is identical to the \textit{exact} application of BDRS to the dual optimal transport problem \citep{Ma2025BregmanMethod}. BDRS requires no relaxation or truncation and its exact alternating updates produce the same sequence as IPOT with $L=1$.

\subsection{Equivalence to IPOT}
To explicitly demonstrate this link, we map the BDRS variables to their IPOT counterparts \citep[cf.][Algorithm~1]{Xie2020ADistance}: the marginals $r, c$ correspond to $\mu, \nu$; the cost kernel $K$ corresponds to $G$; the scaling vectors $u^k, v^k$ correspond to $a^{(t)}, b^{(t)}$; and the transport plan $X^k$ corresponds to $\Gamma^{(t)}$. As shown below, the single-iteration inner loop of IPOT is equivalent to the alternating dual updates of BDRS.

\vspace{1em}
\noindent
\begin{minipage}[t]{0.48\textwidth}
\textbf{BDRS (Operator Splitting)}
\begin{subequations}
\begin{align*}
    u^{k} &:= \frac{r}{(X^k \odot K)v^{k-1}} \\
    v^{k} &:= \frac{c}{(X^k \odot K)^\top u^k} \\
    X^{k+1} &:= \text{diag}(u^k)(X^k \odot K)\text{diag}(v^k)
\end{align*}
\end{subequations}
\end{minipage}
\hfill
\begin{minipage}[t]{0.48\textwidth}
\textbf{IPOT (Proximal Point Algorithm)}
\begin{subequations}
\begin{align*}
    a^{(t)} &:= \frac{\mu}{(\Gamma^{(t)} \odot G)b^{(t-1)}} \\
    b^{(t)} &:= \frac{\nu}{(\Gamma^{(t)} \odot G)^\top a^{(t)}}  \\
    \Gamma^{(t+1)} &:= \text{diag}(a^{(t)})(\Gamma^{(t)} \odot G)\text{diag}(b^{(t)})
\end{align*}
\end{subequations}
\end{minipage}
\vspace{1em}

This equivalence provides two complementary interpretations of the same matrix-scaling iteration.
Specifically, what appears as a truncated inner Sinkhorn solve with a single iteration in \ac{IPOT} is simultaneously a complete operator splitting iteration in \ac{BDRS}.
Though this equivalence was established, it is important not to immediately assume the convergence analysis can be directly applied.
The convergence theory from \citet{Xie2020ADistance} requires conditions on the accumulated error of the inner solve with Sinkhorn and are not shown to hold for the practical choice of $L=1$.
Nonetheless, we are hopeful that by drawing this connection, a convergence proof can eventually emerge in future.

\subsection{Relationship with MDOT}

Given the close connection between MDOT and IPOT \citep[cf.][Appendix~G]{Kemertas2025EfficientGradients}, the equivalence of BDRS to IPOT at $L=1$ connects BDRS and MDOT.
In particular, the unregularized \ac{OT} objective
\[
f(X)=\langle C,X\rangle
\]
is linear, so that $\nabla f(X^k)=C$. Consequently, the negative entropy \ac{MD} subproblem
\[
X^{k+1}\in\arg\min_{X\in\Pi(r,c)}
\left\{
\langle C,X\rangle
+\frac{1}{\Delta\gamma_k}
D_{\mathrm{KL}}(X|X^k)
\right\}
\]
coincides exactly with the KL-proximal subproblem underlying IPOT when $\Delta\gamma_k=1/\beta_k$. Therefore, under matched initialization and exact subproblem solutions, exact MDOT and exact IPOT trace the same entropic-regularization path, with $\gamma_{k+1}=\gamma_k+\Delta\gamma_k$.

While these three methods are similar, their equivalences should not be conflated. 
BDRS interprets the single Sinkhorn iteration as a complete operator splitting iteration and thus does not solve the subproblem exactly.
IPOT runs with a fixed $\beta$ and each subproblem is solved by a prescribed number of Sinkhorn iterations, commonly $L=1$.
MDOT specifies an inverse temperature schedule $\{\gamma_k\}$ and approximately solves each subproblem to a chosen accuracy, potentially using Sinkhorn or any convex optimization solver.
It is not surprising that these three methods are similar since it is well established that mirror descent, proximal point algorithms and operator splitting are all closely related.
The focus here is to connect \ac{BDRS}, IPOT and MDOT.

%% file: appendices/app_pure_dual_state_proof.tex
\section{Dual BDRS Extended Analysis}
\label{app:dual-bdrs-extended-analysis}

\subsection{Proof: Dual Formulation of BDRS}
\label{app:proof-pure-dual-bdrs}
In this section, we prove that BDRS can be represented in a dual formulation such that $X^k$ is not necessary in the intermediate steps and thus need not be persisted in memory. We first prove Lemma~\ref{lem:cum_duals}, and then use the results from Lemma~\ref{lem:non_recursive_primal_update} and ~\ref{lem:cum_duals} to prove Proposition~\ref{prop:dual_bdrs}.

For convenience, we reproduce the BDRS update equations for discrete \ac{OT} \citep{Ma2025BregmanMethod}:
\begin{align*}
u^{k} &:= \frac{r}{(X^k \odot K)v^{k-1}} \\
v^{k} &:= \frac{c}{(X^k \odot K)^\top u^k} \\
X^{k+1} &:= \text{diag}(u^k)(X^k \odot K)\text{diag}(v^k)
\end{align*}
where $K = \exp(-C/\eta)$ and $\odot$ denotes element-wise multiplication. In the following, we analyze the algorithm through the algebraic unrolling of the update equations. 

\begin{lemma}[Cumulative Dual Updates]
\label{lem:cum_duals}
Given the non-recursive form of $X^k$ from Lemma \ref{lem:non_recursive_primal_update}, the row and column updates in \ac{BDRS} can be rewritten as a sequence strictly over their cumulative counterparts:
\begin{subequations}
\begin{align}
    a^k &= \frac{r}{K^{\odot (k+1)} (b^{k-1} \odot v^{k-1})} \label{eq:lem_cum_u} \\
    b^k &= \frac{c}{(K^{\odot (k+1)})^\top a^k} \label{eq:lem_cum_v}
\end{align}
\end{subequations}
\end{lemma}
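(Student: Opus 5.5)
The plan is to substitute the non-recursive factorization of Lemma~\ref{lem:non_recursive_primal_update} into the BDRS row and column updates, and then use that diagonal scalings commute with elementwise products and with multiplication by diagonal matrices. In the index-shifted form used in its proof, the lemma gives $X^k=\operatorname{diag}(a^{k-1})K^{\odot k}\operatorname{diag}(b^{k-1})$ for all $k\ge 0$. As in that proof, the elementwise product with $K$ passes through the diagonal scalings, so
\begin{equation*}
X^k\odot K=\operatorname{diag}(a^{k-1})\,K^{\odot(k+1)}\operatorname{diag}(b^{k-1}).
\end{equation*}
This single identity is the only input needed for both updates.

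\emph{Row update.} I substitute into \eqref{eq:bdrs_u}. Since $\operatorname{diag}(b^{k-1})v^{k-1}=b^{k-1}\odot v^{k-1}$, this gives
\begin{equation*}
u^k=\frac{r}{a^{k-1}\odot K^{\odot(k+1)}(b^{k-1}\odot v^{k-1})}.
\end{equation*}
Multiplying both sides elementwise by $a^{k-1}$ and using $a^k=a^{k-1}\odot u^k$ cancels the factor $a^{k-1}$ in the denominator, which yields \eqref{eq:lem_cum_u}. All quantities are strictly positive because $K>0$, $r,c>0$, and the initializations are $\mathbf{1}$, so every division is legitimate.

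\emph{Column update.} I substitute into \eqref{eq:bdrs_v}. The transpose is $(X^k\odot K)^\top=\operatorname{diag}(b^{k-1})(K^{\odot(k+1)})^\top\operatorname{diag}(a^{k-1})$, and applying it to $u^k$ produces $\operatorname{diag}(a^{k-1})u^k=a^k$. This gives
\begin{equation*}
v^k=\frac{c}{b^{k-1}\odot (K^{\odot(k+1)})^\top a^k}.
\end{equation*}
Multiplying by $b^{k-1}$ and using $b^k=b^{k-1}\odot v^k$ gives \eqref{eq:lem_cum_v}.

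The argument is purely algebraic, so I do not expect a real obstacle. The one step that needs care is the index bookkeeping at $k=0$, where $a^{-1}=b^{-1}=v^{-1}=\mathbf{1}$ and $X^0=\mathbf{1}$ must be checked to agree with the formulas; with these conventions the identities hold directly. I would also note, in preparation for Proposition~\ref{prop:dual_bdrs}, that $v^{k-1}=b^{k-1}\oslash b^{k-2}$. This turns $b^{k-1}\odot v^{k-1}$ into $(b^{k-1})^{\odot 2}\oslash b^{k-2}$, provided $b^{-2}=\mathbf{1}$ is set consistently.
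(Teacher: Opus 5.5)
Your proposal is correct and takes essentially the same route as the paper: you substitute $X^k\odot K=\operatorname{diag}(a^{k-1})K^{\odot(k+1)}\operatorname{diag}(b^{k-1})$ from Lemma~\ref{lem:non_recursive_primal_update} into the BDRS updates, then absorb the diagonal factor into the cumulative scaling. You also write out the column update explicitly, noting that $\operatorname{diag}(a^{k-1})u^k=a^k$ brings in the current row scaling, which is a detail the paper leaves to ``an identical substitution and factorization.''
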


\begin{proof}
From Lemma~\ref{lem:non_recursive_primal_update},
\[
X^{k+1} = \text{diag}(a^{k}) K^{\odot k+1} \text{diag}(b^{k}),
\]
therefore, 
\[
X^k \odot K = \text{diag}(a^{k-1}) K^{\odot (k+1)} \text{diag}(b^{k-1})
\]
and substituting into the row update $u^k$, we obtain:
\begin{equation*}
    u^k = \frac{r}{\left[ \text{diag}(a^{k-1}) K^{\odot (k+1)} \text{diag}(b^{k-1}) \right] v^{k-1}}.
\end{equation*}
In the denominator, multiplying the diagonal matrix $\text{diag}(b^{k-1})$ by the column vector $v^{k-1}$ simplifies to the element-wise product $b^{k-1} \odot v^{k-1}$. This yields:
\begin{equation*}
    u^k = \frac{r}{\text{diag}(a^{k-1}) \left[ K^{\odot (k+1)} (b^{k-1} \odot v^{k-1}) \right]}.
\end{equation*}
Because pre-multiplying by the diagonal matrix $\text{diag}(a^{k-1})$ scales the rows of the resulting denominator vector, dividing the numerator $r$ by this scaled vector is mathematically equivalent to multiplying both sides of the equation element-wise by $a^{k-1}$:
\begin{equation*}
    a^{k-1} \odot u^k = \frac{r}{K^{\odot (k+1)} (b^{k-1} \odot v^{k-1})} \\
\end{equation*}
Recognizing by definition that $a^{k-1} \odot u^k = a^k$ yields exactly \eqref{eq:lem_cum_u}. Applying an identical substitution and factorization to the column update $v^k$ yields \eqref{eq:lem_cum_v}, completing the proof.
\end{proof}

By isolating the dynamics in Lemma \ref{lem:cum_duals}, a profound structural reality of \ac{BDRS} emerges. The intermediate matrix $X^k$ is entirely redundant, and though not obvious now, tracking the step-wise vectors ($v^{k-1}$) alongside the cumulative vectors is unnecessary. We can now formalize the complete reduction of \ac{BDRS} into a sequence of alternating projections analogous to the Sinkhorn algorithm, revealing its exact equivalence with annealed Sinkhorn.

\bdrsprop*

\begin{proof}
By definition, the step-wise dual vector $v^{k-1}$ is the element-wise ratio between the current and previous cumulative vectors:
\begin{equation}
    v^{k-1} = \frac{b^{k-1}}{b^{k-2}} \label{eq:app_v_ratio}
\end{equation}
We substitute this ratio into the target denominator term from Lemma \ref{lem:cum_duals}:
\begin{equation}
    b^{k-1} \odot v^{k-1} = b^{k-1} \odot \left( \frac{b^{k-1}}{b^{k-2}} \right) = \frac{(b^{k-1})^{\odot 2}}{b^{k-2}} \label{eq:app_target_sub}
\end{equation}
Next, we substitute the resolved term from \eqref{eq:app_target_sub} directly into the cumulative row update \eqref{eq:lem_cum_u}. This isolates the update rule so that it depends exclusively on the sequence of cumulative column vectors:
\begin{equation}
    a^k = \frac{r}{K^{\odot (k+1)} \left[ \frac{(b^{k-1})^{\odot 2}}{b^{k-2}} \right]} \label{eq:app_cum_u_isolated}
\end{equation}
The cumulative column update \eqref{eq:lem_cum_v} already relies entirely on the cumulative vectors, remaining unchanged:
\begin{equation}
    b^k = \frac{c}{(K^{\odot (k+1)})^\top a^k} \label{eq:app_cum_v_isolated}
\end{equation}
At this stage, the step-wise variables $u$ and $v$ have been eliminated from the system. Because the alternating dynamics of \eqref{eq:app_cum_u_isolated} and \eqref{eq:app_cum_v_isolated} are now entirely self-contained, the cumulative origin of $a$ and $b$ is no longer relevant to the forward progress of the algorithm, though it still inherently contains the property of being cumulative vectors. In other words, we can easily substitute $a$ and $b$ with $u$ and $v$ to obtain a form analogous to the Sinkhorn algorithm. Finally, we substitute the unrolled state equation from Lemma~\ref{lem:non_recursive_primal_update} and the proof is complete.
\end{proof}

We further verified dual \ac{BDRS} computationally and the iterates match up to machine precision with primal-dual \ac{BDRS}.
This reformulation preserves the iterates of \ac{BDRS} and warm-started \ac{IPOT} with $L=1$ and removes the need for the dense primal state.
In Appendix~\ref{app:dual_state_certificate}, when costs are evaluated implicitly, both the updates and the primal--dual certificate can be computed in linear memory, without materializing the transport plan or its feasible repair.

\subsection{Annealed Sinkhorn Detailed Comparison}
\label{app:annealed-sinkhorn-comparison}
\paragraph{Annealed Sinkhorn.} \citet{Chizat2024AnnealedDebiasing} analyzes standard annealed Sinkhorn and show that a temperature sequence of $\eta_k = \Theta(1/\sqrt{k})$ balances the entropic error and relaxation error.
Furthermore, under specific conditions, annealed Sinkhorn converges to unregularized \ac{OT}.
The sequence of updates proceeds as follows:
\begin{subequations}
\label{eq:annealed_sinkhorn}
\begin{align}
    u^{k} &= \frac{r}{K_{k-1} v^{k-1}} \label{eq:annealed_u} \\
    v^{k} &= \frac{c}{K_{k}^\top u^{k}} \label{eq:annealed_v} \\
    X^{k+1} &= \text{diag}(u^k) K_k \text{diag}(v^k). \label{eq:annealed_state}
\end{align}
\end{subequations}
Notably, if one were to prescribe an inverse linear temperature schedule where $\eta_k = \eta/(k + 1)$, then the kernel at step $k$ is given by: 
\begin{equation}
    K_k = \exp\left(-\frac{C}{\eta_k}\right) = \exp\left(-\frac{(k+1)C}{\eta}\right) = \left[\exp\left(-\frac{C}{\eta}\right)\right]^{\odot (k+1)} = K^{\odot (k+1)}, \label{eq:K_k}
\end{equation}
with $k=0$ as the initial step and the $\odot$ symbol to denote element-wise (Hadamard) operations. Because the exponential function maps $C$ element-wise, scaling $\eta$ by a factor of $\frac{1}{k+1}$ is equivalent to raising the base kernel $K$ to the $(k+1)$-th Hadamard power.
This notation reveals a crucial property: annealing the temperature inverse linearly is identical to repeatedly multiplying the kernel $K$ by itself element-wise. Under this specific schedule, substituting $K_k$ with $K^{\odot (k+1)}$ in \eqref{eq:annealed_state} is equivalent to \eqref{eq:dual_bdrs_x}.
We now compare annealed Sinkhorn with dual \ac{BDRS} side-by-side:
\par\smallskip
\noindent
\begin{minipage}[t]{0.48\textwidth}
    \centering
    \textbf{Annealed Sinkhorn}
    \begin{align*}
        u^{k} &= \frac{r}{K^{\odot k} v^{k-1}} \vphantom{\frac{r}{K^{\odot (k+1)} \left[ \frac{(v^{k-1})^{\odot 2}}{v^{k-2}} \right]}} \\
        v^{k} &= \frac{c}{(K^{\odot (k+1)})^\top u^{k}} \\
        X^{k+1} &= \text{diag}(u^k) K^{\odot (k+1)} \text{diag}(v^k)
    \end{align*}
\end{minipage}%
\hfill
\vrule
\hfill
\begin{minipage}[t]{0.48\textwidth}
    \centering
    \textbf{Dual BDRS}
    \begin{align*}
        u^{k} &= \frac{r}{K^{\odot (k+1)} \left[ \frac{(v^{k-1})^{\odot 2}}{v^{k-2}} \right]} \\
        v^{k} &= \frac{c}{(K^{\odot (k+1)})^\top u^{k}} \\
        X^{k+1} &= \text{diag}(u^k) K^{\odot (k+1)} \text{diag}(v^k)
    \end{align*}
\end{minipage}
\par\smallskip
Corollary~\ref{cor:bdrs_momentum} then immediately follows.

\paragraph{Debiased Annealed Sinkhorn.}
\citet{Chizat2024AnnealedDebiasing} further proposes \emph{Debiased Annealed Sinkhorn}, which modifies the row-scaling update to reduce the relaxation error induced by annealing. Define
\begin{equation*}
\theta_k
:=
1-\frac{\eta_{k-1}}{\eta_{(k-2)\vee 0}}
=
\begin{cases}
0, & k=1,\\[2mm]
\dfrac{1}{k}, & k\geq 2.
\end{cases}
\end{equation*}
Under this schedule, Debiased Annealed Sinkhorn becomes
\begin{align*}
u^{k}
=
\frac{
\left(u^{k-1}\right)^{\odot\theta_k}
\odot r
}{
K^{\odot k}v^{k-1}
}, \quad\quad
v^{k}
=
\frac{
c
}{
\left(K^{\odot(k+1)}\right)^\top u^{k}
}, \quad\quad
X^{k+1}
=
\operatorname{diag}(u^k)
K^{\odot(k+1)}
\operatorname{diag}(v^k).
\end{align*}
Here, \(a\vee b:=\max\{a,b\}\), so \((k-2)\vee0\) prevents the temperature schedule from being evaluated at a negative index. The factor \(\left(u^{k-1}\right)^{\odot\theta_k}\) is the debiasing term: it modifies $r$ using the previous scaling vector, with a strength determined by the relative decrease in temperature. This modification is designed to reduce the leading relaxation error introduced by annealing. At \(k=1\), \(\theta_1=0\), so the correction is inactive and the numerator reduces to \(r\).

%% file: appendices/app_a_bdrs_update.tex
\section{Overrelaxed BDRS Update and relation to BPRS}
\label{app:a-bdrs-update}
In the following, we restructure the BDRS state update:
$$X^{k+1} = \text{diag}(u^k)(X^k \odot K)\text{diag}(v^k)$$
such that the multiplicative nature in some form of an update rule with respect to $X^k$ is revealed.
Because pre-multiplying a matrix by a diagonal matrix scales its rows, and post-multiplying scales its columns, this operation is equivalent to taking the element-wise (Hadamard) product of the inner matrix with the outer product of the scaling vectors. Applying this identity yields:
\[X^{k+1} = (X^k \odot K) \odot \big( u^k (v^k)^\top \big)\]
Because the Hadamard product is both associative and commutative, we can regroup these terms to isolate the previous state:
\[X^{k+1} = X^k \odot \Big( K \odot u^k (v^k)^\top \Big)\]
In this form, the underlying mechanics of the algorithm become immediately transparent. 
The update simply takes the previous state $X^k$ and applies an element-wise multiplication against a step-wise correction term, defined as $\big(K \odot u^k (v^k)^\top\big)$.
Building directly on this geometric insight, we can introduce a parameter $\lambda \in (1, 2)$ to explicitly tune the magnitude of this correction. By element-wise exponentiating the correction term, we can produce an accelerated update:
$$X^{k+1} = X^k \odot \Big( K \odot u^k (v^k)^\top \Big)^{\odot \lambda}$$

Interestingly, this update equation has a relation to \ac{PR}. The \ac{PR} equations with half steps, extracted from Theorem 4.2 in \citet{Ma2025BregmanMethod} are given as follows:
\begin{subequations}
\begin{align}
u^{k} &:= \frac{r}{(X^k \odot K)v^{k-1}} \label{eq:bprs_u_half} \\
X^{k+\frac{1}{2}} &:= \text{diag}(u^k)(X^k \odot K)\text{diag}(v^{k-1}) \label{eq:bprs_x_half} \\
v^{k} &:= \frac{c}{(X^{k+\frac{1}{2}} \odot K)^\top u^k} \label{eq:bprs_v_half} \\
X^{k+1} &:= \text{diag}(u^k)(X^{k+\frac{1}{2}} \odot K)\text{diag}(v^k) \label{eq:bprs_x_full}
\end{align}
\end{subequations}

The half-step equations can be reduced to a single step equation as follows:
\begin{subequations}
\begin{align}
u^{k} &:= \frac{r}{(X^k \odot K)v^{k-1}} \label{eq:bprs_u_free} \\
v^{k} &:= \frac{c}{v^{k-1} \odot \left( (X^k \odot K^{\odot 2})^\top (u^k)^{\odot 2} \right)} \label{eq:bprs_v_free} \\
X^{k+1} &:= \text{diag}((u^k)^{\odot 2}) \left( X^k \odot K^{\odot 2} \right) \text{diag}(v^{k-1} \odot v^k) \label{eq:bprs_x_free}
\end{align}
\end{subequations}

It turns out that \eqref{eq:bprs_x_free} can be rewritten as follows:
\[X^{k+1} = X^k \odot \Big( K \odot u^k (v^{k-1})^\top \Big)^{\odot (\lambda-1)} \odot \Big( K \odot u^k (v^k)^\top \Big)\]
such that $\lambda=1$ yields the primal update rule for BDRS and $\lambda=2$ yields the primal update rule for BPRS.
However, the update rule that we propose, guided by the insights of faster annealing, is somewhat similar to BPRS, with the only exception of $v^{k-1}$ replaced by $v^k$ in the rewrite of \eqref{eq:bprs_x_free} to obtain:
\[X^{k+1} = X^k \odot \Big( K \odot u^k (v^{k})^\top \Big)^{\odot (\lambda-1)} \odot \Big( K \odot u^k (v^k)^\top \Big)\]

At $\lambda=1$, the proposed update reduces to BDRS. At $\lambda=2$, it differs from the BPRS state update as described above. 
This similarity motivates further investigation of methods that potentially interpolate between BDRS and BPRS.

%% file: appendices/app_pure_dual_a_bdrs_proof.tex
\section{Proof: Dual Overrelaxed BDRS}
\label{app:proof-pure-dual-a-bdrs}

Recall from Section \ref{sec:overrelaxed_bdrs} and Appendix \ref{app:a-bdrs-update} that the factored BDRS primal update isolates a step-wise correction term, $(K \odot u^k (v^k)^\top)$. We accelerate this sequence by introducing an overrelaxation parameter $\lambda \in (1, 2)$, which corresponds to the element-wise exponentiation of this correction term:
\begin{equation*}
    X^{k+1} = X^k \odot \Big( K \odot u^k (v^k)^\top \Big)^{\odot \lambda}
\end{equation*}
This is the only change for A-BDRS and the dual updates remain the same. In the following, we restate Proposition \ref{prop:dual-o-bdrs} and the proof follows.

\accbdrsprop*

\begin{proof}
We define the cumulative row and column scaling vectors exponentiated by $\lambda$ up to step $k$ as follows:
\begin{equation*}
    \alpha^k = \bigodot_{t=0}^k (u^t)^{\odot \lambda} \quad \text{and} \quad \beta^k = \bigodot_{t=0}^k (v^t)^{\odot \lambda}
\end{equation*}

Assuming the same initialization where the primal state is a matrix of ones ($X^0 = \mathbf{1}_{m \times n}$), the overrelaxed state matrix unrolls inductively to carry an accelerated kernel exponent:
\begin{equation*}
    X^k = \text{diag}(\alpha^{k-1}) K^{\odot \lambda k} \text{diag}(\beta^{k-1})
\end{equation*}

When evaluating the local row and column projections, the algorithm evaluates the core matrix $(X^k \odot K)$. Substituting the unrolled state matrix into this core evaluation yields:
\begin{equation*}
    X^k \odot K = \text{diag}(\alpha^{k-1}) K^{\odot (\lambda k + 1)} \text{diag}(\beta^{k-1})
\end{equation*}

This intermediate result is critical: it proves that the $\lambda$-exponentiated primal update forces the algorithm to evaluate its projections against a cooling kernel accelerated by $\lambda$, merging overrelaxation with an accelerated annealing schedule.

To eliminate the dense matrix $X^k$ from the algorithmic loop, we must express the step-wise projections strictly in terms of the cumulative vectors. From our definition of $\beta^k$, the previous cumulative vector is updated via $\beta^{k-1} = \beta^{k-2} \odot (v^{k-1})^{\odot \lambda}$. By isolating the step-wise multiplier $v^{k-1}$, we obtain:
\begin{equation*}
    v^{k-1} = \left( \frac{\beta^{k-1}}{\beta^{k-2}} \right)^{\odot \frac{1}{\lambda}}
\end{equation*}

We now substitute the unrolled core matrix and the isolated $v^{k-1}$ multiplier into the denominator of the standard row projection, $u^k = r / ((X^k \odot K)v^{k-1})$:
\begin{equation*}
    u^k = \frac{r}{\alpha^{k-1} \odot \left[ K^{\odot (\lambda k + 1)} \left( \beta^{k-1} \odot \left( \frac{\beta^{k-1}}{\beta^{k-2}} \right)^{\odot \frac{1}{\lambda}} \right) \right]}
\end{equation*}

To execute the cumulative update $\alpha^k = \alpha^{k-1} \odot (u^k)^{\odot \lambda}$, we must raise this entire projection to the power of $\lambda$ and multiply it element-wise with the previous state $\alpha^{k-1}$. When the denominator is exponentiated, the isolated $\alpha^{k-1}$ scaling term becomes $(\alpha^{k-1})^{\odot \lambda}$. Dividing the outer $\alpha^{k-1}$ multiplier by this exponentiated denominator naturally produces the geometric averaging term $(\alpha^{k-1})^{\odot (1-\lambda)}$. This algebraic simplification results in the final unrolled row update:
\begin{equation*}
    \alpha^{k} = (\alpha^{k-1})^{\odot (1-\lambda)} \odot \left( \frac{r}{K^{\odot (\lambda k + 1)} \left[ \beta^{k-1} \odot \left(\frac{\beta^{k-1}}{\beta^{k-2}}\right)^{\odot \frac{1}{\lambda}} \right]} \right)^{\odot \lambda}
\end{equation*}

By applying the symmetric algebraic substitutions to the standard column projection, $v^k = c / ((X^k \odot K)^{\top}u^{k})$, we obtain the complementary column update:
\begin{equation*}
    \beta^{k} = (\beta^{k-1})^{\odot (1-\lambda)} \odot \left( \frac{c}{(K^{\odot (\lambda k + 1)})^\top \left[ \alpha^{k-1} \odot \left(\frac{\alpha^{k}}{\alpha^{k-1}}\right)^{\odot \frac{1}{\lambda}} \right]} \right)^{\odot \lambda}
\end{equation*}

We note that substituting $\lambda = 1$ reduces the geometric averaging exponents $(1-\lambda)$ to zero and the momentum exponents $1/\lambda$ to one, perfectly recovering the dual BDRS formulation. Again, the choice of variables $\alpha$ and $\beta$ is arbitrary and $u$ and $v$ can be used to obtain a form that is analogous to overrelaxed and annealed Sinkhorn. At this point, the complete equations for dual O-BDRS are given as follows:
\begin{subequations}
\begin{align}
    \alpha^{k} &= (\alpha^{k-1})^{\odot (1-\lambda)} \odot \left( \frac{r}{K^{\odot (\lambda k + 1)} \left[ \beta^{k-1} \odot \left(\frac{\beta^{k-1}}{\beta^{k-2}}\right)^{\odot \frac{1}{\lambda}} \right]} \right)^{\odot \lambda} \label{eq:dual-o-bdrs-alpha} \\
    \beta^{k} &= (\beta^{k-1})^{\odot (1-\lambda)} \odot \left( \frac{c}{(K^{\odot (\lambda k + 1)})^\top \left[ \alpha^{k-1} \odot \left(\frac{\alpha^{k}}{\alpha^{k-1}}\right)^{\odot \frac{1}{\lambda}} \right]} \right)^{\odot \lambda} \label{eq:dual-o-bdrs-beta} \\
    X^{k+1} &= \text{diag}(\alpha^k) K^{\odot \lambda(k+1)} \text{diag}(\beta^k) \label{eq:dual-o-bdrs-x}
\end{align}
\end{subequations}
Next, we show how the recursion can be reduced further to the stated form. Recall the definitions
\begin{align}
K_k^{\mathrm O}
&:=
K^{\odot(\lambda k+1)},
&
\widetilde{\beta}^{k-1}
&:=
\beta^{k-1}
\odot
\left(
\beta^{k-1}\oslash\beta^{k-2}
\right)^{\odot 1/\lambda},
\notag\
\\
\widehat{\alpha}^{k}
&:=
r\oslash
\left(
K_k^{\mathrm O}\widetilde{\beta}^{k-1}
\right),
&
\widehat{\beta}^{k}
&:=
c\oslash
\left(
(K_k^{\mathrm O})^\top\widehat{\alpha}^{k}
\right).
\notag
\end{align}

Substituting $\widehat{\alpha}^k$ into equation~\ref{eq:dual-o-bdrs-alpha} directly gives
\begin{align*}
    \alpha^k
    &=
    (\alpha^{k-1})^{\odot(1-\lambda)}
    \odot
    \left[
        r\oslash
        \left(
            K_k^{\mathrm{O}}\widetilde \beta^{k-1}
        \right)
    \right]^{\odot\lambda}\\
    &=
    (\alpha^{k-1})^{\odot(1-\lambda)}
    \odot
    (\widehat \alpha^k)^{\odot\lambda},
\end{align*}
which immediately proves the first equality in
\eqref{eq:obdrs_overrelaxation}.
It remains to simplify the vector appearing inside the column update
\eqref{eq:dual-o-bdrs-beta}. Define
\begin{equation*}
    s^k
    :=
    \alpha^{k-1}
    \odot
    \left(
        \alpha^k\oslash \alpha^{k-1}
    \right)^{\odot1/\lambda}.
\end{equation*}
We show that $s^k=\widehat \alpha^k$. From the overrelaxation derived
above,
\begin{equation*}
    \alpha^k
    =
    (\alpha^{k-1})^{\odot(1-\lambda)}
    \odot
    (\widehat \alpha^k)^{\odot\lambda}.
\end{equation*}
Therefore,
\begin{align*}
    \alpha^k\oslash \alpha^{k-1}
    &=
    (\alpha^{k-1})^{\odot(1-\lambda)}
    \odot
    (\widehat \alpha^k)^{\odot\lambda}
    \oslash \alpha^{k-1}\\
    &=
    (\alpha^{k-1})^{\odot(-\lambda)}
    \odot
    (\widehat \alpha^k)^{\odot\lambda}\\
    &=
    \left(
        \widehat \alpha^k\oslash \alpha^{k-1}
    \right)^{\odot\lambda}.
\end{align*}
All entries are strictly positive, so taking the elementwise
$1/\lambda$ power yields
\begin{equation*}
    \left(
        \alpha^k\oslash \alpha^{k-1}
    \right)^{\odot1/\lambda}
    =
    \widehat \alpha^k\oslash \alpha^{k-1}.
\end{equation*}
Consequently,
\begin{align}
    s^k
    &=
    \alpha^{k-1}
    \odot
    \left(
        \widehat \alpha^k\oslash \alpha^{k-1}
    \right)
    =
    \widehat \alpha^k.
    \label{eq:obdrs_row_substitution}
\end{align}
Substituting \eqref{eq:obdrs_row_substitution} into the original column
update \eqref{eq:dual-o-bdrs-beta} gives
\begin{align*}
    \beta^k
    &=
    (\beta^{k-1})^{\odot(1-\lambda)}
    \odot
    \left[
        c\oslash
        \left(
            (K_k^{\mathrm{O}})^\top\widehat \alpha^k
        \right)
    \right]^{\odot\lambda}\\
    &=
    (\beta^{k-1})^{\odot(1-\lambda)}
    \odot
    (\widehat \beta^k)^{\odot\lambda},
\end{align*}
which immediately proves the second equality in \eqref{eq:obdrs_overrelaxation}.
Finally, when $\lambda=1$, we have
\[
    K_k^{\mathrm{O}}=K^{\odot(k+1)},
    \qquad
    \widetilde b^{k-1}
    =
    (b^{k-1})^{\odot2}\oslash b^{k-2},
\]
and \eqref{eq:obdrs_overrelaxation} reduces to
$\alpha^k=\widehat \alpha^k$ and $\beta^k=\widehat \beta^k$. These are exactly the
dual BDRS updates.
\end{proof}

%% file: appendices/app_dual_state_certificate.tex
\section{Proof and Evaluation of the Primal--Dual Certificate}
\label{app:dual_state_certificate}

We first restate Proposition~\ref{prop:dual_state_certificate} here for convenience.

\primaldualcertificate*

We prove Proposition~\ref{prop:dual_state_certificate} using the
normalization identities in \eqref{eq:cert_normalizations}, reproduced here for convenience:
\begin{equation*}
    A=r\oslash(K_\varepsilon t),
    \qquad
    B=c\oslash(K_\varepsilon^\top A).
\end{equation*}
These identities hold for both choices in \eqref{eq:cert_shared_scalings},
so the same argument applies to BDRS and O-BDRS, reproduced here for convenience:
\begin{equation*}
\begin{array}{c|cccc}
    \text{Method} & A & B & t & \varepsilon \\
    \hline
    \text{BDRS}
    & a^k & b^k & \widetilde b^{k-1} & \eta/(k+1) \\
    \text{O-BDRS}
    & \widehat\alpha^k & \widehat\beta^k
    & \widetilde\beta^{k-1} & \eta/(\lambda k+1)
\end{array}
\end{equation*}

\begin{proof}[Proof of Proposition~\ref{prop:dual_state_certificate}]
We first identify the marginals of the intermediate plan, then establish
the dual lower bound and construct a feasible plan for the primal upper
bound.

\paragraph{Intermediate marginals.}
From $A=r\oslash(K_\varepsilon t)$, the plan
$Z=\operatorname{diag}(A)K_\varepsilon\operatorname{diag}(t)$ satisfies
\begin{equation}
    Z\mathbf{1}_n=A\odot(K_\varepsilon t)=r.
    \label{eq:cert_proof_rows}
\end{equation}
Its column marginal is
\begin{equation}
    Z^\top\mathbf{1}_m
    =t\odot(K_\varepsilon^\top A)
    =c\odot t\oslash B=s,
    \label{eq:cert_proof_columns}
\end{equation}
where the final equality uses the column update for $B$.
All entries of $Z$ and $s$ are positive. Since the row marginal is $r$,
$Z$ has total mass one, and hence
\begin{equation}
    \mathbf{1}_n^\top s
    =\mathbf{1}_m^\top r
    =\mathbf{1}_n^\top c=1.
    \label{eq:cert_equal_mass}
\end{equation}

\paragraph{Dual lower bound.}
Taking the $(i,j)$th entry of $Z$ gives
\[
    Z_{ij}=A_i\exp(-C_{ij}/\varepsilon)t_j.
\]
Because $s_j=c_jt_j/B_j$, we obtain
\begin{align}
    f_i+g_j-C_{ij}
    &=\varepsilon\left(
        \log A_i+\log\frac{B_j}{c_j}
        -\frac{C_{ij}}{\varepsilon}
      \right)\nonumber\\
    &=\varepsilon\log\frac{Z_{ij}}{s_j}
    \leq 0.
    \label{eq:cert_dual_feasibility_proof}
\end{align}
The inequality follows because $s_j=\sum_i Z_{ij}$, so every entry in
column $j$ is at most its column sum. Thus, $(f,g)$ is dual feasible,
and weak duality gives $L\leq p^\star$.

\paragraph{Feasible primal repair.}
We note that this is simply a one-sided repair that can be adapted from \citet{Altschuler2017Near-linearIteration} and state this in full for convenience and for the analysis later.
Since the row marginal of $Z$ already equals $r$, we now restore the column
marginal to $c$. 
Since $s$ and $c$ have equal total mass,
\begin{equation}
    \sum_j(s_j-c_j)_+
    =\sum_j(c_j-s_j)_+
    =\frac12\|s-c\|_1=\delta,
    \label{eq:cert_surplus_deficit}
\end{equation}
where $(x)_+:=\max\{x,0\}$.
If $\delta=0$, then $s=c$ and $Z$ is already feasible; set
$\widehat Z=Z$. Suppose now that $\delta>0$.

First, clip each column whose mass exceeds its target. Define
\begin{equation}
    e_j:=\min\left\{1,\frac{c_j}{s_j}\right\},
    \qquad
    \widetilde Z:=Z\operatorname{diag}(e).
    \label{eq:cert_column_clipping}
\end{equation}
Then $\widetilde Z\leq Z$ entrywise and the mass in column $j$ becomes
$\min\{s_j,c_j\}$. This removes exactly $\delta$ units of mass.
Define the remaining row and column deficits by
\begin{equation}
    p:=r-\widetilde Z\mathbf{1}_n,
    \qquad
    q:=c-\widetilde Z^\top\mathbf{1}_m.
    \label{eq:cert_repair_deficits}
\end{equation}
Both vectors are nonnegative: column clipping only decreases the row
masses, and every clipped column has mass at most its target.
Moreover, $q_j=(c_j-s_j)_+$ and
\begin{equation}
    \mathbf{1}_m^\top p
    =\mathbf{1}_n^\top q
    =1-\sum_j\min\{s_j,c_j\}
    =\delta.
    \label{eq:cert_deficit_mass}
\end{equation}

To fill these deficits, distribute the missing mass $q_j$ in each
column among the rows in proportions $p_i/\delta$. This gives
\begin{equation}
    \widehat Z:=\widetilde Z+\frac{pq^\top}{\delta}.
    \label{eq:cert_repaired_plan}
\end{equation}
Indeed, the correction has row marginal $p$ and column marginal $q$:
\[
    \frac{pq^\top}{\delta}\mathbf{1}_n=p,
    \qquad
    \left(\frac{pq^\top}{\delta}\right)^\top\mathbf{1}_m=q.
\]
Consequently, $\widehat Z\geq 0$,
$\widehat Z\mathbf{1}_n=r$, and
$\widehat Z^\top\mathbf{1}_m=c$, so
$\widehat Z\in\Pi(r,c)$.

\paragraph{Primal upper bound.}
Let $R:=Z-\widetilde Z$ denote the mass removed by clipping and let
$T:=pq^\top/\delta$ denote the correction. Both matrices are nonnegative
and have total mass $\delta$. Writing
$C_{\min}:=\min_{i,j}C_{ij}$ and
$C_{\max}:=\max_{i,j}C_{ij}$ therefore gives
\begin{align}
    \langle C,\widehat Z-Z\rangle
    &=\langle C,T\rangle-\langle C,R\rangle\nonumber\\
    &\leq C_{\max}\delta-C_{\min}\delta\nonumber\\
    &=\operatorname{osc}(C)\delta
    \leq\Omega\delta.
    \label{eq:cert_repair_cost}
\end{align}
In words, the repair relocates $\delta$ units of mass, and relocating
one unit increases the cost by at most the range of $C$.
For $\delta=0$, the same inequality holds because $\widehat Z=Z$.

Finally, the cost of the intermediate plan is
\begin{align}
    \langle C,Z\rangle
    &=\sum_j t_j\sum_i C_{ij}A_i(K_\varepsilon)_{ij}\nonumber\\
    &=\langle t,(C\odot K_\varepsilon)^\top A\rangle
      =\langle t,w\rangle.
    \label{eq:cert_implicit_cost}
\end{align}
Since $\widehat Z$ is feasible, we conclude that
\[
    L\leq p^\star
    \leq\langle C,\widehat Z\rangle
    \leq\langle C,Z\rangle+\Omega\delta
    =U.
\]
Subtracting $p^\star$ and using $L\leq p^\star$ proves
\eqref{eq:cert_gap}.
\end{proof}

\paragraph{Evaluating the cost in the log domain.}
Key to the primal upper bound is computing the variable $w$.
While it is simple to state mathematically
\[
 w:=(C\odot K_\varepsilon)^\top A,
\]
and with the indexed form
\begin{align*}
    w_j
    &=\sum_i C_{ij}A_i\exp(-C_{ij}/\varepsilon)\\
    &=\sum_i C_{ij}
        \exp\left(\log A_i-\frac{C_{ij}}{\varepsilon}\right).
\end{align*}
Our implementation is already in the log domain, and thus $\log A_i$ is readily available.
However, directly exponentiating this expression can still cause underflow, even when the entries of $A$ sum to one and the costs lie in $[0,1]$. For example, consider two rows with $A_1=A_2=1/2$ and a column $j$ with $C_{1j}=C_{2j}=1$. At $\varepsilon=10^{-3}$, both exponential weights are \[ \exp\left(\log A_i-\frac{C_{ij}}{\varepsilon}\right) =\frac12 e^{-1000}, \qquad i\in\{1,2\}. \] These values underflow to zero in both single and double precision, so normalizing them by their sum would produce $0/0$. Yet their relative contributions are equal: each should receive half the weight when computing the mean cost within this column. We can preserve these relative contributions by subtracting the largest log weight before exponentiating. In this example, both shifted logarithms become zero, giving weights $(1,1)$, which normalize to $(1/2,1/2)$. 
This common rescaling leaves the weighted mean cost unchanged and motivates the reformulation of the computation of interest, $\langle t,w\rangle$, which we discuss next.

Recall that $s_j=c_jt_j/B_j$.
Using the column update
$B_j=c_j/(K_\varepsilon^\top A)_j$, we obtain
\[
    s_j
    =\frac{c_jt_j}{B_j}
    =t_j(K_\varepsilon^\top A)_j
    =\sum_i Z_{ij}.
\]
Thus $s_j$ is the total mass in column $j$ of the intermediate
plan $Z$.
Define $\mu_j$ as the mean transport cost within this column.
Substituting $Z_{ij}=A_i(K_\varepsilon)_{ij}t_j$ gives
\begin{align*}
    \mu_j
    &:=\frac{\sum_i C_{ij}Z_{ij}}{s_j}\\
    &=\frac{\sum_i C_{ij}A_i(K_\varepsilon)_{ij}t_j}
            {t_j(K_\varepsilon^\top A)_j}\\
    &=\frac{\sum_i C_{ij}A_i(K_\varepsilon)_{ij}}
            {(K_\varepsilon^\top A)_j}\\
    &=\frac{w_j}{(K_\varepsilon^\top A)_j}.
\end{align*}
Multiplying the mean cost by the column mass therefore gives
$s_j\mu_j=t_jw_j$.
Summing over columns, we obtain
\[
    \langle t,w\rangle
    =\langle s,\mu\rangle
    =\langle C,Z\rangle.
\]

The column masses $s$ are readily computed from the available
log scalings.
We now turn to $\mu$.
We first rewrite its numerator $w_j$ and define
\[
    \ell_{ij}:=\log A_i-\frac{C_{ij}}{\varepsilon},
    \qquad
    M_j:=\max_i\ell_{ij}.
\]
Then, for each column $j$, we subtract $M_j$ from every log weight.
Since $M_j$ does not depend on the row index $i$, we can factor
out $e^{M_j}$ to obtain
\begin{align*}
    w_j
    &=e^{M_j}\sum_i C_{ij}\exp(\ell_{ij}-M_j),\\
    (K_\varepsilon^\top A)_j
    &=\sum_i\exp(\ell_{ij})
      =e^{M_j}\sum_i\exp(\ell_{ij}-M_j),
\end{align*}
and thus achieve the necessary rescaling effect.
However, the common factor $e^{M_j}$ still carries the original
scale and can underflow to zero.
In the example above, $M_j=\log(1/2)-1000$, so
$e^{M_j}=\tfrac12e^{-1000}$ is numerically zero.
Multiplying the stabilized sums by this factor would therefore
reintroduce the same $0/0$ problem when computing $\mu_j$.
We avoid this by canceling $e^{M_j}$ algebraically between the
numerator and denominator, so this factor is never evaluated.

Starting from the definition of $\mu_j$, we obtain
\begin{align}
    \mu_j
    &=\frac{w_j}{(K_\varepsilon^\top A)_j}
    \notag\\
    &=\frac{\sum_i C_{ij}A_i(K_\varepsilon)_{ij}}
            {\sum_i A_i(K_\varepsilon)_{ij}}
    \notag\\
    &=\frac{\sum_i C_{ij}\exp(\ell_{ij})}
            {\sum_i\exp(\ell_{ij})}
    \notag\\
    &=\frac{\sum_i C_{ij}e^{M_j}\exp(\ell_{ij}-M_j)}
            {\sum_i e^{M_j}\exp(\ell_{ij}-M_j)}
    \notag\\
    &=\frac{e^{M_j}\sum_i C_{ij}\exp(\ell_{ij}-M_j)}
            {e^{M_j}\sum_i\exp(\ell_{ij}-M_j)}
    \notag\\
    &=\frac{\sum_i C_{ij}\exp(\ell_{ij}-M_j)}
            {\sum_i\exp(\ell_{ij}-M_j)}.
    \label{eq:cert_column_mean_cost}
\end{align}
Here we first expand $w_j$ and the column sum, then use
$A_i(K_\varepsilon)_{ij}=\exp(\ell_{ij})$.
Since $M_j$ is constant within column $j$, the factor $e^{M_j}$
can be taken outside both sums and canceled.
We evaluate the final expression directly, so neither $e^{M_j}$
nor $w_j$ is formed numerically.

Define the shifted column sum, which is essentially the denominator of $\mu_j$ as
\[
    D_j:=\sum_i\exp(\ell_{ij}-M_j).
\]
Observe that $1\leq D_j\leq m$, since at least one shifted exponential equals $1$ and every term lies in $[0,1]$. Thus, when the log inputs are finite, the denominator cannot underflow to zero. Moreover, $\mu_j$ is a weighted average of the costs in column $j$, and therefore
$$
\min_i C_{ij}\leq \mu_j\leq \max_i C_{ij}.
$$
The computation of $\mu$ is thus stable.

Now, we address the computation of $s$. Recall
\[
    (K_\varepsilon^\top A)_j=e^{M_j}D_j.
\]
Taking logarithms, we obtain
\begin{align}
    d_j
    &:=\log\bigl((K_\varepsilon^\top A)_j\bigr)
    \notag\\
    &=\log\left(e^{M_j}D_j\right)
    \notag\\
    &=\log\left(e^{M_j}\right)+\log D_j
    \notag\\
    &=M_j+\log D_j.
    \label{eq:cert_log_column_sum}
\end{align}
Finally, we have
\begin{align*}
    s_j
    &=t_j(K_\varepsilon^\top A)_j\\
    &=\exp\left(
        \log t_j+\log((K_\varepsilon^\top A)_j)
    \right)\\
    &=\exp(\log t_j+d_j).
\end{align*}
The transport cost is then obtained as
\[
    \langle C,Z\rangle=\sum_j s_j\mu_j.
\]
To summarize, the primal upper bound can be evaluated using
Algorithm~\ref{alg:stable_upper_bound}.
Here $A$ and $t$ are the scalings defining the row feasible
intermediate plan $Z$; for O-BDRS, $A$ is the unrelaxed row scaling.

\begin{algorithm}[t]
\caption{Log-domain evaluation of the primal upper bound}
\label{alg:stable_upper_bound}
\begin{algorithmic}[1]
\REQUIRE Log scalings $\log A,\log t$, marginal $c$,
temperature $\varepsilon>0$, access to costs $C_{ij}$,
and $\Omega\geq\operatorname{osc}(C)$.
\FOR{$j=1,\ldots,n$}
    \STATE $\ell_i \gets \log A_i-C_{ij}/\varepsilon$
    for $i=1,\ldots,m$
    \STATE $M_j \gets \max_i \ell_i$
    \STATE $E_i \gets \exp(\ell_i-M_j)$
    for $i=1,\ldots,m$
    \STATE $D_j \gets \sum_i E_i$,
    \quad $N_j \gets \sum_i C_{ij}E_i$
    \STATE $\mu_j \gets N_j/D_j$
    \STATE $d_j \gets M_j+\log D_j$
    \STATE $s_j \gets \exp(\log t_j+d_j)$
\ENDFOR
\STATE $\delta \gets \frac12\sum_j |s_j-c_j|$
\STATE $U \gets \sum_j s_j\mu_j+\Omega\delta$
\RETURN $U$
\end{algorithmic}
\end{algorithm}

%% file: appendices/app_experiments.tex
\section{Experiment Details}
\label{app:experiments}

\paragraph{Data and ground cost.}
We use the DOTmark benchmark \citep{Schrieber2016DotmarkaTransport}, which
contains ten classes of ten images at each resolution. For every class,
we evaluate all $\binom{10}{2}=45$ unordered image pairs, yielding 450
instances per resolution. Each image is normalized to have total mass
one and we do not remove zero mass pixels. For an $s\times s$ image, let
$z_i\in\{0,\ldots,s-1\}^2$ denote the coordinate of pixel $i$.
Following DOTmark, we use the squared Euclidean ground cost on the pixel grid.
We additionally normalize the ground cost by the maximum cost as follows:
\[
    C_{ij}
    =
    \frac{\lVert z_i-z_j\rVert_2^2}{2(s-1)^2}.
\]
Consequently, $C_{ij}\in[0,1]$ and $\lVert C\rVert_\infty=1$.
The main experiments use $64\times64$ instances, for which we compute the optimal $p^\star_{\mathrm{LP}}$ with the network simplex implementation in \ac{POT} \citep{Flamary2021POT:Transport} with $10^7$ maximum iterations, and verify successful termination on every instance.

\paragraph{Methods.}
Our main comparison is with \ac{AS}, reflecting the paper's focus on understanding \ac{BDRS} through its connection to annealed Sinkhorn.
We include standard \ac{AS}, \ac{AS} with a cooler kernel, \ac{AS} with momentum, and the debiased variant \citep{Chizat2024AnnealedDebiasing}.
For the large-scale color-transfer experiment, we additionally compare with \ac{MDOT-TNT} \citep{Kemertas2025ATransport}, a state-of-the-art second-order method with demonstrated scalability to approximately one million pixels per image.

\paragraph{Implementation.}
All methods use the log domain implementation for numerical stability.
Lazy pairwise reductions are only used for the large-scale experiments.
Computations use float64 and a single H200 GPU for all experiments except for the large-scale color transfer task, where we opt to use float32 and a single L40S GPU to show the broad applicability of our method.
We observed that the first run of any method normally has a longer runtime of a few milliseconds, and thus warm start the GPUs.
Following the spirit of \ac{BDRS}, where $\eta$ does not have to be tuned, we simply set $\eta=1$ for all experiments except for the color transfer task and maximum iterations to $10^5$.
For the color transfer task, we tuned $\eta$ as we wanted to obtain lower duality gaps and the details are provided in Section~\ref{app:color-transfer}.
The goal is to analyze full scale \ac{OT} and thus zero mass values in the DOTmark benchmark are not removed, and we give those pixels a small mass of $1e^{-15}$.

\subsection{Momentum and Kernel Ablation}
\label{app:momentum-kernel-ablation}
We isolate the two modifications that distinguish \ac{BDRS} from standard \ac{AS}.
We reproduce the equation in Corollary~\ref{cor:bdrs_momentum} for convenience:
\begin{equation*}
    \underbrace{
        \frac{r}{K^{\odot k}b^{k-1}}
    }_{\text{Annealed Sinkhorn}}
    \quad\longrightarrow\quad
    \underbrace{
        \frac{r}{
            K^{\odot(k+1)}
            \left[(b^{k-1})^{\odot 2}\oslash b^{k-2}\right]
        }
    }_{\text{Dual BDRS}}.
\end{equation*}
The first modification replaces the old kernel
$K^{\odot k}$ in the row update with the current kernel
$K^{\odot(k+1)}$. The second replaces $b^{k-1}$ with the
unit log-scaling extrapolation
\begin{equation*}
    \widetilde b^{k-1}
    :=
    (b^{k-1})^{\odot 2}\oslash b^{k-2}.
\end{equation*}
The resulting controlled ablations form the $2\times2$ design in
Table~\ref{tab:momentum_kernel_ablation}. We additionally compare against \ac{DAS}, whose row update under the
same inverse-linear temperature schedule is
\begin{equation*}
    u^k
    =
    \frac{
        (u^{k-1})^{\odot\theta_k}\odot r
    }{
        K^{\odot k}v^{k-1}
    },
    \qquad
    \theta_k
    =
    \begin{cases}
        0, & k=1,\\[1mm]
        1/k, & k\geq 2.
    \end{cases}
\end{equation*}
This method retains the old kernel and does not extrapolate the
incoming column scaling; instead, it modifies the row scaling target
using the previous row scaling.
Note that in this ablation, we deliberately use the inverse-linear temperature schedule that is exactly induced by dual \ac{BDRS}, rather than a schedule selected to optimize \ac{AS} or \ac{DAS}.
In particular, the inverse temperature increment does not vanish so the convergence conditions established by \citet{Chizat2024AnnealedDebiasing} do not apply to this linear schedule.
Accordingly, the experiment here should be interpreted as a controlled comparison of Table~\ref{tab:momentum_kernel_ablation} and \ac{DAS} under schedule intrinsic to \ac{BDRS}, and not a claim that \ac{BDRS} dominates annealed methods under their individually optimized schedules.
Replacing this schedule would define a different method that is no longer \ac{BDRS} or O-\ac{BDRS} studied here.
We leave the design and analysis of such schedule modified variants to future work.

In this ablation, we ran a total of eight methods: four methods in Table~\ref{tab:momentum_kernel_ablation}, \ac{DAS}, and O-\ac{BDRS} with $\lambda=[1.2, 1.5, 1.99]$ in float64 precision.
Each method was run for 100K iterations and rounding from \citet{Altschuler2017Near-linearIteration} was used at every iteration to compute $f$, the cost of the feasible primal transport plan and $f^*$ was obtained from \ac{POT}.
For wall-clock timing, the time it took to round the result was excluded.
Figure~\ref{fig:momentum-ablation-aggregated} shows that replacing the cooler kernel alone has little effect under this schedule. Adding momentum alone reduces the gap, but combining momentum with the cooler kernel, as in \ac{BDRS}, produces substantially smaller errors. In addition, \ac{DAS} \citep{Chizat2024AnnealedDebiasing} does not outperform \ac{BDRS}. O-\ac{BDRS} further lowers the aggregate rounded primal gaps at the fixed $\eta$. The corresponding classwise results are shown in Figures~\ref{fig:momentum-ablation-cauchy-classic}--\ref{fig:momentum-ablation-shapes-noise} in Appendix~\ref{app:figures-for-momentum-and-kernel-ablation}.

\begin{table}[t]
\centering
\caption{Controlled ablation of kernel timing and log-scaling
momentum. All variants share the same column update, annealing
schedule, initialization, and transport plan reconstruction formula.}
\label{tab:momentum_kernel_ablation}
\small
\begin{tabular}{lcc}
\toprule
Row-update kernel
& No momentum: $b^{k-1}$
& Momentum: $\widetilde b^{k-1}$ \\
\midrule
Old: $K^{\odot k}$
& Annealed Sinkhorn
& Annealed Sinkhorn with momentum \\
Current: $K^{\odot(k+1)}$
& Current-kernel Annealed Sinkhorn
& Dual BDRS \\
\bottomrule
\end{tabular}
\end{table}

\subsection{Matched Temperature Ablation}
\label{app:matched-temperature-ablation}

The fixed-$\eta$ comparison in Appendix~\ref{app:momentum-kernel-ablation} combines two effects: O-BDRS changes the scaling dynamics and reaches lower effective temperatures sooner. We now examine how much of its observed objective improvement remains when the kernel temperatures are matched. We use the same 450 DOTmark pairs, numerical precision, initialization, and feasibility rounding as in Appendix~\ref{app:momentum-kernel-ablation}, and consider $\lambda\in\{1.2,1.5,1.99\}$.

For BDRS and O-BDRS, respectively, the temperatures of the kernels used in the scaling updates are
\[
    \varepsilon_k^{\mathrm{B}}
    =\frac{\eta_{\mathrm{B}}}{k+1},
    \qquad
    \varepsilon_k^{\mathrm{O}}
    =\frac{\eta_{\mathrm{O}}}{\lambda k+1}.
\]
We compare the rounded feasible-plan costs using
\[
    \Delta_i
    =\left|f_{\mathrm{O},i}-f_{\mathrm{B},i}\right|,
\]
and report the mean and standard deviation of $\Delta_i$ across instances in Table~\ref{tab:matched_temperature_exact}. This measures agreement between the two objective values, rather than either method's error relative to the optimum.

\paragraph{Regime 1: common initial temperature and approximately matched effective temperatures.}
We keep $\eta_{\mathrm{B}}=\eta_{\mathrm{O}}=1$ and compare BDRS at iteration $k$ with O-BDRS at $k_{\mathrm{O}}=\lfloor k/\lambda\rfloor$. This asks whether O-BDRS can attain a comparable objective value using approximately $1/\lambda$ as many iterations. The update-kernel temperatures are close, but are exactly equal only when $\lambda k_{\mathrm{O}}=k$. In particular,
\[
    \frac{\varepsilon_{k_{\mathrm{O}}}^{\mathrm{O}}}
         {\varepsilon_k^{\mathrm{B}}}-1
    =\frac{k-\lambda k_{\mathrm{O}}}{\lambda k_{\mathrm{O}}+1},
    \qquad
    0\leq k-\lambda k_{\mathrm{O}}<\lambda.
\]
Thus, the relative temperature mismatch decreases with the iteration budget.

Table~2(a) shows that the rounded objective values become closely aligned at the later checkpoints. At $k=10^3$, the mean discrepancies are of order $10^{-4}$, whereas at $k=10^5$ they range from $2.3\times10^{-11}$ to $9.0\times10^{-10}$. For $\lambda=1.99$, the final comparison uses $50{,}251$ O-BDRS iterations against $100{,}000$ BDRS iterations. This supports an approximately twofold reduction in iteration count for comparable late-stage objective values under the common initial $\eta$; it does not by itself measure wall-clock or certified-stopping speedup.

\paragraph{Regime 2: matched terminal temperature at a common iteration budget.}
We fix $\eta_{\mathrm{O}}=1$ and lower BDRS's initial parameter to
\[
    \eta_{\mathrm{B}}
    =\frac{T+1}{\lambda T+1},
    \qquad T=10^5.
\]
Both methods are evaluated at the same iteration $k$, and their kernel temperatures agree exactly at the terminal checkpoint $T$. Their earlier temperature schedules are not identical. This comparison asks whether adjusting BDRS's initial temperature reproduces the objective improvement seen with overrelaxation.

Table~2(b) shows very small objective discrepancies, with terminal means ranging from $1.8\times10^{-11}$ to $1.8\times10^{-9}$. The agreement is already close at the earlier checkpoints, although the discrepancy and its variation are larger for $\lambda=1.99$ than for the smaller relaxation parameters at several checkpoints. These results indicate that much of O-BDRS's fixed-$\eta$ objective advantage can be reproduced by choosing a smaller initial temperature for BDRS.

\paragraph{Interpretation.}
The two regimes support a common explanation: O-BDRS reaches lower effective temperatures sooner, which accounts for much of its observed improvement at a fixed initial $\eta$. When $\eta$ is held common across methods, overrelaxation provides a practical reduction in iteration count. When $\eta$ is adjusted to approximately align the cooling schedules, the rounded objective values become very similar. This is an empirical observation about the tested instances and checkpoints, rather than an equivalence of transport plans, numerical stability, or certification behavior. It motivates separating temperature selection from the effect of the update dynamics in future comparisons and in the tuned color-transfer experiment below.

\begin{table}[t]
\centering
\caption{Rounded transport-cost discrepancies under temperature matching on 450 DOTmark pairs at resolution $64\times64$. Entries report the mean and standard deviation of $\Delta_i=|f_{\mathrm{O},i}-f_{\mathrm{B},i}|$, where both costs are evaluated after feasibility rounding. (a) A common initial $\eta=1$ with approximately matched kernel temperatures at $k$ and $\lfloor k/\lambda\rfloor$. (b) A smaller initial $\eta$ for BDRS, chosen to match O-BDRS's update-kernel temperature exactly at the terminal checkpoint $T=10^5$, with both methods evaluated at the same iteration.}
\label{tab:matched_temperature_exact}
\small

\textbf{(a) Common initial temperature and approximately matched effective temperatures} \\[3pt]
\begin{tabular}{r | ccc}
\toprule
\textbf{BDRS ($k$)} 
  & $\bm{\lambda = 1.2}$ \scriptsize ($k_{\text{O}} = \lfloor k / 1.2 \rfloor$) 
  & $\bm{\lambda = 1.5}$ \scriptsize ($k_{\text{O}} = \lfloor k / 1.5 \rfloor$) 
  & $\bm{\lambda = 1.99}$ \scriptsize ($k_{\text{O}} = \lfloor k / 1.99 \rfloor$) \\
\midrule
$1{,}000$ & $(1.0 \pm 0.0) \times 10^{-4}$ & $(1.0 \pm 0.0) \times 10^{-4}$ & $(7.3 \pm 9.2) \times 10^{-5}$ \\
$10{,}000$ & $(8.3 \pm 0.4) \times 10^{-8}$ & $(1.3 \pm 0.1) \times 10^{-7}$ & $(4.5 \pm 0.2) \times 10^{-8}$ \\
$50{,}000$ & $(2.2 \pm 0.5) \times 10^{-10}$ & $(3.4 \pm 0.7) \times 10^{-10}$ & $(9.3 \pm 6.5) \times 10^{-10}$ \\
$100{,}000$ & $(2.3 \pm 1.0) \times 10^{-11}$ & $(5.3 \pm 2.4) \times 10^{-11}$ & $(9.0 \pm 14.1) \times 10^{-10}$ \\
\bottomrule
\end{tabular}

\vspace{12pt}

\textbf{(b) Matched terminal temperature and equal iteration counts} \\[3pt]
\begin{tabular}{r | ccc}
\toprule
\textbf{Iteration} 
  & \textbf{Pair 1: $\bm{\lambda = 1.2}$} 
  & \textbf{Pair 2: $\bm{\lambda = 1.5}$} 
  & \textbf{Pair 3: $\bm{\lambda = 1.99}$} \\
\textbf{($k$)} 
  & \scriptsize vs BDRS($\eta = \frac{100{,}001}{120{,}001}$) 
  & \scriptsize vs BDRS($\eta = \frac{100{,}001}{150{,}001}$) 
  & \scriptsize vs BDRS($\eta = \frac{100{,}001}{199{,}001}$) \\
  & \scriptsize ($\epsilon_K = 8.33 \times 10^{-6}$) 
  & \scriptsize ($\epsilon_K = 6.67 \times 10^{-6}$) 
  & \scriptsize ($\epsilon_K = 5.03 \times 10^{-6}$) \\
\midrule
$1{,}000$ & $(2.2 \pm 0.3) \times 10^{-9}$ & $(3.5 \pm 0.8) \times 10^{-9}$ & $(5.9 \pm 11.0) \times 10^{-7}$ \\
$10{,}000$ & $(2.1 \pm 0.4) \times 10^{-10}$ & $(5.1 \pm 1.7) \times 10^{-10}$ & $(9.5 \pm 7.8) \times 10^{-10}$ \\
$50{,}000$ & $(3.5 \pm 1.3) \times 10^{-11}$ & $(9.1 \pm 4.0) \times 10^{-11}$ & $(7.8 \pm 13.3) \times 10^{-10}$ \\
$100{,}000$ & $(1.8 \pm 1.3) \times 10^{-11}$ & $(4.3 \pm 5.6) \times 10^{-11}$ & $(1.8 \pm 3.1) \times 10^{-9}$ \\
\bottomrule
\end{tabular}
\end{table}

\subsection{Effectiveness of Anytime Certificate}
\label{app:effectiveness-of-anytime-certificate}

\paragraph{Experiment Setup.}
We evaluate the practical usefulness of the anytime certificate described in Section~\ref{sec:certified-primal-dual-bound} and use an iteration budget of $10^4$ steps.
Suppose first that the optimal value $p_i^\star$ of
instance $i$ is available from an exact \ac{LP} solve.
We can then evaluate the \emph{certification delay}, which measures how many iterations are required before the duality gap achieves a prescribed tolerance $\tau$ compared with an oracle stopping rule that knows $p_i^\star$ and terminates when the optimality gap is within $\tau$.
To reflect how the certificate would be used
in practice, we retain the best bounds observed up to iteration $k$:
\begin{equation}
    U_{i,k}^{\mathrm{best}}
    :=
    \min_{t\leq k}U_{i,t},
    \qquad
    L_{i,k}^{\mathrm{best}}
    :=
    \max_{t\leq k}L_{i,t}.
\end{equation}
By weak duality,
\begin{equation}
    L_{i,k}^{\mathrm{best}}
    \leq
    p_i^\star
    \leq
    U_{i,k}^{\mathrm{best}}.
\end{equation}

If we indeed have access to $p_i^\star$, one can evaluate the optimality gap
\begin{equation}
             E_{i,k}^{\mathrm{oracle}}
    :=
    U_{i,k}^{\mathrm{best}}-p_i^\star.
\end{equation}

In practice, however, $p_i^\star$ is unavailable, and termination must
instead rely on the duality gap
\begin{equation}
    G_{i,k}^{\mathrm{cert}}
    :=
    U_{i,k}^{\mathrm{best}}-L_{i,k}^{\mathrm{best}}.
\end{equation}
The two quantities satisfy
\begin{equation}
    G_{i,k}^{\mathrm{cert}}
    =
    E_{i,k}^{\mathrm{oracle}}
    +
    \left(p_i^\star-L_{i,k}^{\mathrm{best}}\right)
    \geq
    E_{i,k}^{\mathrm{oracle}}.
\end{equation}
Consequently, $G_{i,k}^{\mathrm{cert}}\leq\tau$ guarantees that the
returned plan is within $\tau$ of the optimal transport cost, although
its true error may be smaller.

For a prescribed additive tolerance $\tau$, define the oracle and
certified stopping iterations by
\begin{align}
    k_i^{\mathrm{oracle}}(\tau)
    &:=
    \min\left\{
        k:
        E_{i,k}^{\mathrm{oracle}}\leq\tau
    \right\},\\
    k_i^{\mathrm{cert}}(\tau)
    &:=
    \min\left\{
        k:
        G_{i,k}^{\mathrm{cert}}\leq\tau
    \right\}.
\end{align}
Because the certified gap upper bounds the true optimality gap,
\begin{equation}
    k_i^{\mathrm{cert}}(\tau)
    \geq
    k_i^{\mathrm{oracle}}(\tau).
\end{equation}
We measure the conservatism of the certificate through the
\emph{certification delay}
\begin{equation}
    D_i(\tau)
    :=
    \frac{
        k_i^{\mathrm{cert}}(\tau)
    }{
        k_i^{\mathrm{oracle}}(\tau)
    }
    \geq 1.
\end{equation}
A value of $D_i(\tau)=1$ means that the certificate identifies the
first iterate satisfying the requested accuracy, while
$D_i(\tau)=2$ means that certification requires twice as many
iterations as an oracle stopping rule.
We report the empirical cumulative distribution of
$D_i(\tau)$ over the DOTmark instances for
$\tau\in\{10^{-2},10^{-3},10^{-4}\}$. 
The iteration budget used here is $10^{5}$.
If the oracle reaches the requested
tolerance but the certificate does not do so within the terminal
budget, we set $D_i(\tau)=+\infty$.

\paragraph{Results.}
We supplement the analysis in the main text with Figure~\ref{fig:certificate-delay-by-class}, where it shows how the behavior of the delay certification varies across image classes. The reduction in delay at $\tau=10^{-2}$ is visible throughout the benchmark, whereas the $\tau=10^{-3}$ distributions are much closer for some classes, including GRF Smooth and Shapes. These results show that the delay depends on the method, tolerance, and problem class.

\begin{figure}
    \centering
    \includegraphics[width=\linewidth]{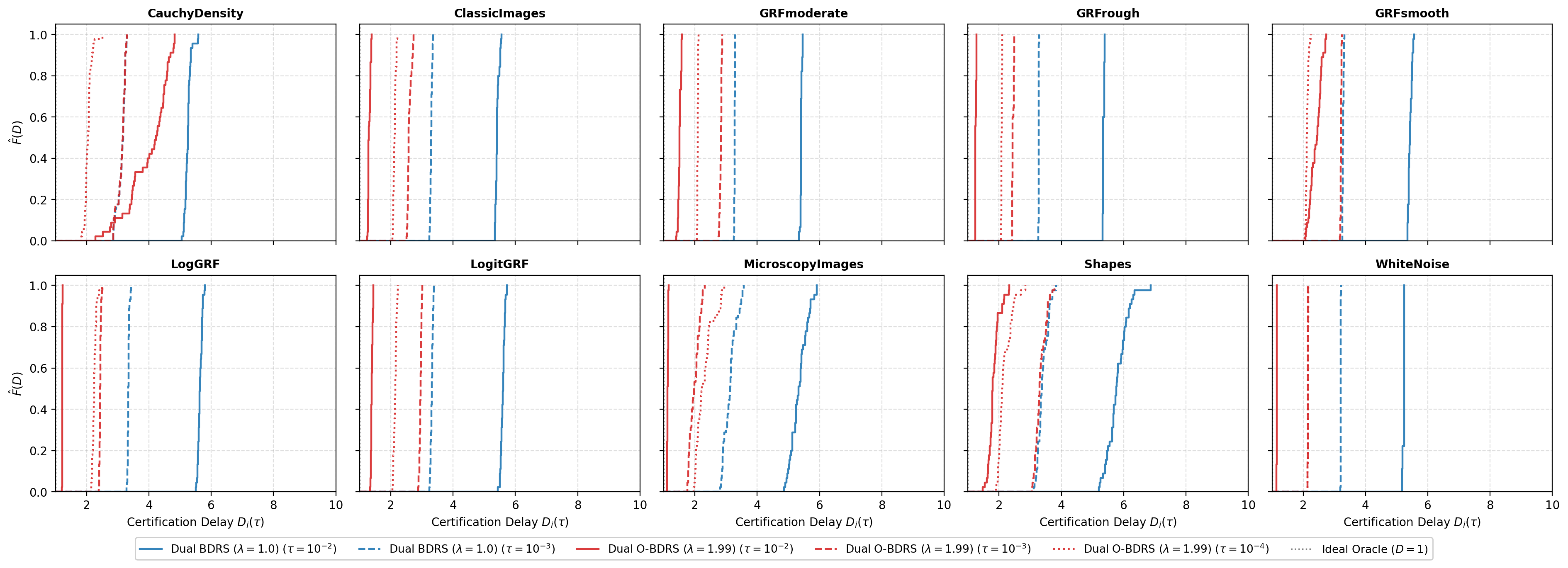}
    \caption{Classwise certification-delay distributions for Dual BDRS ($\lambda=1$) and Dual O-BDRS ($\lambda=1.99$) on DOTmark. Each panel contains 45 image pairs at resolution $64\times64$. Solid and dashed curves correspond to $\tau=10^{-2}$ and $\tau=10^{-3}$, respectively; dotted red curves additionally show O-BDRS at $\tau=10^{-4}$, with BDRS omitted as certification was not obtained within the iteration budget. The horizontal axis is $D_i(\tau)=k_i^{\mathrm{cert}}(\tau)/k_i^{\mathrm{oracle}}(\tau)$ and the vertical axis is its empirical cumulative distribution. The reference $D_i=1$ denotes agreement with the oracle stopping iteration. Curves further to the left indicate less additional iteration cost relative to each method's own oracle.}
    \label{fig:certificate-delay-by-class}
\end{figure}

\subsection{Large-scale color transfer}
\label{app:color-transfer}
The preceding experiments examine the update structure and certification behavior on problems for which a reference optimum is available. We now consider a large scale problem in which storing the transport plan becomes the main obstacle. 
To this end, pixel-level color transfer allows us to assess two practical consequences of the dual formulation: the time required to reach a prescribed certified accuracy and the problem sizes that become accessible with a linear memory implementation.

The matched temperature results in Appendix~\ref{app:matched-temperature-ablation} motivate using Dual BDRS with $\lambda=1$ as the primary solver while tuning its initial temperature for this application. This evaluates the performance of the tuned solver, whereas the fixed $\eta$ DOTmark study isolates the effect of overrelaxation and targets the general case where tuning is unnecessary. 
Our initial experiments compares against \ac{MDOT-TNT}~\citep{Kemertas2025ATransport} at a common approximate transport cost.
Following that, we run large-scale \ac{OT} on $4238\times2365$ resolution images, comprising a total of $10^7$ pixels, establishing a new state-of-the-art for large-scale \ac{OT}.

\paragraph{Pixel-level OT formulation.}
Let $I_s$ denote the source image whose spatial arrangement is retained and $I_p$ the palette image whose colors are transferred to $I_s$. After flattening the images, let $x_i=\operatorname{RGB}(I_s,i)\in[0,255]^3$ for $i=1,\ldots,N$, and $y_j=\operatorname{RGB}(I_p,j)\in[0,255]^3$ for $j=1,\ldots,M$. Each pixel has uniform mass, so
\[
    r_i=\frac{1}{N},\qquad c_j=\frac{1}{M},\qquad
    C_{ij}=\frac{\|x_i-y_j\|_2^2}{\max_{p,q}\|x_p-y_q\|_2^2},
\]
and $C_{ij}$ is normalized by the maximum cost, thus $C_{ij}\in[0,1]$. Every pixel in each input image is retained as a separate atom, including pixels with repeated RGB values. 
We do not aggregate equal colors or replace pixels with quantized color representatives as we want to evaluate the solver without compression.
At each tested resolution, the solver treats the complete source and palette images as one global OT problem with $NM$ implicit transport entries.

\paragraph{Implicit implementation and output.}
We use log-domain Dual BDRS and evaluate pairwise reductions lazily with PyKeOps. The cost matrix, kernel, and transport plan are never materialized, including during certificate evaluation and image reconstruction. The large-scale implementation uses \texttt{float32} on a single NVIDIA L40S GPU. 
We log all primal upper and dual lower bounds to show the trajectory of the algorithm but retain the state with the lowest relative duality gap for barycentric projection.
The transferred image is obtained by barycentric projection of the chosen implicit output plan $X$:
\[
    \widehat y_i
    =\frac{\sum_{j=1}^{M}X_{ij}y_j}{\sum_{j=1}^{M}X_{ij}}.
\]
The numerator and denominator are evaluated using lazy pairwise reductions. We then clip the RGB values to $[0,255]$ and reshape them to the source image dimensions.
In our experiments, both row and column residual have extremely low errors and thus we do not round the transport plan before barycentric projection.
Therefore, the visual output is \emph{not} the transformation from a feasible transport plan, but a means for us to visualize the optimality of the transport plan.
The primal-dual guarantee applies to the transport cost of the repaired plan, if feasible solutions are required.

\paragraph{$\eta$ tuning.}
We first use $512\times512$ instances to select the initial temperature.
Specifically, we started with $\eta=1$, and gradually reduced the temperature until we observe oscillations in the primal-dual gap.
We found that $\eta=0.1$ is approximately the limit where oscillations start to occur.
In the spirit of implicit annealing in \ac{BDRS}, we did not further tune $\eta$ and $\eta=1$ gives a duality gap curve that resembles a monotonically decreasing curve.
At $\eta < 0.1$, we start to observe some oscillations in the duality gap.
Despite that, lower $\eta$ values can still arrive at a given optimality gap with fewer iterations.

\paragraph{Comparison with existing state-of-the-art.}
We compare \ac{BDRS} with the released low-memory implementation of \ac{MDOT-TNT} \citep{Kemertas2025ATransport} on the same pair of $1024\times1024$ images, using all pixels and the same normalized squared Euclidean cost in RGB space.
\citet{Kemertas2025ATransport} report approximately 10 hours for their \texttt{float64} PyKeOps implementation at a final inverse temperature of $\gamma=2^{10}$, without reporting the corresponding linear transport cost.
We ran the provided code in \texttt{float64} with $\gamma=2^{10}$ and obtained a solution within 7 hours on a single L40S GPU.
We also ran the provided code in \texttt{float32} with $\gamma=2^{10}$ and obtained a solution within 3.5 hours on a single L40S GPU.
The final costs obtained on both runs was identical.
In the following experiments, we use \texttt{float32} to reflect operational realities where speed is critical.

To this end, we evaluate the baseline directly, with $\gamma=2^{5}$, followed by $\gamma=2^{10}$.
We choose these two settings to represent different computational budgets: $\gamma=2^{5}$ targets scenarios requiring a solution quickly, while $\gamma=2^{10}$ targets those permitting longer computations, such as overnight runs.
With $\gamma=2^{5}$, \ac{MDOT-TNT} takes $1685.76$s of solver time and yields an unrepaired transport cost of $0.072147$.
Our \texttt{float32} \ac{BDRS} implementation, with $\eta=0.1$ and $\lambda=1$, completes 10 iterations in $13.55$s of solver time, or $15.71$s including setup and certificate evaluation.
The primal upper bound at iteration 10 is $0.070403$, which is less than the unrepaired transport cost of \ac{MDOT-TNT}.
We recognize that the baseline's unreported marginal residuals limit the interpretation of this cost comparison, and therefore focus on the visual results in Figure~\ref{fig:color_transfer_comparison}, which clearly show \ac{BDRS} with 10 iterations yielding a sharper result than \ac{MDOT-TNT}.

\begin{figure}[t]
    \centering
    \begin{minipage}[t]{0.32\linewidth}
        \centering
        \includegraphics[width=\linewidth]{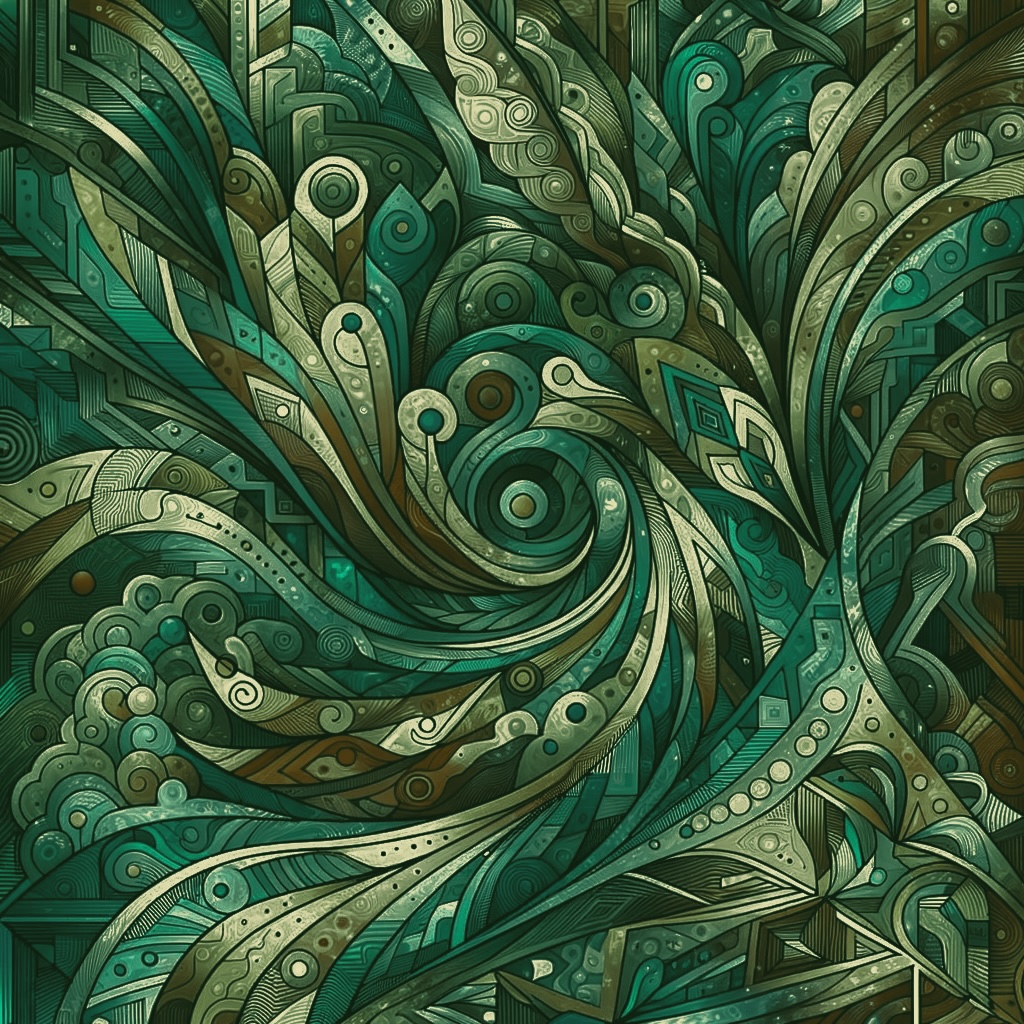}
        \par\smallskip
        {\small\textbf{(a) BDRS: 10 iterations}}
    \end{minipage}
    \hfill
    \begin{minipage}[t]{0.32\linewidth}
        \centering
        \includegraphics[width=\linewidth]{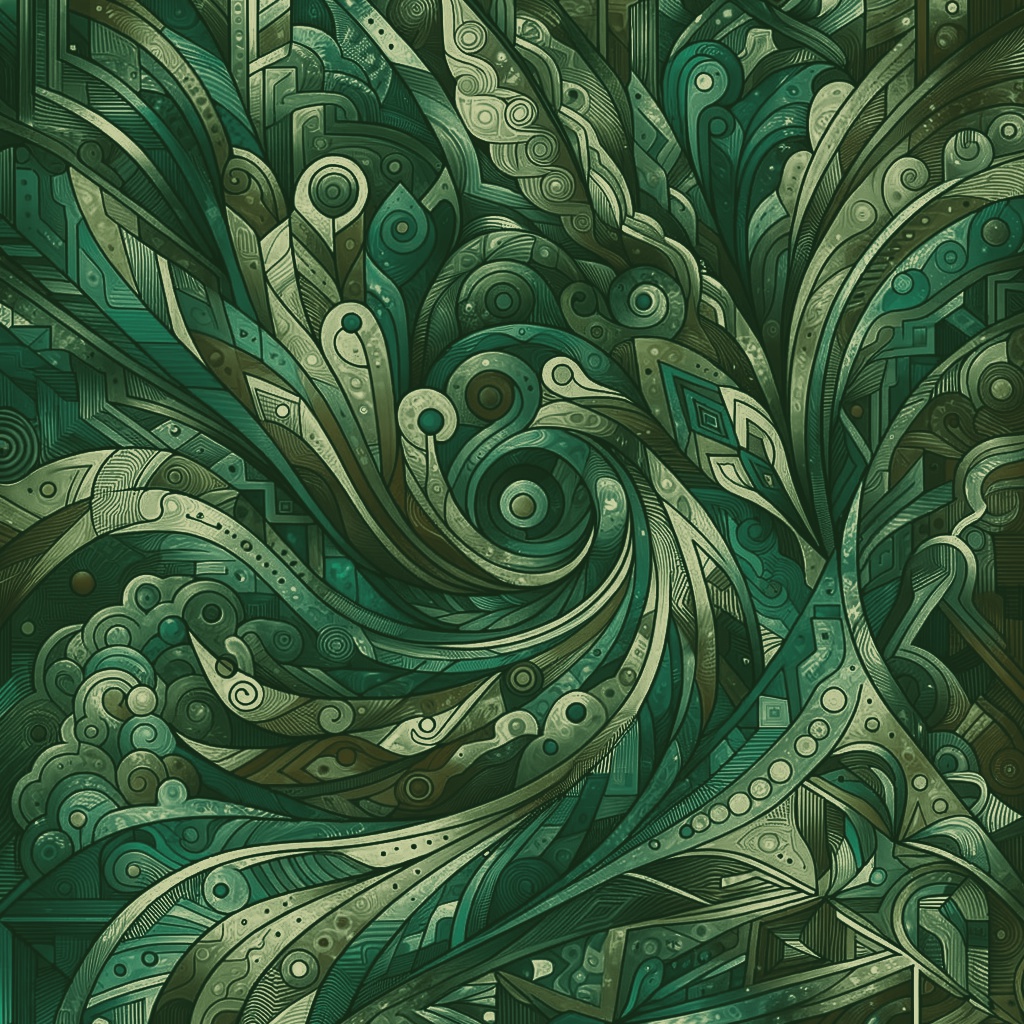}
        \par\smallskip
        {\small\textbf{(b) MDOT-TNT}}
    \end{minipage}
    \hfill
    \begin{minipage}[t]{0.32\linewidth}
        \centering
        \includegraphics[width=\linewidth]{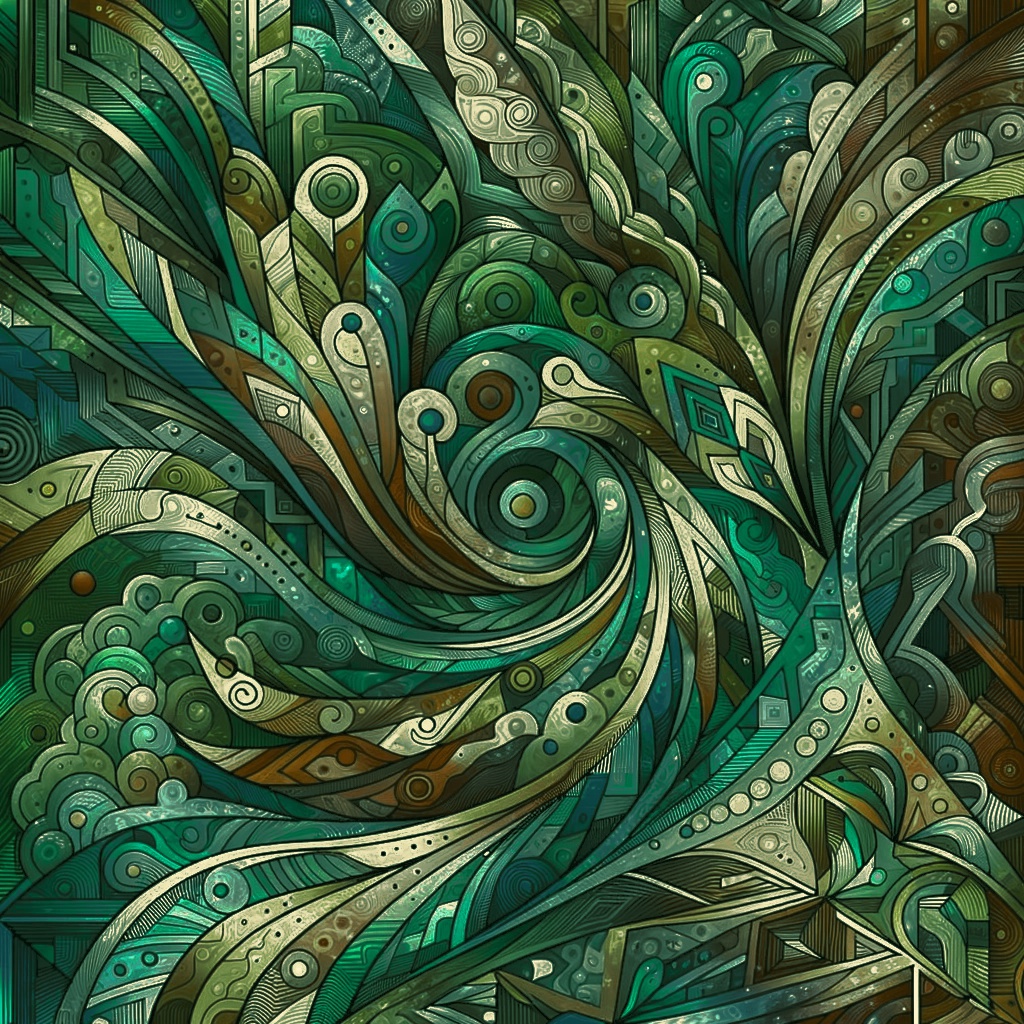}
        \par\smallskip
        {\small\textbf{(c) BDRS reference}}
    \end{minipage}
    \caption{
        Visual comparison on the same $1024\times1024$ color-transfer
        problem. All panels show barycentric reconstructions using
        all source and target pixels, \texttt{float32}, and the same
        normalized squared Euclidean cost in RGB space.
        \textbf{(a)} BDRS after 10 iterations, requiring $13.55$\,s
        of solver time ($15.71$\,s including setup and certificate
        evaluation).
        \textbf{(b)} MDOT-TNT at final inverse temperature
        $\gamma=2^5$, requiring $1685.76$\,s of solver time.
        \textbf{(c)} BDRS after 1,000 iterations, used as a reference
        solution with a reported relative duality gap of $1.59\%$.
        BDRS uses $\eta=0.1$ and $\lambda=1$.
        Solver times exclude image reconstruction.
        Observe how \ac{BDRS} with 10 iterations is closer to the reference, where the brown parts of the image are more pronounced, whereas \ac{MDOT-TNT} is still generally green, an indication of a diffuse transport plan.
        On the 10 iterations comparison, \ac{BDRS} provides a 107$\times$ speedup over \ac{MDOT-TNT}.
        Best viewed digitally.
    }
    \label{fig:color_transfer_comparison}
\end{figure}

We then compare \ac{BDRS} with \ac{MDOT-TNT} at $\gamma=2^{10}$.
We first run \ac{BDRS} for 1,000 iterations to assess both optimization accuracy and marginal feasibility.
We ran both the forward transfer and the backward transfer and Table~\ref{tab:color_transfer_accuracy} summarizes the bounds and marginal errors.
For the unrepaired implicit plan used in barycentric reconstruction, the row and column marginal $\ell_1$ errors are both below $7.34\times10^{-5}$.
Each source and target pixel has a prescribed mass of $1/2^{20}\approx9.54\times10^{-7}$.
The maximum absolute row and column marginal errors are $2.06\times10^{-10}$ and $7.87\times10^{-10}$, respectively.
Dividing these errors by the prescribed pixel mass gives maximum relative errors of $0.0216\%$ and $0.0826\%$.
Thus, every row and column mass deviates from its prescribed value by less than $0.083\%$, indicating that the unrepaired transport plan is approximately feasible in practice.
\ac{BDRS} requires only 156MB and takes approximately 24 minutes to run to achieve a visual similarity, shown in Figure~\ref{fig:color_transfer_bidirectional}, that is indistinguishable to the naked eye from what was reported by \citet{Kemertas2025ATransport}.
The matched run on the same hardware of \ac{MDOT-TNT} also confirms this result, where it ran for 3.5 hours and returned a repaired cost of $0.0507$.
\ac{BDRS} thus achieves a $9\times$ speed up, while achieving a lower cost of $0.0497$ than \ac{MDOT-TNT}.

\begin{figure}[t]
    \centering

    \begin{subfigure}[t]{0.48\linewidth}
        \centering
        \includegraphics[width=\linewidth]{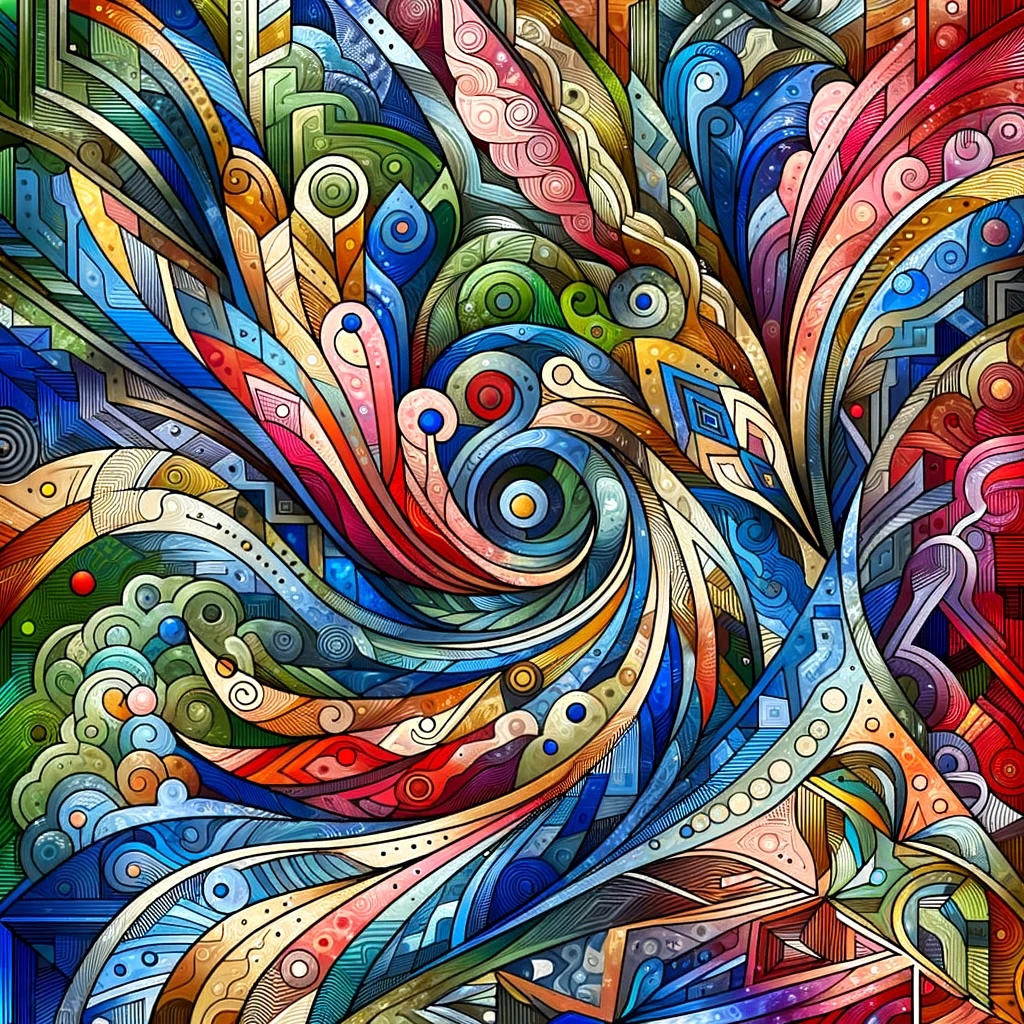}
        \caption{Original Image A}
        \label{fig:color_original_1}
    \end{subfigure}
    \hfill
    \begin{subfigure}[t]{0.48\linewidth}
        \centering
        \includegraphics[width=\linewidth]{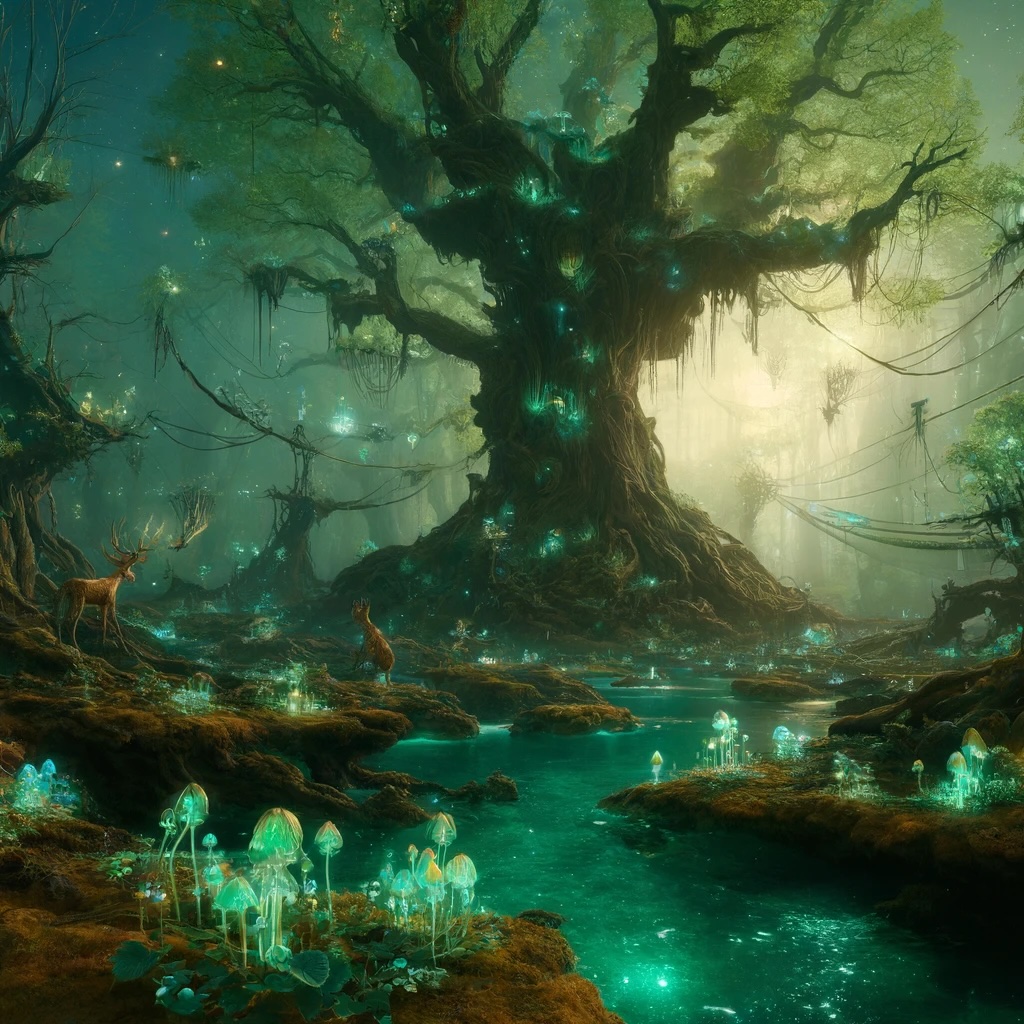}
        \caption{Original Image B}
        \label{fig:color_original_2}
    \end{subfigure}

    \par\medskip

    \begin{subfigure}[t]{0.48\linewidth}
        \centering
        \includegraphics[width=\linewidth]{assets/large-scale/kemertas1-1024x1024_to_kemertas2-1024x1024_eta0.1_lam1_iters1000_float32_application_cert20.jpg}
        \caption{Forward transfer (A to B).}
        \label{fig:color_forward}
    \end{subfigure}
    \hfill
    \begin{subfigure}[t]{0.48\linewidth}
        \centering
        \includegraphics[width=\linewidth]{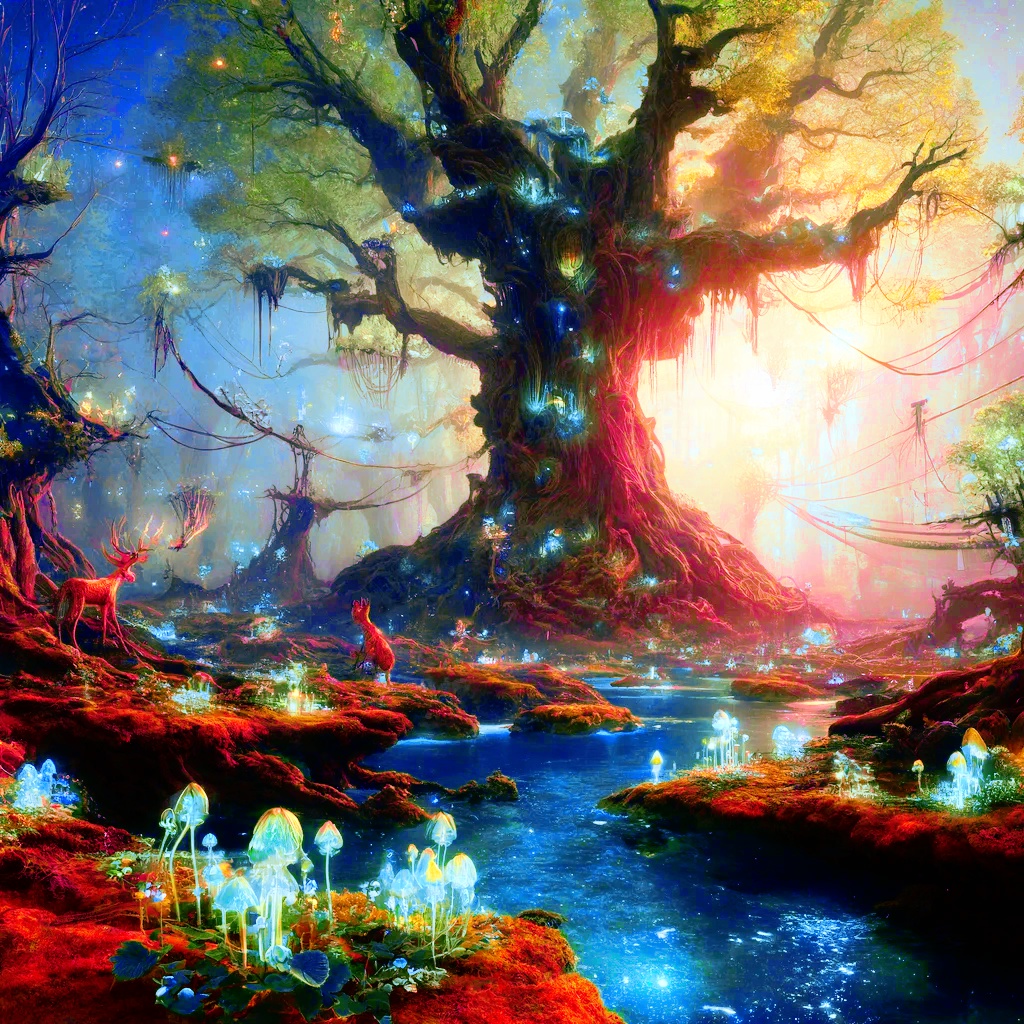}
        \caption{Backward transfer (B to A).}
        \label{fig:color_backward}
    \end{subfigure}

    \caption{
        Bidirectional color transfer between two $1024\times1024$
        images using BDRS.
        \textbf{(a, b)} Original images.
        \textbf{(c)} Image 1 transferred to the color distribution
        of image 2.
        \textbf{(d)} Image 2 transferred to the color distribution
        of image 1.
        Both runs use all source and target pixels and perform
        1,000 iterations in \texttt{float32}, with $\eta=0.1$
        and $\lambda=1$.
        The reported relative duality gaps are $1.59\%$ and
        $1.58\%$ for the forward and backward directions,
        respectively.
        Each direction ran in approximately 24 minutes with 156MB of memory.
        Transferred images are obtained by barycentric projection
        of the unrepaired transport plans.
    }
    \label{fig:color_transfer_bidirectional}
\end{figure}

\begin{table}[t]
    \centering
    \small
    \caption{
        Accuracy, runtime, and memory of BDRS after 1,000 iterations
        for both $1024\times1024$ color transfer directions in
        \texttt{float32}.
        Marginal errors refer to the unrepaired plan; paired entries
        are ordered as (row, column).
        Relative marginal errors are measured against the prescribed
        pixel mass $2^{-20}$.
        Runtime includes setup and certificate evaluation but excludes
        image reconstruction.
        Memory is the peak GPU allocation tracked by PyTorch,
        including reconstruction and validation, and excludes
        allocations outside PyTorch.
    }
    \label{tab:color_transfer_accuracy}
    \begin{tabular}{@{}lrr@{}}
        \toprule
        Statistic & Forward ($1\to2$) & Backward ($2\to1$) \\
        \midrule
        Primal upper bound $U$
            & $0.0496885$
            & $0.0496843$ \\
        Dual lower bound $L$
            & $0.0488973$
            & $0.0488973$ \\
        Relative duality gap $(U-L)/U$
            & $1.59\%$
            & $1.58\%$ \\
        Marginal $\ell_1$ error
            & $(2.94\times10^{-5},\,6.35\times10^{-5})$
            & $(2.54\times10^{-5},\,7.34\times10^{-5})$ \\
        Maximum absolute marginal error
            & $(2.06\times10^{-10},\,7.87\times10^{-10})$
            & $(2.06\times10^{-10},\,7.87\times10^{-10})$ \\
        Maximum relative marginal error
            & $(0.0216\%,\,0.0826\%)$
            & $(0.0216\%,\,0.0826\%)$ \\
        Runtime including setup and certification
            & $1433.06$\,s
            & $1421.04$\,s \\
        Peak allocated GPU memory
            & $156.13$\,MiB
            & $156.13$\,MiB \\
        \bottomrule
    \end{tabular}
\end{table}

\ac{LAMP}~\citep{Burns2026Log-AveragedSpace} also provides a linear-memory approach to \ac{OT}.
We attempted to run the authors' Julia implementation but encountered a GPU kernel compilation error (\texttt{InvalidIRError}) in our environment.
Our best-effort PyTorch reimplementation was tractable for color transfer between $512\times512$ images, but not between $1024\times1024$ images in our setup.
We therefore omit a quantitative comparison with \ac{LAMP}.
These implementation difficulties do not establish a scalability limit for \ac{LAMP}. 
The absence of a direct comparison with a validated \ac{LAMP} implementation remains a limitation of our evaluation.

\paragraph{Scaling pixel-level \ac{OT} to ten million atoms per image.}
We further apply Dual BDRS to bidirectional color transfer between two $4238\times2365$ images. Each image contains $10{,}022{,}870$ pixels, yielding a single global \ac{OT} problem with approximately $10^{14}$ implicit transport entries. We use $\eta=0.01$, $\lambda=1$, and \texttt{float32} pairwise computations on a single NVIDIA L40S GPU, evaluating the certificate every 20 iterations. Both directions complete 1,000 iterations in approximately 35 hours each, with a peak allocated GPU memory of $1.58$\,GiB. The selected iterates, both at iteration 840, attain relative primal--dual gaps of $2.41\%$ and $2.80\%$ for the forward and backward transfers, respectively. Figure~\ref{fig:color_transfer_bidirectional_4238x2365} in the main text shows the barycentric reconstructions from the unrepaired implicit plans, while Table~\ref{tab:large_color_transfer_accuracy} reports the bounds, marginal errors, runtimes, and memory usage. The cost matrix, kernel, and transport plan are never materialized.

\begin{table}[t]
    \centering
    \small
    \caption{
        Accuracy, runtime, and memory of BDRS for both
        $4238\times2365$ color transfer directions, with
        $\eta=0.01$, $\lambda=1$, and \texttt{float32}
        pairwise computations.
        Images 1 and 2 are the canyon and outdoor images, respectively.
        Each run completes 1,000 iterations; reported accuracy
        corresponds to the selected iterate at iteration 840.
        Marginal errors refer to the unrepaired intermediate plan;
        paired entries are ordered as (row, column).
        Relative marginal errors are measured against the prescribed
        pixel mass $1/10{,}022{,}870$.
        Runtime covers the full run, including setup, certificate
        evaluation, and final verification, but excludes image
        reconstruction.
        Memory is the peak GPU allocation tracked by PyTorch,
        including reconstruction and validation, and excludes
        allocations outside PyTorch.
    }
    \label{tab:large_color_transfer_accuracy}
    \begin{tabular}{@{}lrr@{}}
        \toprule
        Statistic & Forward ($1\to2$) & Backward ($2\to1$) \\
        \midrule
        Primal upper bound $U$
            & $0.0101706$
            & $0.0102118$ \\
        Dual lower bound $L$
            & $0.00992558$
            & $0.00992557$ \\
        Relative duality gap $(U-L)/U$
            & $2.41\%$
            & $2.80\%$ \\
        Marginal $\ell_1$ error
            & $(1.16\times10^{-4},\,3.29\times10^{-4})$
            & $(1.17\times10^{-4},\,4.28\times10^{-4})$ \\
        Maximum absolute marginal error
            & $(4.68\times10^{-11},\,1.50\times10^{-9})$
            & $(4.68\times10^{-11},\,1.34\times10^{-9})$ \\
        Maximum relative marginal error
            & $(0.0469\%,\,1.5023\%)$
            & $(0.0469\%,\,1.3414\%)$ \\
        Runtime including setup and certification
            & $125582.11$\,s
            & $124820.21$\,s \\
        Peak allocated GPU memory
            & $1613.97$\,MiB
            & $1613.97$\,MiB \\
        \bottomrule
    \end{tabular}
\end{table}

\paragraph{Floating-point evaluation.}
The reported primal and dual bounds are floating-point evaluations of the exact arithmetic certificate.
In the largest experiments, \texttt{float32} computations may introduce errors in normalization and certificate evaluation.
The reported marginal residuals quantify mass-conservation errors, but do not bound the numerical errors in the certificate itself.

%% file: appendices/app_experiments_momentum_kernel_ablation_figures.tex
\section{Figures for Momentum and Kernel Ablation}
In this section, we show all ten graphs for each class in the DOTmark dataset.
We put all figures for the momentum and kernel ablation in a single section to improve readability.

\label{app:figures-for-momentum-and-kernel-ablation}
\begin{figure}[tbp]
    \centering
    \includegraphics[width=\textwidth]{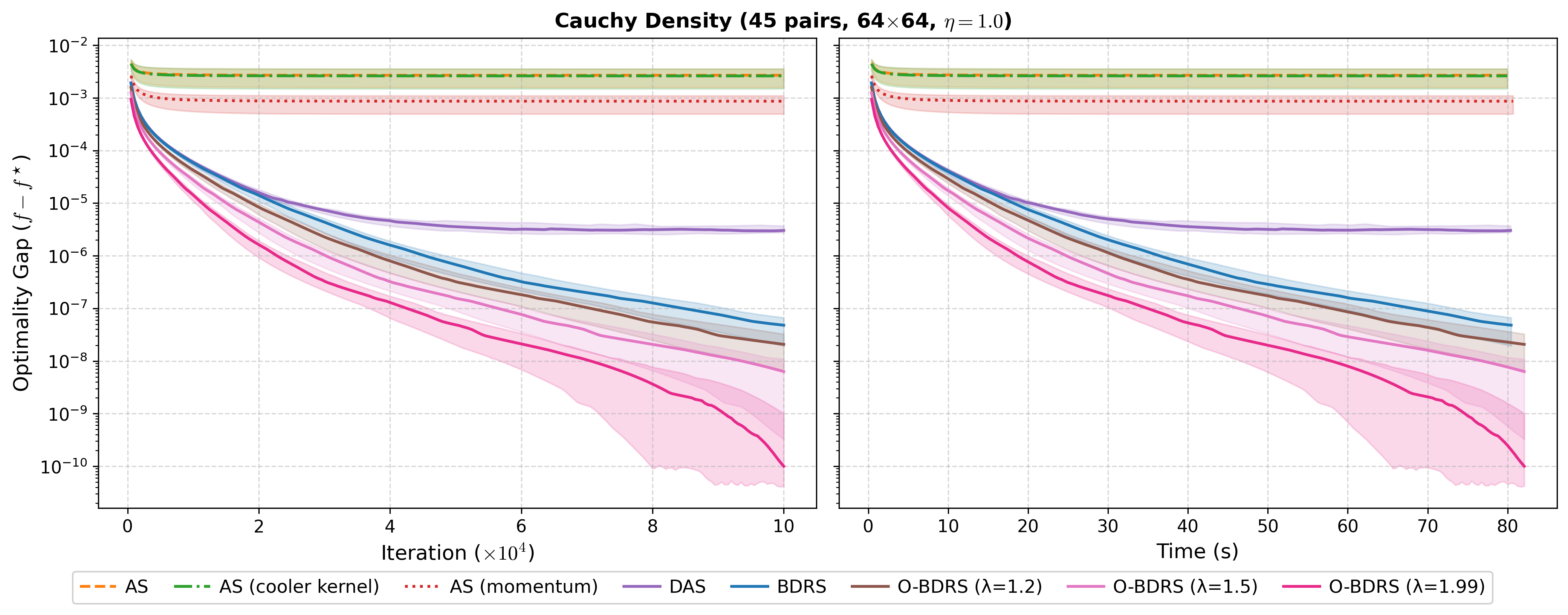}
    \par\medskip
    \includegraphics[width=\textwidth]{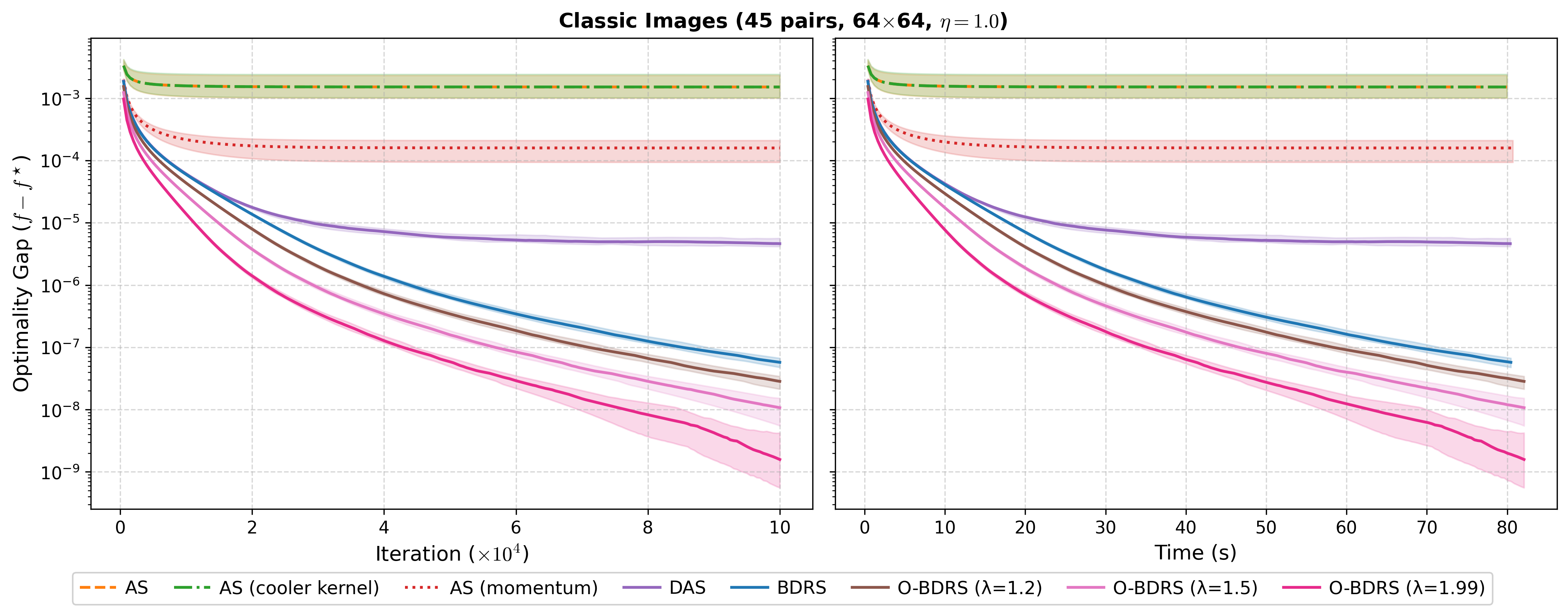}

    \caption{
        Momentum and kernel ablations on the Cauchy Density (top)
        and Classic images (bottom) classes of DOTmark.
        Each class contains 45 image pairs at resolution
        $64\times64$, with $\eta=1$.
        The left panels show the transport cost difference
        $f-f^\star$ against iteration count, and the right panels
        show the same quantity against median solver time,
        excluding rounding.
        Here $f$ is the evaluated transport cost and $f^\star$
        is the reference value computed using POT's network
        simplex solver.
        Lines show medians across image pairs; shaded bands
        indicate the 25th--75th percentiles.
    }
    \label{fig:momentum-ablation-cauchy-classic}
\end{figure}

\begin{figure}[tbp]
    \centering
    \includegraphics[width=\textwidth]{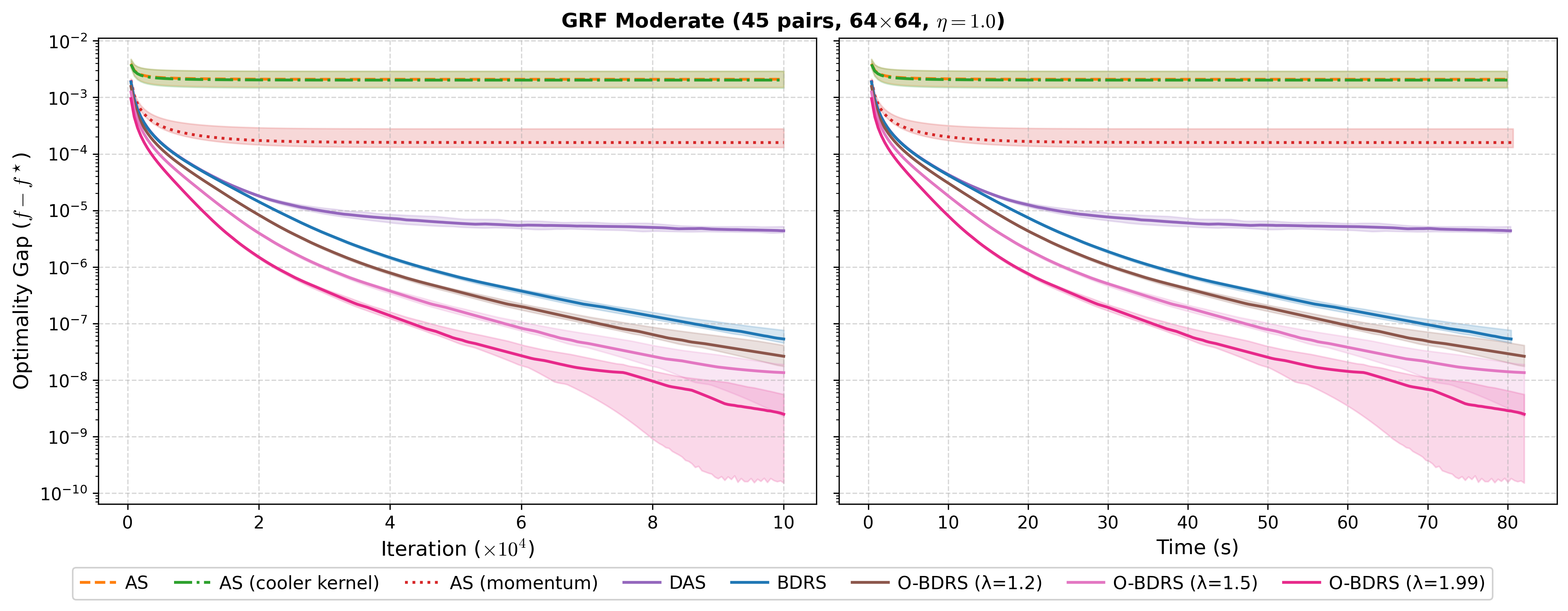}
    \par\medskip
    \includegraphics[width=\textwidth]{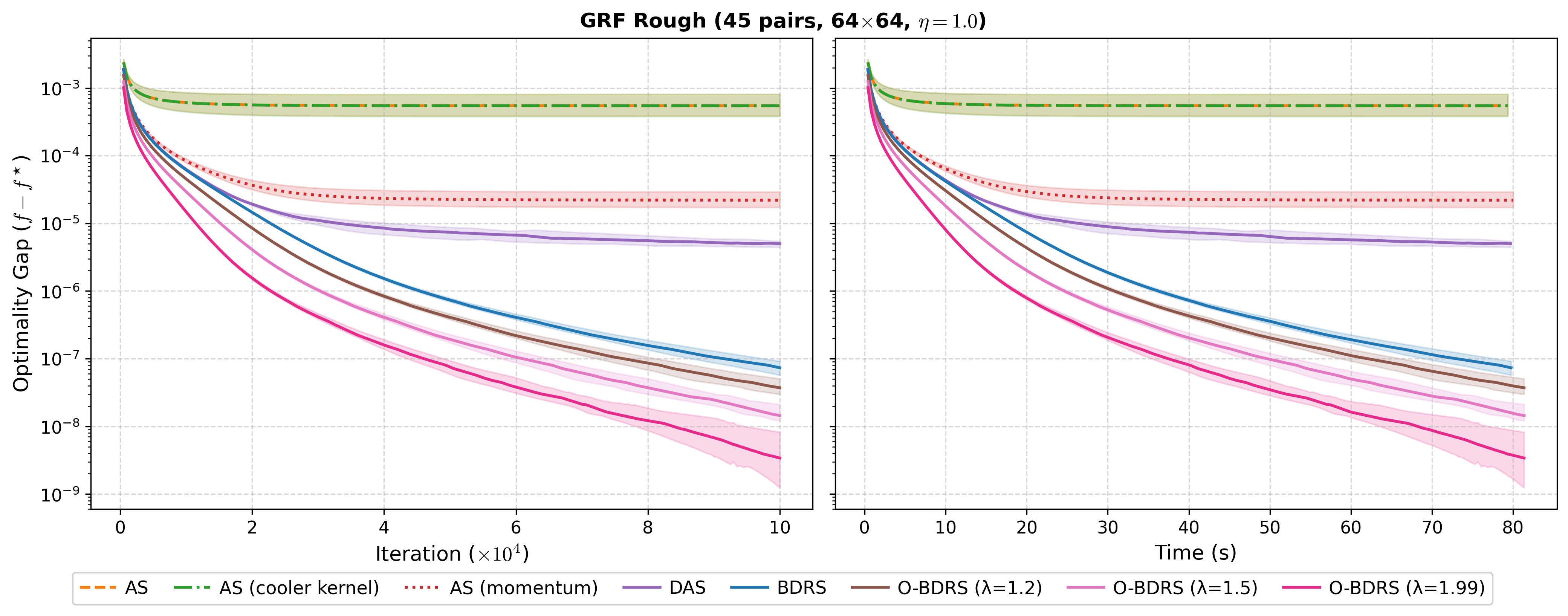}

    \caption{
        Momentum and kernel ablations on the GRF Moderate (top)
        and GRF Rough (bottom) classes of DOTmark.
        Each class contains 45 image pairs at resolution
        $64\times64$, with $\eta=1$.
        The left panels show the transport cost difference
        $f-f^\star$ against iteration count, and the right panels
        show the same quantity against median solver time,
        excluding rounding.
        Here $f$ is the evaluated transport cost and $f^\star$
        is the reference value computed using POT's network
        simplex solver.
        Lines show medians across image pairs; shaded bands
        indicate the 25th--75th percentiles.
    }
    \label{fig:momentum-ablation-grf-moderate-rough}
\end{figure}

\begin{figure}[tbp]
    \centering
    \includegraphics[width=\textwidth]{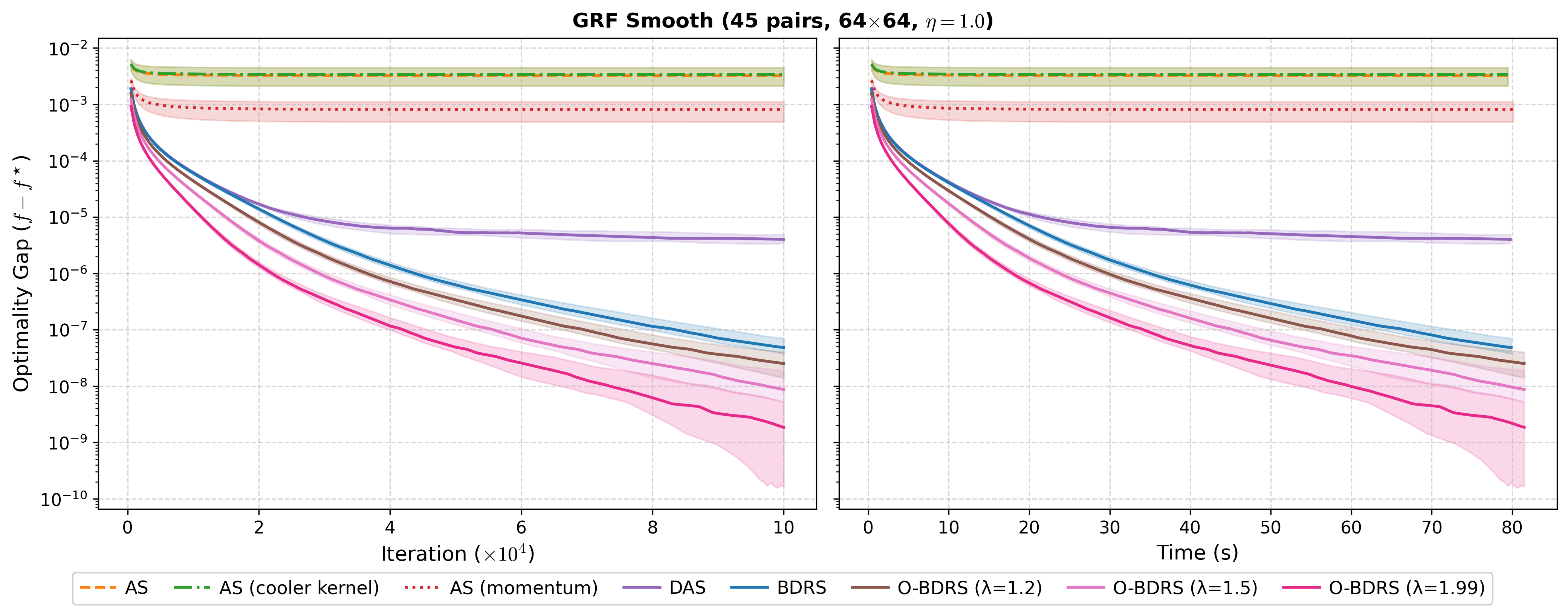}
    \par\medskip
    \includegraphics[width=\textwidth]{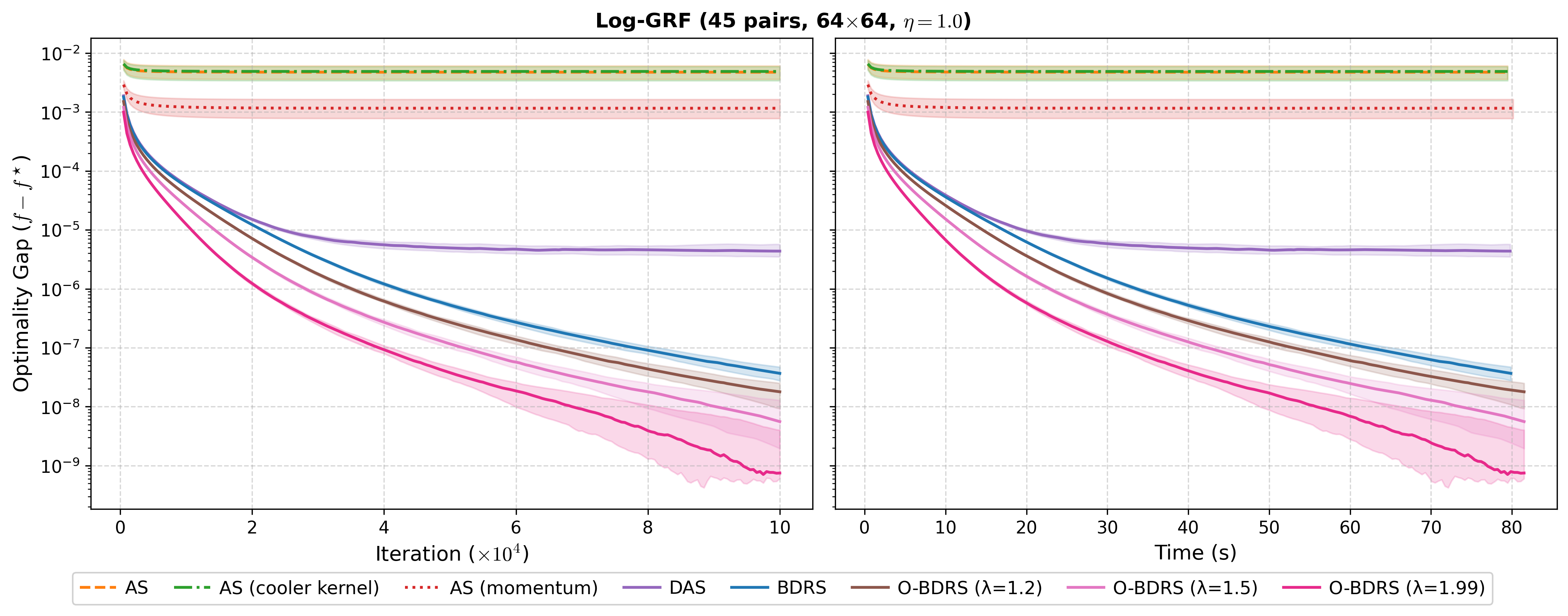}

    \caption{
        Momentum and kernel ablations on the GRF Smooth (top)
        and Log-GRF (bottom) classes of DOTmark.
        Each class contains 45 image pairs at resolution
        $64\times64$, with $\eta=1$.
        The left panels show the transport cost difference
        $f-f^\star$ against iteration count, and the right panels
        show the same quantity against median solver time,
        excluding rounding.
        Here $f$ is the evaluated transport cost and $f^\star$
        is the reference value computed using POT's network
        simplex solver.
        Lines show medians across image pairs; shaded bands
        indicate the 25th--75th percentiles.
    }
    \label{fig:momentum-ablation-grf-smooth-log}
\end{figure}

\begin{figure}[tbp]
    \centering
    \includegraphics[width=\textwidth]{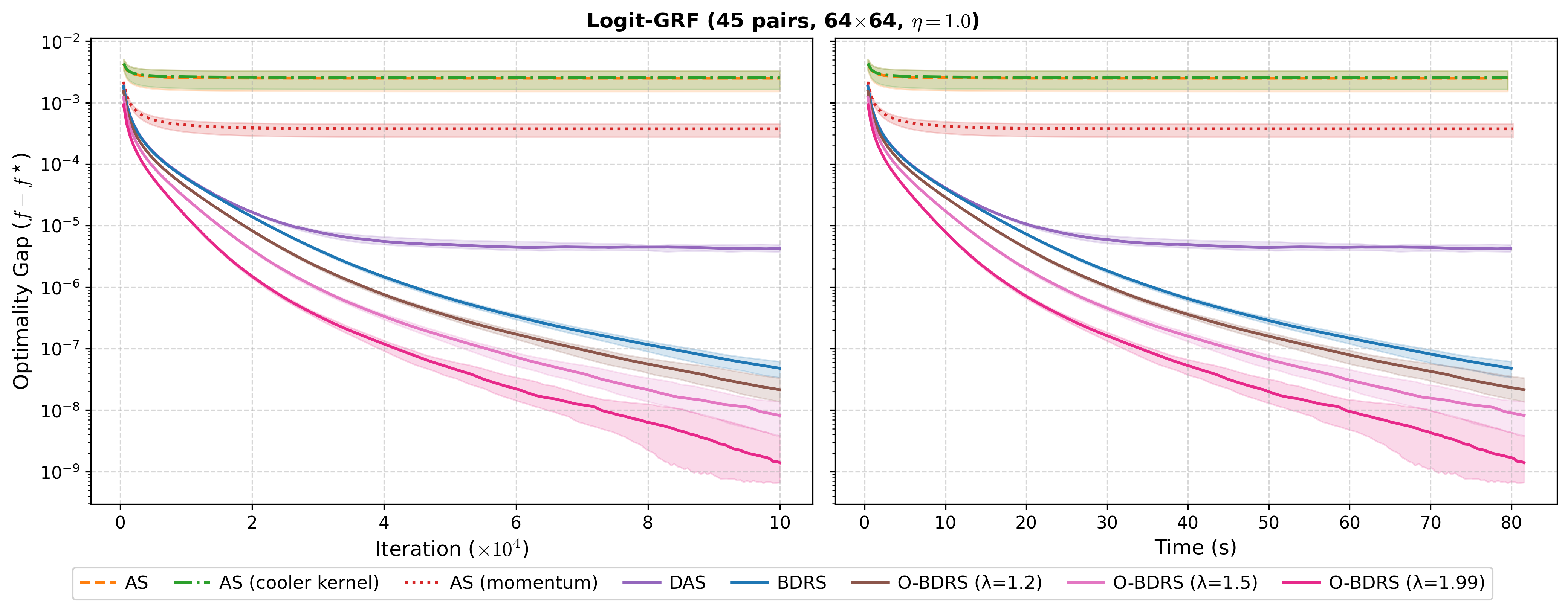}
    \par\medskip
    \includegraphics[width=\textwidth]{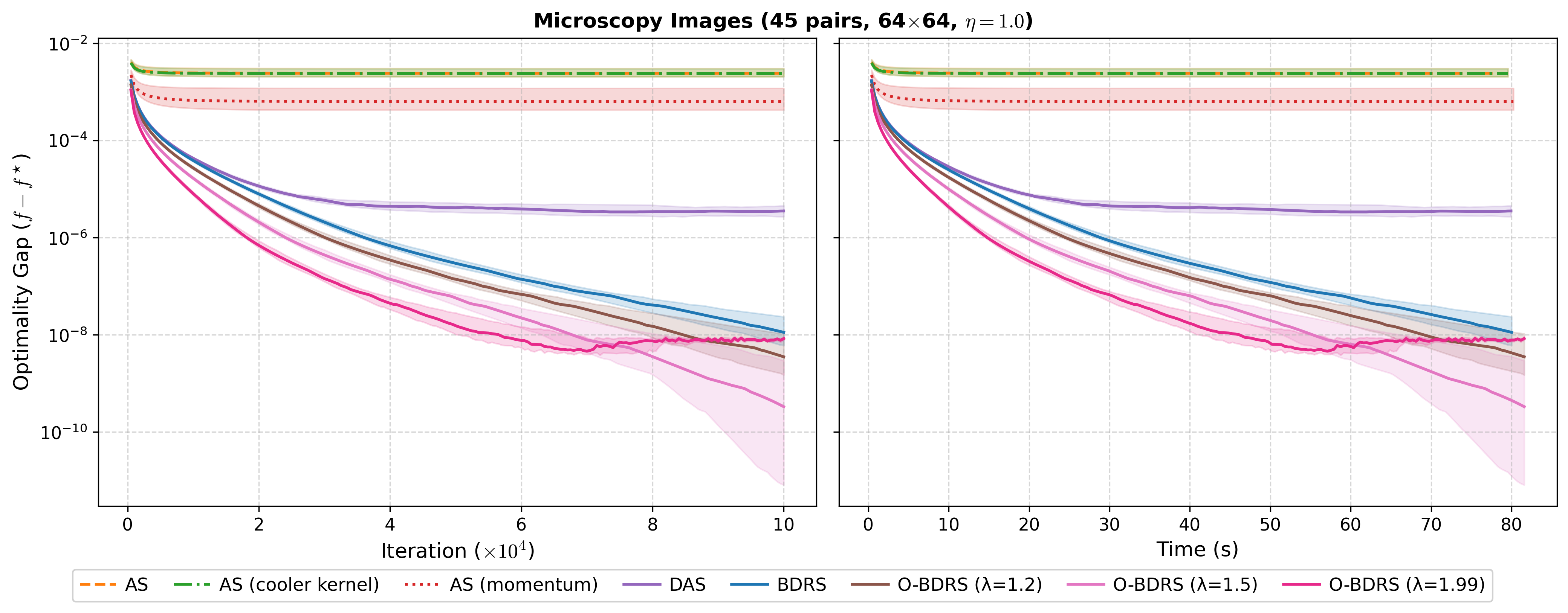}

    \caption{
        Momentum and kernel ablations on the Logit-GRF (top)
        and Microscopy Images (bottom) classes of DOTmark.
        Each class contains 45 image pairs at resolution
        $64\times64$, with $\eta=1$.
        The left panels show the transport cost difference
        $f-f^\star$ against iteration count, and the right panels
        show the same quantity against median solver time,
        excluding rounding.
        Here $f$ is the evaluated transport cost and $f^\star$
        is the reference value computed using POT's network
        simplex solver.
        Lines show medians across image pairs; shaded bands
        indicate the 25th--75th percentiles. 
        Here, we observe that O-\ac{BDRS}
        has a performance degradation on Microscopy Images as the number of iterations increase.
        We suspect that this late-iteration degradation reflects numerical sensitivity at low effective temperatures, potentially exacerbated by the $10^{-15}$ mass assigned to zero-mass pixels, which are common in Microscopy Images and Shapes, but not the other classes.
    }
    \label{fig:momentum-ablation-logit-microscopy}
\end{figure}

\begin{figure}[tbp]
    \centering
    \includegraphics[width=\textwidth]{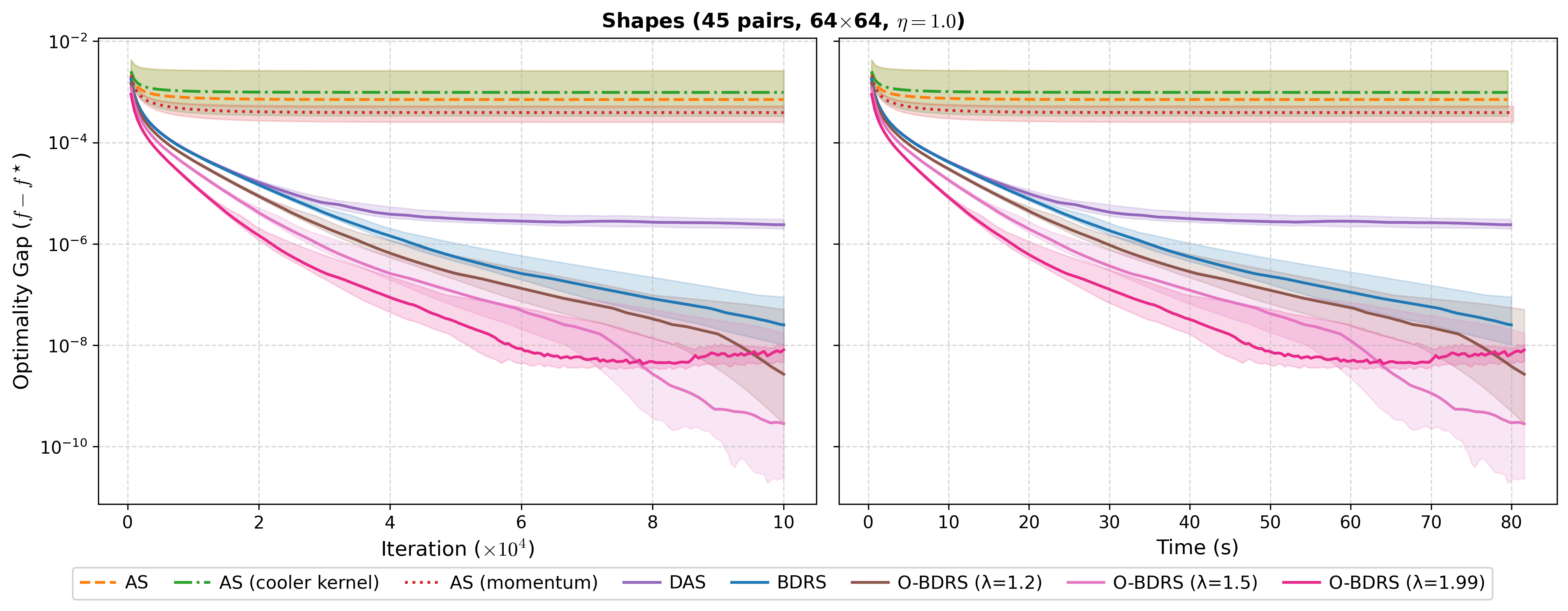}
    \par\medskip
    \includegraphics[width=\textwidth]{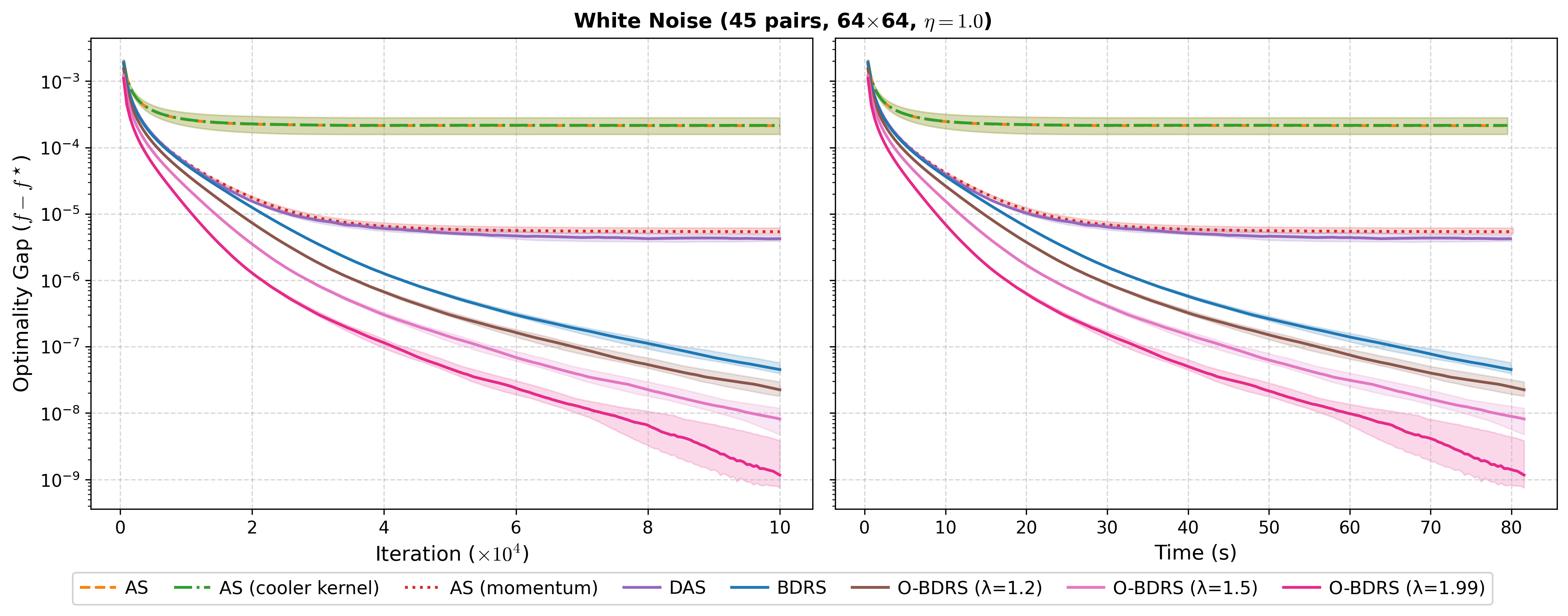}

    \caption{
        Momentum and kernel ablations on the Shapes (top)
        and White Noise (bottom) classes of DOTmark.
        Each class contains 45 image pairs at resolution
        $64\times64$, with $\eta=1$.
        The left panels show the transport cost difference
        $f-f^\star$ against iteration count, and the right panels
        show the same quantity against median solver time,
        excluding rounding.
        Here $f$ is the evaluated transport cost and $f^\star$
        is the reference value computed using POT's network
        simplex solver.
        Lines show medians across image pairs; shaded bands
        indicate the 25th--75th percentiles.
        Here, we observe that O-\ac{BDRS}
        has a performance degradation on Shapes as the number of iterations increase.
        We suspect that this late-iteration degradation reflects numerical sensitivity at low effective temperatures, potentially exacerbated by the $10^{-15}$ mass assigned to zero-mass pixels, which are common in Microscopy Images and Shapes, but not the other classes.
    }
    \label{fig:momentum-ablation-shapes-noise}
\end{figure}